\DeclareMathAlphabet{\mathpzc}{OT1}{pzc}{m}{it}
\def\bA{\ensuremath\mathbb{A}}
\def\bB{\ensuremath\mathbb{B}}
\def\bC{\ensuremath\mathbb{C}}
\def\bF{\ensuremath\mathbb{F}}
\def\bI{\ensuremath\mathbb{I}}
\def\bJ{\ensuremath\mathbb{J}}
\def\K{\ensuremath\mathbb{K}}
\def\N{\ensuremath\mathbb{N}}
\def\bX{\ensuremath\mathbb{X}}
\def\Z{\ensuremath\mathbb{Z}}
\def\x{\ensuremath\times}
\def\to{\ensuremath\rightarrow}
\def\sset{\ensuremath\subseteq}
\def\<{\ensuremath\langle}
\def\>{\ensuremath\rangle}
\DeclareMathOperator{\Id}{Id}
\DeclareMathOperator{\coeq}{coeq}
\DeclareMathOperator{\eq}{eq}
\DeclareMathOperator{\im}{im}
\DeclareMathOperator{\Hom}{Hom}
\DeclareMathOperator{\Cohom}{Cohom}
\DeclareMathOperator{\Cnt}{Const}
\DeclareMathOperator{\SC}{SC}
\newtheorem{result}{Theorem}
\newtheorem{assum}{Assumption}
\newtheorem{thm}{Theorem}[section]
\newtheorem{lemma}[thm]{Lemma}
\newtheorem{prop}[thm]{Proposition}
\newtheorem{cor}[thm]{Corollary}
\theoremstyle{definition}
\newtheorem{defn}[thm]{Definition}
\newtheorem{conj}[thm]{Conjecture}
\newtheorem{prob}[thm]{Problem}
\newtheorem{question}[thm]{Question}
\newcommand{\vi}{\ensuremath{\mathbf{i}}\xspace}
\newcommand{\vk}{\ensuremath{\mathbf{k}}\xspace}
\newcommand{\vu}{u}
\newcommand{\vv}{v}
\newcommand{\vw}{w}
\newcommand{\vx}{x}
\newcommand{\vy}{y}
\newcommand{\vz}{z}
\newcommand{\sA}{\mathcal{A}}
\newcommand{\sB}{\mathcal{B}}
\newcommand{\sC}{\mathcal{C}}
\newcommand{\sD}{\mathcal{D}}
\newcommand{\sJ}{\mathcal{J}}
\newcommand{\sK}{\mathcal{K}}
\newcommand{\sM}{\mathcal{M}}
\newcommand{\sN}{\mathcal{N}}
\newcommand{\sS}{\mathcal{S}}
\newcommand{\ssS}{\mathscr{S}}
\newcommand{\sT}{\mathcal{T}}
\newcommand{\sW}{\mathcal{W}}
\newcommand{\sInd}{\mathcal{I}\!\mathpzc{n}\mathpzc{d}}
\newcommand{\sCoi}{\mathcal{C}\!\mathpzc{o}\mathpzc{i}\mathpzc{n}\mathpzc{d}}
\newcommand{\sRes}{\mathcal{R}\!\mathpzc{e}\mathpzc{s}}
\newcommand{\sSets}{\mathcal{S}\!\mathpzc{e}\mathpzc{t}\mathpzc{s}}
\newcommand{\KCh}{{\mathbb K}{\mathrm{-}}{\mathcal{C}}\!\mathpzc{h}}
\newcommand{\Fib}{{\mathbb F}}
\newcommand{\Cof}{{\mathbb C}}
\newcommand{\WEq}{{\mathbb W}}
\newcommand{\Cyl}{{\mathrm C}{\mathrm y}{\mathrm l}}
\newcommand{\clm}{{\varinjlim}}
\newcommand{\HO}{{\mathrm H}{\mathrm o}}
\newcommand{\Map}{{\mathcal M}{\mathrm a}{\mathrm p}}
\newcommand{\unit}{I} 
\newcommand{\chf}{C}
\newcommand{\LB}{{\mathrm L}{\mathrm B}{\mathrm L}}
\newcommand{\RB}{{\mathrm R}{\mathrm B}{\mathrm L}}
\begin{document}
\title[Co-Contra Correspondence]{General Comodule-Contramodule Correspondence\footnote{In memory of Sasha Ananin}}
\author{Katerina Hristova}
\email{katherina.hristova@gmail.com}
\address{School of Mathematics, University of East Anglia, Norwich, NR4 7TJ, UK}
  \author{John Jones}
\email{jdsjones200@gmail.com}
\address{Department of Mathematics, University of Warwick, Coventry, CV4 7AL, UK}
    \author{Dmitriy Rumynin}
\email{D.Rumynin@warwick.ac.uk}
\address{Department of Mathematics, University of Warwick, Coventry, CV4 7AL, UK}

\date{7 November 2022}
\subjclass[2010]{Primary 18D20, Secondary 18N40, 16T15, 55U40}
\keywords{symmetric monoidal category, comodule,  comonad,  contramodule,  model category, monad, Quillen adjunction}
\thanks{We would like to thank Tomasz Brzezi\'{n}ski, Emanuele Dotto, Kathryn Hess, Leonid Positselski and Robert Wisbauer for their interest in our work and sharing helpful information.}

\begin{abstract}
  This paper is a fundamental study of comodules and contramodules over a comonoid in a symmetric closed monoidal category. We study both algebraic and homotopical aspects of them. Algebraically, we enrich the comodule and contramodule categories over the original category, construct enriched functors between them and enriched adjunctions between the functors.
Homotopically, for simplicial sets and topological spaces, we investigate the categories of comodules and contramodules and the relations between them.
\end{abstract}

\maketitle


\section{Introduction}
\subsection{Comodules and contramodules}
Let $\K$ be a field, $C$ a coalgebra over $\K$.  Let $\Delta : C \to C \otimes C$ be the comultiplication, $\varepsilon : C \to \K$ the counit.  A right comodule over $C$ is a vector space $X$ over $\K$ equipped with a structure map $\rho : X \to X \otimes C$.  This structure map must satisfy the natural coassociativity and counitality conditions.
Coassociativity is the requirement that the diagram 
$$
\begin{CD}
 X @>{\rho}>> X \otimes C \\
 @V{\rho}VV  @VV{\rho \, \otimes 1} V\\
 X \otimes C @>{1 \otimes \, \Delta}>> X \otimes C \otimes C\\
 \end{CD}
 $$
commutes and counitality is the requirement that the composite
$$
\begin{CD} 
X @>{\rho}>>X \otimes C @>{1\otimes \, \varepsilon}>>  X \otimes \K @>{\cong}>> X
\end{CD}
$$
is the identity.

At the end of their paper \cite{EiM2} Eilenberg and Moore point out that there is another natural kind of ``module'' over $C$ which they call a {\em contramodule}.  A contramodule over $C$ is a vector space $Y$ equipped with a structure map $\theta : \Hom_{\K}(C, Y) \to Y$ satisfying the following contraassociativity and contraunitality conditions.  Contraassociativity is the requirement that the diagram 
$$
\begin{CD}
 \Hom_{\K}(C, \Hom_{\K}(C,Y)) @>{\theta_*}>> \Hom_{\K}(C,Y) @>{\theta}>> Y \\
 @V{=}VV                                                     @.                                 @VV{=}V \\
 \Hom_{\K}(C \otimes C, Y) @>{\Delta^*}>> \Hom_{\K}(C,Y)@>{\theta}>> Y
 \end{CD}
$$
commutes and contraunitality is the requirement that the composite
$$
\begin{CD} 
Y = \Hom_{\K}(\K,Y) @>{\varepsilon^*}>>  \Hom_{\K}(C, Y) @>{\theta}>> Y
\end{CD}
$$
is the identity.

Since $C^*\otimes Y$ is naturally a subspace of $\Hom_{\K}(C,Y)$, the structure map $\theta$ makes $Y$ a module over the algebra $C^*$.
Similarly, a comodule $X$ gets a $C^\ast$-module structure from the composition
$$
\begin{CD} 
X \otimes C^\ast @>{\rho\otimes 1}>>  X \otimes C \otimes C^\ast @>{1\otimes ev}>>  X\otimes \K = X\, .
\end{CD}$$
If $C$ is finite dimensional, then $\Hom_{\K}(C,Y) = C^*\otimes Y$ and there is no difference between comodules, $C^\ast$-modules and contramodules.
This is the simplest example of the comodule-contramodule correspondence.

The theory of contramodules over a coalgebra was completely neglected until the early 2000's when Positselski, motivated by his work on the theory of semi-infinite cohomology in the geometric Langlands program, took up the study of contramodules.  Much of his work was published in 2010 \cite{Pos2}. Positselski showed how to set up the co-contra correspondence for coalgebras without any finite dimensionality hypotheses.   This correspondence is not a one-to-one correspondence, but Positselski was able to prove that it defines an equivalence between appropriate derived categories of comodules and contramodules.  In \cite{Bohm} Bohm, Brzezi\'{n}ski, and Wisbauer gave a clean account of the theory of comodules and contramodules in categories of modules in terms of monads, comonads, and adjoint functors.  This account fits in well with the original work of Eilenberg and Moore.

The starting point for our work is that the ingredients for the comodule-contramodule correspondence are present in many interesting examples which are not module categories.  The simplest (and perhaps the most fundamental of all examples) is the category of sets.  Other examples, which we study in some detail in this paper, include the category of chain complexes, simplicial sets and topological spaces.
Further examples, which we do not study, include the categories of spectra,  $G$-sets where $G$ is a discrete group, and $G$-spaces where $G$ is a topological group.   

Let $\sC$ be a closed symmetric monoidal category.  Such a category $\sC$ comes equipped with a symmetric tensor product bifunctor $\otimes : \sC \times \sC  \to \sC$
satisfying the usual associativity and unitality conditions
and an internal hom bifunctor $[\, , \, ] : \sC^{op} \otimes \sC  \to \sC$
such that 
for each pair of objects $A, B\in \sC$ there is a  natural isomorphism of functors of $X$ 
$$
\sC(X \otimes A, B) \to \sC(X, [A,B])).
$$
Here $\sC(A,B)$ denotes the set of morphisms in $\sC$ with source $A$ and target $B$, while $[A,B]$ denotes the {\em internal hom object}.
Let $\unit$ be the monoidal unit in $\sC$.
The internal hom object determines the hom set:
$$
\sC(\unit, [A,B]) = \sC(A,B).
$$

The diagrams, which define a coalgebra and its comodules in the category of vector spaces, use only the tensor functor $\otimes$.   So they make sense in $\sC$ and define the notion of a comonoid $C$ in $\sC$ and its comodules.
Since $\sC$ is closed, contramodules over $C$ are defined by using the formal analogues of the diagrams in the category of vector spaces which define contramodules with $\Hom_{\K}(C, Y)$ replaced by the internal hom object $[C,Y]$.
Therefore, we have comonoids, comodules and contramodules in any closed symmetric monoidal category. The aim of this paper is to develop the co-contra correspondence in this general context.

\subsection{The co-contra correspondence}  
Let $C$ be a comonoid in $\sC$.
Now we consider the categories of comodules and contramodules over $C$.
The definition of a morphism of comodules (or contramodules) over $C$ is a morphism (in $\sC$) of their underlying objects such that the obvious diagrams commute.
This defines two perfectly good categories: the category $\sC_C$ of comodules over $C$ and the category $\sC^C$ of contramodules over $C$.
If we are working with vector spaces over a field, these sets of morphisms have a natural vector space structure and all is well.
But in $\sC$ we can only get sets of morphisms in this way.
What we really want is to give the categories $\sC_C$ and $\sC^C$ the structure of categories enriched in $\sC$.
The theory of enriched categories is quite subtle; the standard reference is \cite{Kelly}.

In other words, this means that for any two comodules $X, X'$ over $C$ we must construct a hom object $[X,X']_C\in \sC$, and for any two contramodules $Y,Y'$ over $C$ we must construct a hom object $[Y,Y']^C \in \sC$.
The obvious idea is to define $[X,X']_C$ as the equaliser of two morphisms in $\sC$ from $[X,X'] \to [X,X'\otimes C]$.
This equaliser must exist: the categories $\sC_C$ and $\sC^C$ have the  structure of enriched categories over $\sC$
if $\sC$ satisfies the following completeness property (see Proposition~\ref{C_TF enriched}):
\begin{assum} \label{assum_1}
Each pair of morphisms $X \rightrightarrows Y$
with a common left inverse admits an equaliser.
\end{assum}

We will also need the dual assumption:
\begin{assum} \label{assum_2}
Each pair of morphisms $X \rightrightarrows Y$
with a common right inverse admits a  coequaliser.
\end{assum}

Our first objective is to establish the next theorem, 
the general comodule-contramodule correspondence.

\begin{result} {\em (Theorem~\ref{adjoint_LR})}
  Suppose a closed symmetric monoidal category $\sC$ satisfies Assumptions~\ref{assum_1} and \ref{assum_2}.
  Then there is an enriched adjoint pair 
  of enriched  functors
$$(L \dashv R), \ \ \ L : \sC^C \rightleftarrows \sC_C : R \, .$$
\end{result}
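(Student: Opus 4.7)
The plan is to build $L$ and $R$ from the closed-monoidal adjunction $-\otimes C \dashv [C,-]$ in $\sC$ together with universal constructions, establish the hom-set bijection via the corresponding universal properties, and then upgrade to an enriched adjunction using the internal hom objects constructed in Propositions~\ref{C_T enriched} and \ref{C^F enriched}.

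For the right adjoint, given $(X,\rho)\in\sC_C$, I would let $R(X)$ be the equalizer in $\sC$ of two maps $[C,X] \rightrightarrows [C, X\otimes C]$: namely $[C,\rho]$, and the map corresponding under $-\otimes C\dashv [C,-]$ to $(f\otimes 1)\circ\Delta\colon C\to X\otimes C$. The cofree contramodule structure $[\Delta, X]\colon [C,[C,X]]\cong [C\otimes C, X]\to [C,X]$ restricts to this equalizer, by coassociativity of $\Delta$ together with the universal property, equipping $R(X)$ with a contramodule structure. Dually, for $(Y,\theta)\in\sC^C$, I would let $L(Y)$ be the coequalizer in $\sC$ of the two maps $C\otimes[C,Y]\rightrightarrows C\otimes Y$ given by $1\otimes\theta$ and $(1\otimes\ev)\circ(\Delta\otimes 1)$. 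The cofree comodule structure $\Delta\otimes 1$ on $C\otimes Y$ descends to make $L(Y)\in\sC_C$. Completeness and cocompleteness of $\sC$ are used precisely to guarantee these (co)equalizers exist, and functoriality of $L$ and $R$ on morphisms follows from the universal properties.

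To prove the hom-set adjunction $\sC_C(L(Y),X)\cong \sC^C(Y,R(X))$, I would observe that, by the universal property of the coequalizer, a comodule morphism $f\colon L(Y)\to X$ is the same data as a morphism $\bar f\colon C\otimes Y\to X$ in $\sC$ that both (i) coequalizes the defining pair and (ii) is a map of $C$-comodules with respect to the comodule structure on $C\otimes Y$ induced by $\Delta$. Transposing $\bar f$ under $-\otimes C\dashv [C,-]$ to $\tilde f\colon Y\to [C,X]$, a diagram chase shows that (i) becomes the condition that $\tilde f$ is a morphism of contramodules (with $[C,X]$ carrying the cofree structure), while (ii) becomes the condition that $\tilde f$ factors through the subobject $R(X)\subseteq [C,X]$. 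Naturality in $X$ and $Y$ is manifest, giving the bijection.

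The enrichment step repeats the same argument with internal hom objects $[X,X']_C$ and $[Y,Y']^C$ in place of hom sets; by Propositions~\ref{C_T enriched} and~\ref{C^F enriched} these are themselves equalizers in $\sC$. Using the natural isomorphism $[A\otimes C,B]\cong [A,[C,B]]$ supplied by the closed structure, the equalizer manipulations carry over to produce a natural isomorphism $[L(Y),X]_C\cong [Y,R(X)]^C$ in $\sC$, and compatibility with composition follows by unwinding definitions. The main obstacle throughout is the diagram chase in the third step: one must simultaneously translate the coequalizer condition on $\bar f$, which mixes $\Delta$, $\ev$, and $\theta$, and the comodule-compatibility condition involving $\rho$, across the tensor-hom adjunction, and match them precisely against the definition of $R(X)$ and of a contramodule morphism. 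Once that bookkeeping is completed, the adjunction and its enrichment follow formally from the universal properties of (co)equalizers and from closedness of $\sC$.
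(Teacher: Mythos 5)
Your proposal is correct and follows essentially the same route as the paper: you define $R(X)$ as the equaliser $[C,X]_C$ of the two canonical maps $[C,X]\rightrightarrows[C,X\otimes C]$ and $L(Y)$ as the coequaliser of the pair $C\otimes[C,Y]\rightrightarrows C\otimes Y$, exactly as in Theorems~\ref{thm_R} and~\ref{thm_L}, and you derive the adjunction from the internal tensor--hom adjunction $[V\otimes C,W]\cong[V,[C,W]]$ together with the universal properties of the defining (co)equalisers. The only presentational difference is that you establish the hom-set bijection first and then enrich, whereas the paper's proof of Theorem~\ref{adjoint_LR} works directly with the enriched hom objects, identifying $[L(Y),X]_C$ and $[Y,R(X)]^C$ as the common pullback in a pair of cartesian squares built from the isomorphisms $[Y\otimes C,X]_C\cong[Y,R(X)]$ and $[L(Y),X]\cong[Y,[C,X]]^C$; the underlying diagram chases are the same.
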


We devote Chapter~\ref{s1} to describing the construction of $L \dashv R$ but we postpone the proofs to Chapter~\ref{s_appendix}.
In Chapter~\ref{s2e} we explain how the construction in Chapter~\ref{s1} works in three examples: the category of chain complexes over a field, the category of sets, and the category of simplicial sets.  

Let us discuss this result in $\sSets$, the category of sets.
The monoidal product is the cartesian product of sets and the internal hom set is the set of functions.
It is easy to see that any set $C$ has a unique structure of a comonoid.  The comultiplication is the diagonal map. A simple argument with unitality (cf. Section~\ref{comodules_sets}) shows that the diagonal map is, indeed, the unique comonoid structure on $C$.  

If $X$ is a set, then a bijection 
$$
\coprod_{c \,\in C} U_c \to X
$$
defines the structure of a comodule over $C$ on $X$ (cf. Section~\ref{comodules_sets}).  If $Y$ is a set, then a bijection
$$
Y \to \prod _{c\, \in C} V_c
$$
defines the structure of a contramodule over $C$ on $Y$ (cf. Section~\ref{RX}).

\begin{result} {\em (Theorems~\ref{any_contraset} and \ref{set_CCC})}
Let $C$ be a set considered as a comonoid in the category of sets.
\begin{enumerate}
\item
Every comodule over $C$ is isomorphic to $\coprod_{c \in C} U_c$, where $U_c$ is a collection of sets parametrised by $C$.
\item
Every contramodule over $C$ is isomorphic to $\prod_{c \in C} V_c$, where $V_c$ is a collection of sets parametrised by $C$.
\item
The functors $L : \sSets^C \to \sSets_C$ and  $R : \sSets_C \to \sSets^C$ are characterised by
$$
L\left(\prod_{c \in C} V_c\right) = \coprod_{c \in C} V_c, \quad R : \left(\coprod_{c \in C} U_c\right) = \prod_{c \in C} U_c
$$
where all sets $V_c$ must be non-empty.
\end{enumerate}
\end{result}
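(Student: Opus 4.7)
The plan is as follows. For part~(1), I write $\rho(x) = (\alpha(x), \pi(x))$ for functions $\alpha : X \to X$ and $\pi : X \to C$; the counitality axiom $(\Id_X \times \epsilon) \circ \rho = \Id_X$ forces $\alpha = \Id_X$, after which coassociativity becomes the tautology $(x, \pi(x), \pi(x)) = (x, \pi(x), \pi(x))$. Thus a comodule structure on $X$ is nothing more than a function $\pi : X \to C$, and setting $U_c := \pi^{-1}(c)$ yields $X = \coprod_{c \in C} U_c$.

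For part~(2), I introduce, for each $c_0 \in C$, the equivalence relation $\sim_{c_0}$ on $Y$: $y \sim_{c_0} y'$ iff $\theta(f) = \theta(f')$ for every pair $f, f' : C \to Y$ that agree off $c_0$ with $f(c_0) = y$ and $f'(c_0) = y'$. Let $V_{c_0} := Y / \sim_{c_0}$, with quotient $p_{c_0} : Y \to V_{c_0}$. The claim is that $p := (p_c)_{c \in C} : Y \to \prod_c V_c$ is a bijection, and that under $p$ the structure $\theta$ becomes the ``diagonal'' one: $\theta(f)_c = f(c)_c$. For injectivity, suppose $p(y) = p(y')$ and apply the contraassociativity square to the family $G \in (Y^C)^C$ with $G(c)(c') = y$ for $c' \neq c$ and $G(c)(c) = y'$. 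Pairing $\hat y$ against $G(c)$ and using $y \sim_c y'$ gives $\theta(G(c)) = y$ for every $c$, so one leg of the square evaluates to $\theta(\hat y) = y$, while the other is $\theta(c \mapsto G(c)(c)) = \theta(\hat{y'}) = y'$; hence $y = y'$.

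Surjectivity of $p$ and the diagonality of $\theta$ both rest on the key lemma: $\theta(f) \sim_{c_0} f(c_0)$ for every $f : C \to Y$ and every $c_0 \in C$. To prove it, pick any witnesses $g, g'$ (agreeing off $c_0$, with $g(c_0) = \theta(f)$ and $g'(c_0) = f(c_0)$) and feed into the contraassociativity square the family $G \in (Y^C)^C$ defined by $G(c_0) := f$ and $G(c) := \hat{g(c)} = \hat{g'(c)}$ for $c \neq c_0$; a short calculation shows $\theta \circ G = g$ and $c \mapsto G(c)(c) = g'$, forcing $\theta(g) = \theta(g')$. Given the lemma, surjectivity of $p$ follows by lifting $(v_c)_c$ to $y := \theta(f)$ for any choice of representatives $f(c) \in v_c$, and $p_c(\theta(f)) = p_c(f(c))$ is exactly the diagonal formula. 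The main obstacle is this lemma: picking the right auxiliary family $G$ and matching both composites of the contraassociativity square.

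For part~(3), the general construction of $R$ from Chapter~\ref{s1} identifies $R(X)$ with the enriched hom $[C, X]_C$. In $\sSets$, using part~(1), this equalises down to the set of sections of the fibration $\pi : X \to C$, namely $\prod_c U_c$. For $L$, adjointness and Yoneda do the rest: on one side, $\sSets_C(\coprod_c V_c, X)$ splits along the fibres of $\pi$ as $\prod_c \sSets(V_c, U_c)$, and on the other side the componentwise description of contramodule morphisms (a direct consequence of part~(2) together with the diagonality of $\theta$) identifies $\sSets^C(\prod_c V_c, \prod_c U_c)$ with the same product. Naturality in $X$ forces $L(\prod_c V_c) = \coprod_c V_c$, and the non-emptiness of the $V_c$ ensures that the family is recovered canonically as $V_c = Y / \sim_c$, so that the coproduct on the right is well-defined.
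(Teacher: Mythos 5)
Your argument is correct, and while part (1) and the description of $R$ coincide with the paper's, your treatment of parts (2) and (3) takes a genuinely different route. For part (2) the paper fixes a base point $u\in X$, forms the idempotents $\pi_a(x)=\theta(\delta_{a,x})$ (where $\delta_{a,x}$ equals $x$ at $a$ and $u$ elsewhere), and exhibits $X\cong\prod_a\im(\pi_a)$ with inverse given by restricting $\theta$; you instead form the canonical quotients $V_c=Y/\!\sim_c$ and prove the key lemma $\theta(f)\sim_{c_0}f(c_0)$. Both arguments ultimately feed carefully chosen $C\times C$ matrices into the row--diagonal identity (your families $G$ are exactly such matrices), but yours is base-point-free, so the components $V_c$ are canonically attached to $Y$ rather than realised as subsets depending on a choice of $u$ --- a small but genuine gain, which you then exploit in part (3). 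For part (3) the paper computes the coequaliser defining $L$ explicitly (Lemma~\ref{LR_description}): it identifies the equivalence relation generated on $[C,X]_C\times C$ and shows that evaluation induces $LR(X)\cong X$, then transports this along part (2) to arbitrary non-empty contramodules. You bypass the coequaliser entirely via the adjunction $(L\dashv R)$ of Theorem~\ref{adjoint_LR} and the Yoneda lemma, at the cost of needing the componentwise description $\sSets^C(\prod_c V_c,\prod_c U_c)\cong\prod_c\sSets(V_c,U_c)$ for non-empty $V_c$; you assert this rather than prove it, but it is a short computation with the diagonal structure map (given $v,v'$ with $v_c=v'_c$, apply a contramodule morphism to $\theta(\beta)$ where $\beta$ sends $c$ to $v$ and every other index to $v'$), so I regard it as a gap in exposition, not in substance. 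The paper's route produces the explicit form of the counit $LR(X)\to X$, which your Yoneda argument does not; yours is shorter and isolates precisely where the non-emptiness of the $V_c$ is used.
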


\subsection{Homotopy theory in categories of comodules and contramodules}
The motivation for introducing homotopy theory is the main theorem of Positselski.
It states that in the algebraic context of comodules and contramodules  over a DG-coalgebra the co-contra correspondence defines an equivalence between the coderived category of comodules and the contraderived category of contramodules.  It is natural to think about this theorem in terms of Quillen's model categories.

A model category is a category $\sM$ together with three distinguished classes of morphisms: {\em cofibrations}, {\em fibrations} and {\em weak equivalences}, satisfying appropriate axioms.
If $\sM$, $\sN$ are model categories, a Quillen adjunction between them is a pair of adjoint functors
$$(A \dashv B), \ \ \ A : \sN \rightleftarrows \sM : B\, ,$$
satisfying certain axioms. Further axioms turn a Quillen adjunction into a Quillen equivalence.

A {\em symmetric monoidal model category} is the natural notion of a category with a compatible symmetric monoidal structure and model structure (\cite{Hir,Hov} for further details).
Let $\sC$ be a symmetric monoidal model category and let $C$ be a comonoid in $\sC$.
Form the categories $\sC_{C}$ and $\sC^{C}$.
There are forgetful functors $G_C:\sC_{C} \to \sC$ and $G^C: \sC^{C} \to \sC$.
Under suitable conditions, we can use these functors to define induced model structures on $\sC_{C}$ and $\sC^{C}$.
With more conditions we can show that the functors $L : \sC^C \rightleftarrows \sC_C : R$ define a Quillen adjunction.
With yet more conditions we can adjust the model structures of $\sC_C$ and $\sC^C$, by a technique known as Bousfield localisation, so that with the new model structures the functors
$L : \sC^C \rightleftarrows \sC_C : R$ become a Quillen equivalence.  This leads to the following two results.

\begin{result}  {\em (Proposition~\ref{Quillen_adjunction_cartesian_cat})}
  Suppose that the closed symmetric monoidal model category $\sC$ is cartesian closed.
  If the left-induced model structure exists on $\sC_C$ and the right-induced model structure exists on $\sC^C$,   
then the pair $({L}\dashv R)$ is a Quillen adjunction.  
\end{result}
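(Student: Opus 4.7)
My plan is to establish the Quillen adjunction by verifying that the right adjoint $R:\sC_C\to\sC^C$ sends (trivial) fibrations to (trivial) fibrations. By the defining property of the right-induced model structure on $\sC^C$, a morphism there is a (trivial) fibration precisely when its image under the forgetful functor $G^C:\sC^C\to\sC$ is, so the problem reduces to showing that the composite $G^C\circ R:\sC_C\to\sC$ carries (trivial) fibrations in the left-induced model structure on $\sC_C$ to (trivial) fibrations in $\sC$.

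The next step is to identify $G^C\circ R$ explicitly using the cartesian closed hypothesis. In a cartesian closed category every object $C$ carries a unique comonoid structure (the diagonal and the unique map to the terminal object), and counitality forces any comodule structure $\rho:X\to X\times C$ to be of the form $\rho=(\mathrm{id}_X,q)$ for a unique $q:X\to C$. Hence $\sC_C$ is equivalent to the slice category $\sC/C$, and the left-induced model structure coincides with the usual slice model structure in which cofibrations, weak equivalences, and fibrations are all detected by the underlying morphism in $\sC$. Unfolding the enriched hom $[C,X]_C$ with $C$ viewed as a comodule over itself via $\Delta$ exhibits $R(X\to C)$ as the object of sections $\Gamma(X/C)$, realized concretely as the pullback
$$
\Gamma(X/C)\;\cong\;[C,X]\times_{[C,C]}\unit,
$$
where $\unit\to[C,C]$ picks out $\mathrm{id}_C$; forgetting the contramodule structure then yields $G^C\circ R=\Gamma$.

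For a (trivial) fibration $f:X\to X'$ in $\sC_C$, the induced map $\Gamma(X/C)\to\Gamma(X'/C)$ is the pullback of $[C,f]:[C,X]\to[C,X']$ along the canonical map $\Gamma(X'/C)\to[C,X']$; since (trivial) fibrations are stable under pullback, the proposition reduces to the assertion that $[C,-]$ preserves (trivial) fibrations in $\sC$. This is precisely the statement that the exponential adjunction $(-\times C)\dashv[C,-]$ is a Quillen adjunction on $\sC$, which follows from the pushout-product axiom of the symmetric monoidal model structure. The main obstacle is this final reduction: it goes through cleanly when $C$ is cofibrant (e.g.\ in simplicial sets, where every object is cofibrant), but in the generality of an arbitrary cartesian closed symmetric monoidal model category one must identify the extra hypothesis, likely packaged into the paper's conventions for ``cartesian closed symmetric monoidal model category'' or supplied by the existence of the left- and right-induced model structures themselves, that guarantees $[C,-]$ is right Quillen.
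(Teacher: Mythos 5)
Your argument is essentially the paper's, repackaged. The paper proves the same reduction by hand: it takes a lifting problem for $Rf$ against a (trivial) cofibration $U\to V$, passes to the adjoint lifting problem for $f$ against $U\times C\to V\times C$ (using that in the induced slice model structure on $\sC_C\cong(\sC\!\downarrow\! C)$ the fibrations are also detected in $\sC$), and then checks that the adjunct lift factors through the equaliser $RX\hookrightarrow [C,X]$. Your identification of $RX$ as the pullback $[C,X]\times_{[C,C]}\unit$ and of $Rf$ as a pullback of $[C,f]$ packages the ``factors through the equaliser'' step automatically, and your reduction to ``$[C,-]$ is right Quillen'' is literally the adjoint form of the paper's assertion that $U\times C\to V\times C$ is again a (trivial) cofibration. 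So the two proofs hinge on exactly the same key fact.

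The step you flag as the remaining obstacle is precisely the step the paper asserts without comment: in the displayed adjunct square the authors simply declare that the left vertical arrow $U\times C\to V\times C$ is a (trivial) cofibration in $\sC$. As you note, this follows from the pushout-product axiom applied to $U\to V$ and $\emptyset\to C$ (using that $\emptyset\times C\cong\emptyset$ in a cartesian closed category, since $-\times C$ is a left adjoint), but only when $\emptyset\to C$ is a cofibration, i.e.\ when $C$ is cofibrant. In simplicial sets this is automatic; in general (e.g.\ in $\sW$, to which the paper later applies this proposition) it is a genuine extra hypothesis. So your honest flag does not put you behind the paper's proof --- you have reached the same point and correctly identified the assumption it leaves implicit.
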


\begin{result} \label{res4}  {\em (Theorem~\ref{Quillen_equivalence_cartesian_cat})}
Suppose that $\sC$ satisfies the following assumptions.
\begin{enumerate}
\item
$\sC$ is a locally presentable category,
\item
$\sC$ is a  cartesian closed symmetric monoidal model category, 
\item
$\sC$ is a left and right proper model category.
\end{enumerate}
Let $C$ be 
a comonoid in $\sC$.  Then there exist a left Bousfield localisation $\LB(\sC^C)$ and
a right Bousfield localisation $\RB(\sC_C)$ such that the functors
$$L : \LB(\sC^C) \rightleftarrows \RB(\sC_{C}) : R$$
form a Quillen equivalence.
\end{result}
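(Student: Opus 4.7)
The strategy has three phases: establish induced model structures on $\sC_C$ and $\sC^C$, construct the Bousfield localisations, and verify the Quillen equivalence. Local presentability of $\sC$ together with the presentability of $C$ ensures that both $\sC_C$ and $\sC^C$ are locally presentable---$\sC_C$ as coalgebras for the accessible comonad $-\otimes C$, and $\sC^C$ as algebras for the accessible monad $[C,-]$. The right-induced model structure on $\sC^C$ then exists by a standard Quillen small-object argument along the free/forgetful adjunction, and the left-induced structure on $\sC_C$ exists by the Makkai--Rosicky / Hess--Kedziorek--Riehl--Shipley existence theorem for combinatorial left-induced model structures. Left and right properness transfer through the forgetful functors, and the preceding proposition supplies the Quillen adjunction $L \dashv R$ on these structures.

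For the localisations, fix a set $\sG$ of cofibrant generators of $\sC^C$ (available by combinatoriality) and a fibrant replacement functor $Q$ in $\sC_C$; define
$$
S \;=\; \{\,\eta_X : X \to R Q L X \;\mid\; X \in \sG\,\}, \qquad K \;=\; \{\,L X \;\mid\; X \in \sG\,\}.
$$
Hirschhorn's existence theorem, applied to the left proper combinatorial category $\sC^C$, produces the left Bousfield localisation $\LB(\sC^C) := (\sC^C)_S$; dually, right properness together with the cellularity of the objects in $K$---which follows from the presentability of $C$---produces the right Bousfield localisation $\RB(\sC_C)$ at $K$. In both cases the cofibrations (respectively fibrations) are unchanged, while the class of weak equivalences is enlarged in the prescribed way.

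It remains to promote $L \dashv R$ to a Quillen equivalence on these localised structures. Since $L$ still preserves cofibrations and, by the choice of $S$, sends the new acyclic cofibrations to weak equivalences in $\RB(\sC_C)$, it remains left Quillen. For the equivalence one verifies that the derived unit $X \to RLX$ is a weak equivalence in $\LB(\sC^C)$ for every cofibrant $X$: this holds on $\sG$ by definition of $S$, and propagates to all cofibrant objects via a cellular induction over pushouts along generating cofibrations and transfinite compositions, after which the derived counit follows by the standard adjunction argument on localised model categories. The main obstacle is exactly this cellular induction: showing that the class of $X$ for which $X \to RLX$ is a weak equivalence is closed under the colimits used to assemble an arbitrary cofibrant object. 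This is where the hypotheses enter in full force---the presentability of $C$ makes $[C,-]$, and hence $R$, commute with sufficiently filtered colimits, while left and right properness stabilise weak equivalences under the relevant pushouts, allowing the induction to go through.
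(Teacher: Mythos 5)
Your overall skeleton --- locally presentable induced model structures, localisation at unit/counit data indexed by a set of generators, existence of the localisations via combinatoriality and properness --- is the same as the paper's. But there is a genuine gap at the step where you keep the adjunction Quillen after localising. You place the \emph{left} Bousfield localisation on the contramodule side $\sC^C$ and the \emph{right} Bousfield localisation (a colocalisation at the set of objects $K=\{LX\}$) on the comodule side $\sC_C$. The paper's proof does the opposite: it forms a \emph{right} Bousfield localisation $\RB_{\bA}(\sC^F)$ of the contramodules at a class $\bA$ of maps extracted from factorisations of the derived unit, and a \emph{left} Bousfield localisation $\LB_{\bB}(\sC_T)$ of the comodules at a class $\bB$ extracted from the derived counit. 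That arrangement is what makes the adjunction survive for free: a left localisation of $\sC_T$ keeps the cofibrations (hence the trivial fibrations) and a right localisation of $\sC^F$ keeps the fibrations, so $R$ still preserves fibrations and trivial fibrations and remains right Quillen. With your arrangement, the assertion that ``$L$ still preserves cofibrations'' is unjustified and generally false: right Bousfield localisation \emph{shrinks} the class of cofibrations of the target, so a cofibration of $\sC_C$ need not be a cofibration of $\RB(\sC_C)$, and $LX$ need not be $K$-colocal for a general cofibrant $X$. (In fairness, the introduction's restatement of the theorem puts the localisations on the sides you chose; the theorem in the body, and its proof, put them the other way round.)

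A second, acknowledged but unclosed, gap is your cellular induction propagating the derived unit equivalence from generators to all cofibrant objects. That induction needs the class of good objects to be closed under pushouts along generating cofibrations, which requires the \emph{right} adjoint $R=[C,-]_C$ to preserve the relevant homotopy pushouts; presentability of $C$ only gives commutation of $[C,-]$ with sufficiently filtered colimits and says nothing about pushouts, and properness does not substitute for this. The paper avoids the induction altogether: the localising classes $\bA$ and $\bB$ are defined using \emph{all} cofibrant contramodules and all fibrant comodules (so every derived unit and counit is forced to become a weak equivalence by construction), local presentability is used only to replace these classes by sets $\bA^{\flat},\bB^{\flat}$ for which Hirschhorn's existence theorems apply, and the final step identifies the set-localisation with the class-localisation via the fact that the generators generate the homotopy categories. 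You should either adopt that reduction or supply the missing closure argument for $R$.
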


The last theorem should be interpreted to mean, as hinted in the previous paragraph, that under certain categorical assumptions we can find reasonable model structures on $\sC^C$ and $\sC_C$
so that the functors $(L\dashv R)$ define a Quillen equivalence (see Chapter~\ref{s2}). 
As a specific example, the category of simplical sets satisfies all the conditions of this theorem (Theorem~\ref{simplicial_set_CCC_strong}).

\subsection{Comodules and contramodules in the category of topological spaces} 
In Chapter~\ref{s6}  the base category is the category $\sW$ of compactly generated, weakly Hausdorff spaces, the most standard convenient category of topological spaces. A comonoid in $\sW$ is a topological space $C$ with comultiplication given by its diagonal map. Most of the chapter is devoted to the general study of comodules and contramodules in $\sW$. One non-obvious fact about this category is the following theorem. 

\begin{result}{\em (Theorem~\ref{TB_cocomplete})}
Let $C$ be a topological space considered as a comonoid in $\sW$. Then the category of contramodules $\sW^C$ is cocomplete.
\end{result}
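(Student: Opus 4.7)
My plan is to realize $\sW^C$ as the Eilenberg--Moore category $\sW^T$ of algebras over the monad $T = [C,-]$ on $\sW$, with monadic forgetful functor $G^C : \sW^C \to \sW$ admitting the free functor $F : \sW \to \sW^C$, $F(X) = [C,X]$, as its left adjoint. Since $\sW$ is bicomplete, a standard reduction going back to Linton shows that $\sW^T$ is cocomplete once it admits coequalizers: coproducts and then arbitrary colimits can be assembled from coequalizers together with the reflexive coequalizer presentation $F G^C F G^C Y \rightrightarrows F G^C Y \to Y$ that every contramodule admits, and the fact that $F$, as a left adjoint, transports colimits computed in $\sW$ to colimits of free contramodules.

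The principal obstacle is the construction of coequalizers themselves. Given a parallel pair $f, g : (Y, \theta) \rightrightarrows (Y', \theta')$ of contramodule morphisms, the naive coequalizer $q_0 : Y' \to Q_0$ formed in $\sW$ generally fails to carry a contramodule structure: the internal hom $[C,-]$, being a right adjoint in $\sW$, does not in general preserve the quotient $q_0$, so the composite $q_0 \circ \theta' : [C, Y'] \to Q_0$ has no reason to descend along $[C, q_0]$ to a candidate structure map $[C, Q_0] \to Q_0$.

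To circumvent this, I would build the coequalizer by a transfinite iteration internal to $\sW$. At each successor stage $\alpha + 1$ I form a pushout in $\sW$ that identifies precisely those pairs of points in $Q_\alpha$ needed to make the putative structure map well-defined, contraassociative, and contraunital; at each limit stage I take the colimit of the tower. The resulting space $Q_\lambda$ acquires a canonical $\theta_{Q_\lambda} : [C, Q_\lambda] \to Q_\lambda$, and one then verifies by chasing the universal properties of the pushouts that $Q_\lambda$ is indeed the coequalizer in $\sW^C$.

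The hardest step is showing that the transfinite chain stabilizes. This will rely on presentability-style properties of $\sW$: every compactly generated weakly Hausdorff space is $\lambda$-small relative to a cardinal $\lambda$ determined by the set-theoretic size of $Y'$ together with $C$, so that the cardinality of the equivalence relations generated at successive stages cannot strictly increase past some ordinal. Proving this carefully, and identifying the right notion of "collapse" to enforce both contraassociativity and contraunitality simultaneously at each pushout step, is the main technical content of the argument; the remaining verifications of the universal property are then formal consequences of the construction.
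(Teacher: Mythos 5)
Your reduction to coequalisers via Linton's argument is legitimate (it is condition~(2) of Lemma~\ref{_cocomplete}), and you have correctly located the obstacle: the structure map does not descend along the quotient formed in $\sW$. But the transfinite-pushout construction you propose for the coequaliser has a gap that is not merely technical. At each successor stage you only \emph{identify} points of $Q_\alpha$ so as to make the putative structure map well defined; this presupposes that every continuous map $f : C \to Q_\alpha$ lifts along the quotient $Y' \to Q_\alpha$, for otherwise there is no candidate value of $\theta_{Q_\alpha}(f)$ in the first place. In $\sW$ the functor $[C,-]$ does not take quotient maps to surjections (for instance $\mathrm{id}_{S^1}$ does not lift along $[0,1]\to S^1$), so non-liftable maps genuinely occur; repairing this by freely adjoining values $\theta(f)$ would enlarge the underlying set at every stage and destroy your stabilisation argument. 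That argument is itself on shaky ground: the ``presentability-style properties of $\sW$'' you invoke do not exist --- Lemma~\ref{B_presentable} shows that a space is presentable only if it is discrete, and the smallness that does hold in $\sW$ is smallness relative to \emph{inclusions}, whereas your tower consists of quotient maps. (A cruder count of equivalence relations on the fixed underlying set of $Y'$ would stabilise the identifications, but it cannot cure the non-liftability problem.)

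The paper's proof of Theorem~\ref{TB_cocomplete} takes a different route that sidesteps the quotient issue entirely: it never attempts to endow a quotient of $Y'$ (or of the colimit in $\sW$) with a contramodule structure. Instead, given any cocone under the diagram with target $Y$, it factors the cocone through the subcontraspace generated by the image of the mediating map $V \to Y$, where $V$ is the colimit taken in $\sW$; the transfinite description of generated subcontraspaces (Proposition~\ref{ZB_description}) --- which works because $\theta$ is already defined on the ambient contramodule $Y$ --- bounds the cardinality of this subcontraspace in terms of $|C|$ and $|V|$ only. The colimit is then obtained as the \emph{limit}, in the complete category $\sW^C$, of the small diagram of all cocones with target of bounded cardinality. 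If you want to salvage your construction, the missing ingredient is precisely such a cardinality bound established inside an ambient contramodule rather than on a quotient.
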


The conditions of Theorem~\ref{res4} do not hold in $\sW$ for set-theoretic reasons. Yet we can prove some interesting facts about the topological comodule-contramodule correspondence.

\begin{result} {\em (Propositions~\ref{top_CCC_Grothendieck}, \ref{easy_top_fact}  and Theorem~\ref{last_thm})} 
Let $C$ be a topological space considered as a comonoid in $\sW$. 
\begin{enumerate}
\item The co-contra correspondence $L : \sW^C \to \sW_C : R$ is a Quillen adjunction between $\sW^C$ and $\sW_C$.
 \item If all topological spaces are subsets of a Grothendieck universe, the adjunction $L : \sW^C \to \sW_C : R$ defines a Quillen equivalence between a left Bousfield localisation $\LB(\sW_C)$ and a right Bousfield localisation $\RB(\sW^{C})$.
\item If $X,Y\in \sW_C$ are CW-complexes and $f \in\sW_C(X,Y)$ is a weak equivalence, then $R(f)$ is a  weak equivalence.
\item Suppose that $C$ is a CW-complex of finite type. If $X,Y\in \sW_C$ are fibrant and $f \in \sW_C(X,Y)$ is a weak equivalence such that $\pi_0 ( f)$ is an isomorphism, then $R(f)$ is a  weak equivalence.
\end{enumerate}
\end{result}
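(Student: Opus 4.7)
The plan is to dispatch the four parts in sequence, leveraging the general machinery developed earlier in the paper. For part (1), the approach is to verify the hypotheses of Proposition~\ref{Quillen_adjunction_cartesian_cat}: $\sW$ is a cartesian closed symmetric monoidal model category in the standard way, and $\sW_C$ is identified with the over-category $\sW/C$ carrying the usual slice model structure, which is left-induced from $\sW$. The right-induced model structure on $\sW^C$ is the technical heart of (1); it is obtained by a Quillen right-transfer along the forgetful functor $G^C:\sW^C \to \sW$, using the cocompleteness of $\sW^C$ from Theorem~\ref{TB_cocomplete} and verifying the smallness and path-object conditions available in compactly generated weak Hausdorff spaces. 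Part (2) would follow directly from Theorem~\ref{Quillen_equivalence_cartesian_cat} if $\sW$ were locally presentable, which it is not; the Grothendieck universe assumption circumvents this obstacle by restricting to the essentially small subcategory $\sW^\sU$ of spaces inside a universe $\sU$, on which local presentability holds while cartesian closedness and left/right-properness persist, allowing the theorem to apply.

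For part (3), the first step is to identify the right adjoint $R$ concretely: following the pattern of the set case (Theorems~\ref{any_contraset} and \ref{set_CCC}), $R$ sends a comodule $X\to C$ to the contramodule of continuous sections of the structure map, equipped with the compact-open topology, and with contramodule action given by composition. Given a weak equivalence $f:X\to Y$ of CW-complexes over $C$, the plan is to factor the structure maps through trivial cofibrations into Serre fibrations, producing fibrant replacements $\tilde X,\tilde Y\in\sW_C$ together with a lift $\tilde f$. By Ken Brown's lemma applied to the right Quillen functor $R$ from (1), $R(\tilde f)$ is a weak equivalence, so it remains to show that $R(X)\to R(\tilde X)$ is a weak equivalence whenever $X$ is a CW-complex; this is approached by cellular induction on $X$, controlling how sections propagate across the cell attaching maps in the presence of the trivial cofibration $X \hookrightarrow \tilde X$.

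Part (4) is handled by classical obstruction theory for sections of fibrations. Fibrancy of $X$ and $Y$ in $\sW_C$ makes $X\to C$ and $Y\to C$ Serre fibrations, so the weak equivalence $f$ restricts to weak equivalences on fibres; the $\pi_0$-isomorphism matches path components of the section spaces, and finite type of $C$ permits an induction over the cellular filtration of $C$, combining the long exact sequences of the fibrations with the five-lemma to upgrade fibrewise weak equivalences to weak equivalences on the spaces of sections. The main obstacle across the entire theorem is part (3): asserting that $R$ preserves weak equivalences between non-fibrant CW objects is strictly stronger than the right-Quillen preservation supplied by (1) and demands a genuinely homotopical analysis of section spaces over maps that are not fibrations, whereas (1), (2), and (4) reduce, after verifying hypotheses, to machinery already in place.
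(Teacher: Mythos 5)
Your treatments of parts (1), (2) and (4) follow the paper's route. Part (1) is Proposition~\ref{Quillen_adjunction_cartesian_cat} once the slice model structure on $\sW_T\cong(\sW\!\downarrow\!\chf)$ and the right-induced structure on $\sW^F$ are in place; the paper obtains the latter by transfer, checking that relative $F(\bI)$- and $F(\bJ)$-complexes permit the small object argument and that relative $F(\bJ)$-complexes are deformation retracts, which is the same verification you sketch. Part (2) is stated in the paper to have ``an identical proof'' to Theorem~\ref{Quillen_equivalence_cartesian_cat} modulo the universe assumption; one caveat on your justification is that passing to a Grothendieck universe only controls the size of the localisation classes --- it does not make $\sW$ locally presentable, since by Lemma~\ref{B_presentable} non-discrete spaces fail to be presentable for reasons that are not set-theoretic. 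Part (4) is exactly the paper's argument: the cell-by-cell five-lemma induction over the finite-type complex $\chf$ is Lemma~\ref{Standard_top_fact}, which is then fed into the map of fibration sequences $R X_s\to FX_s\to[\chf,\chf]_{\Id}$, with the $\pi_0$ hypothesis supplying the missing degree.

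The genuine gap is in part (3). Your plan is to take fibrant replacements $X\hookrightarrow\tilde X$, $Y\hookrightarrow\tilde Y$, apply Ken Brown's lemma to $R(\tilde f)$, and then prove that $R(X)\to R(\tilde X)$ is a weak equivalence for every CW comodule $X$ by cellular induction. That intermediate claim is false, and the paper itself contains the counterexample: take $\chf=S^1$ and the figure-eight cospace $X=S^1\vee S^1$ with $\phi_X=\Cnt\vee\Id_{S^1}$. Then $R X$ is a one-point space, whereas $R(\tilde X)$ is a space of homotopy sections whose $\pi_0$ contains, for instance, the non-conjugate splittings $b$ and $ab$ of $\pi_1(X)=F_2=\langle a,b\rangle\twoheadrightarrow\Z$; so $R X\to R(\tilde X)$ fails to be a weak equivalence even though $X$ is a CW complex and $X\hookrightarrow\tilde X$ is a trivial cofibration. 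No cellular induction can repair this, because $R$ computes strict sections and these are not invariant under replacing a non-fibration by a fibration. The statement you were given is the introduction's loose summary; the actual Proposition~\ref{easy_top_fact} additionally hypothesises $X,Y\in(\sW_T)_{\Fib}$, and with that hypothesis the proof is short and quite different from yours: Whitehead's theorem makes $f$ a genuine homotopy equivalence, Dold's theorem \cite[7.5]{May} upgrades a homotopy equivalence between fibrations over a common base to a fibrewise homotopy equivalence, and a fibrewise homotopy equivalence induces a homotopy equivalence on section spaces. You need to reinstate the fibrancy hypothesis; once you do, no fibrant replacement or cell-by-cell argument is required.
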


\section{Monad-Comonads Adjoint Pairs over Closed Categories}
\label{s1}


\subsection{Closed categories}
\label{s1.1}
Let us consider a symmetric monoidal category ${\sC}$
with hom sets $\sC (X,Y)$, tensor product $\otimes$, unit object $\unit$, associator $\alpha$,
symmetric braiding $\gamma$, left unitor $\lambda$ and right unitor $\varpi$. The latter four are natural isomorphisms
$$
\alpha_{X,Y,Z}: (X\otimes Y)\otimes Z \xrightarrow{\cong} X \otimes (Y\otimes Z), \
\gamma_{X,Y}: X\otimes Y \xrightarrow{\cong} Y\otimes X,
$$
$$
\lambda_X : \unit \otimes X \xrightarrow{\cong} X, \ 
\varpi_X : X \otimes \unit \xrightarrow{\cong} X,
$$
depending on objects $X,Y,Z \in \sC$.

The category $\sC$ is called {\em a closed symmetric monoidal category} if for any object $X\in {\sC}$
the endofunctor $- \otimes X$ 
admits a right adjoint
endofunctor $[X, - ]_{\sC}$ called {\em the internal hom} \cite{LaP}.
When the category in question is clear, we use the shorthand notation  $[X,Y]$  for $[X,Y]_{\sC}$.

Recall {\em the functor of global sections}:
\begin{equation} \label{global_sections}
\Gamma : \sC \rightarrow \sSets, \ 
\
\Gamma (X) \coloneqq \sC (\unit, X).
\end{equation}
The relation between the hom and the internal hom is a natural isomorphism 
\begin{equation} 
\sC(X,Y) \cong  \Gamma ([X, Y]).
\end{equation}

In general, $[X,Y]$ is not even a set. A good category to keep in mind for illustration purposes is the category of $G\mbox{-}\sSets$ of $G$-sets for a group $G$. 
This category is {\em cartesian}: $X\otimes Y$ is the product $X\times Y$. The internal hom $[X,Y]$ is the set of all the functions $X\rightarrow Y$.
The ordinary hom is its fixed point set: $G\mbox{-}\sSets (X,Y) = [X,Y]^G$.

\subsection{Enriched categories}
\label{s1.enr}
          The standard reference for enriched categories is Kelly's book \cite{Kelly}.  
A category $\sA$ enriched in $\sC$ has 
hom objects and compositions
\begin{equation}
\label{hom_compl}  
[X,Y]_\sA\in\sC, \ \   
c_{X,Y,Z} \in \sC([Y,Z]_\sA \otimes [X,Y]_\sA , [X,Z]_\sA)
\end{equation}
for all $X,Y,Z\in\sA$, 
satisfying the standard axioms.
It  can be turned into an ordinary category by setting
\begin{equation} \label{enrich}
  \sA (X,Y) \coloneqq \Gamma ([X,Y]_\sA)\, .
\end{equation}
In the opposite direction, 
{\em an enrichment} of a category $\sA$ is a structure of enriched category such that
\eqref{enrich} holds.

A closed symmetric monoidal category $\sC$ is enriched in itself.
Its opposite category  $\sC^{op}$ is enriched in $\sC$:
\begin{equation}
[X,Y]_\sC \coloneqq [X,Y]\, , \ \ \ 
[X,Y]_{\sC^{op}} \coloneqq [Y,X]\, .
\end{equation}

For categories $\sA, \sB$ enriched in $\sC$, a {\em $\sC$-enriched functor}
$H: \sA \to \sB$ consists of the following data, satisfying the standard axioms:
\begin{itemize}
\item a map $H: \sA \to \sB$ from the objects of $\sA$ to the objects of $\sB$,
\item an $\sA \x \sA$-indexed family of morphisms in $\sC$
  \begin{equation} \label{enriched_functor}
    H_{X,Y}: [X,Y]_\sA \to [H X, H Y]_\sB,\end{equation}
which respect the enriched composition and units in $\sA$ and $\sB$.
\end{itemize}

\subsection{Adjoint functors}
\label{s1.1a}
Fix a closed symmetric monoidal category $\sC$.
Consider a pair of endofunctors  $T,F : \sC\rightarrow \sC$, not necessarily enriched. There are three different notions of adjointness:
\begin{itemize}
\item 
If a natural isomorphism of bifunctors
$$\sC (T - , {-}), \sC (- , F {-}):
{\sC}^{op} \times \sC \rightarrow \sSets$$
is chosen, then $T$ and $F$ are {\em externally adjoint}.
\item 
If $T$ and $F$ are enriched and a natural isomorphism of bifunctors
\begin{equation}\label{ext_adjunction}
  [T-,{-}], [-,F{-}]: {\sC}^{op}\otimes \sC \rightarrow \sC
\end{equation}
is chosen, 
then $T$ and $F$ are {\em internally adjoint}.
\item 
  Further, if 
  the natural isomorphism \eqref{ext_adjunction}
  is enriched, 
then $T$ and $F$ are {\em enriched adjoint}.
\end{itemize}
Without standard notation to distinguish the three, we write $(T\dashv F)$ in all of them.
These notions are related. 

\begin{lemma} \label{internal_to_external}
An enriched adjoint pair of endofunctors is internally adjoint.
An internally adjoint pair of endofunctors is externally adjoint.
\end{lemma}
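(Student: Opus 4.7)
The plan is to deduce the external adjunction by applying the global sections functor $\Gamma = \sC(\unit, -)$ to the given internal adjunction. The key tool is the natural isomorphism $\sC(X,Y) \cong \Gamma([X,Y])$ recalled in Section~\ref{s1.1}, which converts the internal hom $[\,,\,]$-valued bifunctors into ordinary $\sSets$-valued hom bifunctors once we apply $\Gamma$.

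More precisely, I would first observe that $\Gamma : \sC \to \sSets$ is functorial, so any natural isomorphism of bifunctors $\sC^{op}\times \sC \to \sC$ yields a natural isomorphism of the post-composed bifunctors into $\sSets$. Applying this to the given internal adjunction $[T-,-] \cong [-,F-]$ produces a natural isomorphism
$$
\Gamma([T-,-]) \;\cong\; \Gamma([-,F-])
$$
of bifunctors $\sC^{op}\times \sC \to \sSets$. Next, the natural isomorphism $\sC(X,Y)\cong\Gamma([X,Y])$, which comes from the ordinary adjunction $-\otimes X \dashv [X,-]$ together with the left unitor $\lambda_X : \unit\otimes X \xrightarrow{\cong} X$, rewrites each side as the desired external hom bifunctor, giving
$$
\sC(T-,-) \;\cong\; \Gamma([T-,-]) \;\cong\; \Gamma([-,F-]) \;\cong\; \sC(-,F-).
$$
Composing these isomorphisms gives the required external adjunction $(T\dashv F)$.

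The only point requiring care is verifying that the isomorphism $\sC(X,Y)\cong\Gamma([X,Y])$ is natural in both variables simultaneously, so that the composite above is natural as a bifunctor isomorphism $\sC^{op}\times \sC \to \sSets$. Naturality in $Y$ is immediate from naturality of the counit/unit of the adjunction $-\otimes X\dashv [X,-]$; naturality in $X$ follows from the contravariant functoriality of $[-,Y]$ together with naturality of $\lambda$. This is the only mildly technical step, but it is routine and does not rely on any structure beyond that of a closed monoidal category. No further hypothesis (e.g.\ symmetry or biclosedness) is needed for this direction, and the argument makes the converse fail in general since $\Gamma$ is not conservative.
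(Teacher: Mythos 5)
Your proposal is correct and follows essentially the same route as the paper: apply the global sections functor $\Gamma$ to the internal adjunction and use the natural isomorphism $\sC(X,Y)\cong\Gamma([X,Y])$ to identify the resulting $\sSets$-valued bifunctors with the external homs. Your extra attention to bifunctorial naturality of that identification is a welcome elaboration of a step the paper leaves implicit, but it does not change the argument.
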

\begin{proof}
The second claim is obvious: just forget the enrichment.
The first claim follows from applying the global sections~\eqref{global_sections} $\Gamma$ to the internal adjunction $\tau$
$$\sC (T-,{-}) \cong \Gamma ([T-,{-}]) \stackrel{\Gamma (\tau)}{\cong} \Gamma ([-,F{-}])  \cong \sC (-,F{-})$$
as functors 
${\sC}^{op}\times \sC \rightarrow \sSets$.
\end{proof}

\begin{defn} \label{defn_chief}
Let $(T\dashv F)$ be an internally adjoint pair of endofunctors on $\sC$. We define {\em the chief} (or {\em the chief object})
of the pair $(T \dashv F)$ as $\chf \coloneqq T\unit$.
\end{defn}

The following lemma,
motivating our interest in the chief,
is {\em surprising}, due to its implications. 
\begin{lemma} \label{surprising}
Let $(T\dashv F)$ be an internally adjoint pair of endofunctors of $\sC$ and 
  $\chf$ their chief. 
  Then there are natural isomorphisms of functors
  $$
  F\cong [\chf, - ], \ \ 
  T \cong \ - \otimes \chf
  $$
  such that the following diagram commutes   
  for all $X,Y\in\sC$: 
  $$
\begin{CD}
 [T(X),Y] @>{{\cong}_{1}}>> [X, F(Y)]  \\
 @V{{\cong}_{4}}VV  @VV{{\cong}_{2}} V\\
 [X \otimes C,Y] @>{{\cong}_{3}}>> [X, [C,Y]]\\
 \end{CD} 
 $$
\end{lemma}
\begin{proof} Using the isomorphism $i_X : X \rightarrow [\unit,X]$,
  we obtain the first natural isomorphism as the composition
$$
[\chf,X] \xrightarrow{\cong} [T\unit , X] \xrightarrow{\cong} [\unit , F X] \xrightarrow{\cong} FX. 
$$
Thus, we have natural isomorphisms of bifunctors $\cong_{1}$, $\cong_{2}$ and $\cong_{3}$.
We define $\cong_{4}$ as the composition
\[
[X\otimes \chf , Y] \xrightarrow{\cong}
[X, [\chf,Y]] \xrightarrow{\cong}
[X,FY] \xrightarrow{\cong}
[TX,Y].
\]  
This ensures commutativity of the square. It remains to notice that
the natural isomorphism of representable functors
$$
\gamma_X : \sC(X\otimes \chf, - ) = \Gamma ([X\otimes \chf, - ]) \xrightarrow{\cong} \Gamma ([TX, - ])  = \sC (TX, - )
$$
yields, by the Yoneda Lemma,  an isomorphism of representing objects
$$
\beta_X : X\otimes \chf \xrightarrow{\cong}   TX, 
$$
natural in $X$.
Hence, $\beta_X$ is a natural
isomorphism of functors.
\end{proof}

Since  $[\chf, - ]$ and $\, - \, \otimes \chf$ are enriched adjoint, the surprising lemma
allows us to replace an internal adjunction with an enriched (possibly different) adjunction.

\begin{cor}
Let $(T\dashv F)$ be an internally adjoint pair of enriched endofunctors of $\sC$.
There exists an enriched adjunction $(T\dashv F)$.
\end{cor}

\subsection{Monads and comonads}
\label{s1.2}
Consider an object $C$ of a monoidal category $\sC$
and the corresponding enriched adjoint pair $(T \dashv F)$ of endofunctors $T=\, - \, \otimes \chf$ and $F=[\chf, - ]$.
It is well known that 
$$
T \mbox{ is a monad } \stackrel{\mbox{\cite[{\small Prop. 3.1}]{EiM}}}{\Longleftrightarrow}
F \mbox{ is a comonad } \Leftrightarrow
\chf \mbox{ is a monoid.}
$$
Our goal is to make a precise dual enriched statement to this one.
Consider a monad  $(F, \mu, \eta)$ and a comonad $(T, \delta, \epsilon)$. Here
\[ \mu: FF \longrightarrow F, \
\eta: \Id_{\sC} \longrightarrow F, \
\delta: T \longrightarrow TT, \ 
\epsilon: T \longrightarrow \Id_{\sC}\]
are natural transformations, satisfying associativity and unitality conditions 
\cite[2.3, 2.4]{Bohm}, \cite[§2]{EiM}, \cite[Sec. II]{Str_2}. 
See \cite{nlab_SM} for what makes a (co)monad strong or enriched.

\begin{lemma} 
\label{como_mo}
Let $\sC$ be a closed symmetric monoidal category,  $\chf\in\sC$.
Consider the enriched endofunctors $T=\, - \, \otimes \chf$ and $F=[\chf, - ]$,
together with 
their enriched adjunction $(T \dashv F)$.
There are bijections between the following three sets
\begin{itemize}
\item the set of strong comonad structures on $T$,
\item the set of strong monad structures on $F$,
\item the set of comonoid structures on $C$.
\end{itemize}
\end{lemma}
\begin{proof}
  Start with a comonad structure on $T$.
  We get a comultiplication and a counit on $\chf$ by
  \[
  \chf \xrightarrow{\cong} \unit \otimes \chf \xrightarrow{\delta_{\unit}} \chf \otimes \chf
  \mbox{ and }
  \chf \xrightarrow{\cong} \unit \otimes \chf \xrightarrow{\epsilon_{\unit}} \unit \, .
  \]
  Verification of the axioms is routine.
  

If $(\chf, \delta, \varepsilon)$ is a comonoid, we obtain a strong comonad structure on $T$ by defining the natural transformations $\delta$, $\epsilon$ explicitly:
\[\delta_X: TX=X \otimes \chf \xrightarrow{\Id_X \otimes \, \delta} X \otimes (\chf \otimes \chf) \xrightarrow{\alpha^{-1}} TTX,\]
\[\epsilon_X: TX \xrightarrow{\Id_X \otimes \, \varepsilon}  X \otimes I \cong X . \]
Again, all the axioms are routine.

This gives the bijection between
the set of comonoid structures on $\chf$ to the class of strong comonad structures on $T$.
In particular, this class is a set.

A proof for the set of strong monad structures is similar.
\end{proof}

\subsection{Accessibility and presentability}
\label{s1.2a}
Occasionally we assume that $\sC$ is {\em accessible} or {\em locally presentable}.
We follow Ad\'{a}mek and Rosicky \cite{AdRo} with our terminology.

For the convenience of the reader, we recall the key definitions. Given a regular cardinal $\Lambda$, an object $X$ of some category $\sB$ is called {\em $\Lambda$-presentable}, if  $\sB(X,-)$ preserves $\Lambda$-directed colimits.
An object $X$ is {\em presentable}, if it is $\Lambda$-presentable for some regular cardinal $\Lambda$. 

The category $\sB$ is {\em locally $\Lambda$-presentable}, if it is cocomplete and admits a set $Z$ of $\Lambda$-presentable objects such that every object is a $\Lambda$-directed colimit of objects from $Z$.
The category $\sB$ is {\em locally presentable}, if it is {\em locally $\Lambda$-presentable} for some regular cardinal $\Lambda$.

Similarly, the category $\sB$ is {\em $\Lambda$-accessible}, if it has $\Lambda$-directed colimits and admits a set $Z$ of $\Lambda$-presentable objects such that every object is a $\Lambda$-directed colimit of objects from $Z$.
The category $\sB$ is {\em accessible}, if it is {\em $\Lambda$-accessible} for some regular cardinal $\Lambda$.
The following facts are useful:
\begin{enumerate} 
\item $\sB$ is locally presentable if and only if $\sB$ is accessible and cocomplete [by definition].  
\item $\sB$ is locally presentable if and only if $\sB$ is accessible and complete \cite[Cor. 2.47]{AdRo}.
\item If $\sB$ is accessible, then each $X\in\sB$ is presentable \cite[Cor. 2.3.12]{MaPa}. 
\end{enumerate}  

Finally, a functor $H:\sA\rightarrow \sB$ is $\Lambda$-accessible if
both categories $\sA$ and $\sB$ are $\Lambda$-accessible and $H$ preserves $\Lambda$-directed colimits.
The functor $H$ is accessible if it is $\Lambda$-accessible for some regular cardinal $\Lambda$.
Since $\sSets$ is locally presentable, the functor $\sB(X,-)$ is accessible for any object $X$ of an accessible category $\sB$.

\subsection{Categories of comodules and contramodules}
\label{s1.2b}
By {\em comodules} we understand objects in the category of $T$-comodules $\sB_T$.
By {\em contramodules} we understand objects in the category $F$-modules $\sB^F$.
\begin{lemma}
  \label{complete_cocomplete}
  Let $(T \dashv F)$ be an adjoint comonad-monad pair on a complete cocomplete category $\sB$. Then $\sB_T$ is cocomplete and $\sB^F$ is complete.
\end{lemma}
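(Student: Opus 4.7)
The plan is to show that the forgetful functors $G^F : \sB^F \to \sB$ and $G_T : \sB_T \to \sB$ create limits and colimits respectively. The key input is that, since $(T \dashv F)$ is an adjoint pair, $T$ preserves all colimits that exist in $\sB$ and $F$ preserves all limits that exist in $\sB$; the comonad/monad structures then transfer to any (co)limit in $\sB$ in an essentially unique way.

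For completeness of $\sB^F$: given a diagram $D : \sI \to \sB^F$, let $L = \lim_{i \in \sI} G^F D(i)$ in $\sB$ with projections $p_i : L \to D(i)$. Since $F$ is a right adjoint, the canonical map $FL \to \lim_i F D(i)$ is an isomorphism. The structure maps $\theta_{D(i)} : F D(i) \to D(i)$ are natural in $i$ (because morphisms in $\sB^F$ commute with the structure maps), so $\{\theta_{D(i)} \circ F p_i : FL \to D(i)\}_i$ forms a cone over $G^F D$. By the universal property of $L$ this induces a unique morphism $\theta_L : FL \to L$ in $\sB$ with $p_i \theta_L = \theta_{D(i)} F p_i$. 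The unit axiom $\theta_L \eta_L = \Id_L$ and the associativity axiom $\theta_L \mu_L = \theta_L F(\theta_L)$ are verified by post-composing each side with the projection $p_i$ and using the module axioms on $D(i)$ together with naturality of $\eta$ and $\mu$; uniqueness of the induced maps into $L$ then forces equality. Finally, $(L, \theta_L)$ is the limit in $\sB^F$: any cone over $D$ in $\sB^F$ is in particular a cone in $\sB$, so factors uniquely through $L$, and the factoring morphism is automatically $F$-linear (again by uniqueness of the induced map into a limit).

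Dually, for cocompleteness of $\sB_T$: given $D : \sI \to \sB_T$, let $C = \colim_i G_T D(i)$ in $\sB$ with coprojections $q_i : D(i) \to C$. Since $T$ is a left adjoint, the canonical map $\colim_i T D(i) \to TC$ is an isomorphism, so the cocone $\{T q_i \circ \rho_{D(i)} : D(i) \to TC\}_i$ induces a unique morphism $\rho_C : C \to TC$ with $\rho_C \, q_i = T q_i \circ \rho_{D(i)}$. The counit axiom $\epsilon_C \rho_C = \Id_C$ and the coassociativity axiom $\Delta_C \rho_C = T(\rho_C) \rho_C$ are verified by pre-composing each side with $q_i$, using the comodule axioms on $D(i)$ and the naturality of $\epsilon$ and $\Delta$; uniqueness of the maps out of a colimit then yields the required equalities. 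A cocone in $\sB_T$ under $D$ is in particular a cocone in $\sB$ and so factors through $C$ by a unique map in $\sB$, which a further uniqueness argument shows to be $T$-colinear.

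The main (and essentially only) obstacle is the verification of the structure axioms on the constructed (co)limit. This is not deep but does require careful bookkeeping: one must rewrite both sides of each axiom as morphisms into $L$ (respectively out of $C$) and recognise them, via naturality of $\mu, \eta$ (respectively $\Delta, \epsilon$) and the preservation of limits by $F$ (respectively colimits by $T$), as two factorisations of the same cone (respectively cocone), which must therefore agree.
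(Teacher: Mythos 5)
Your proposal is correct and is essentially the paper's own argument: the paper simply cites the standard fact that the forgetful functor from the Eilenberg--Moore category of a monad creates limits (and dually that the forgetful functor from the category of comodules over a comonad creates colimits), while you write out that creation argument in full, and your verification of the module/comodule axioms via the jointly monic projections (resp.\ jointly epic coprojections) is sound. One small remark: the adjunction $(T\dashv F)$ is not actually needed here --- the steps where you invoke ``$F$ preserves limits'' and ``$T$ preserves colimits'' are superfluous, since the cone $\{\theta_{D(i)}\circ Fp_i\}$ already has apex $FL$ and factors through $L$ by the universal property of $L$ alone, so the creation of limits (resp.\ colimits) holds for an arbitrary monad (resp.\ comonad).
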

\begin{proof}
  Since $F$ is a monad on $\sB$, the forgetful functor $G^F: \sB^F \to \sB$ creates limits \cite[Th. 3.4.2]{TTT}. Hence, as $\sB$ is complete, so is $\sB^F$. Similarly, since $T$ is a comonad, the forgetful functor $G_T: \sB_T \to \sB$ creates colimits. Hence, $\sB_T$ is cocomplete \cite{HeShi}.
\end{proof}

The questions of cocompleteness of $\sB^F$ and completeness of $\sB_T$ require additional assumptions.
\begin{prop}
  \label{chief_presentable}
Let $(T \dashv F)$  be an enriched adjoint strong comonad-monad pair on a locally presentable, complete, cocomplete left closed monoidal category $\sC$.
Then the categories $\sC_T$ and $\sC^F$ are complete, cocomplete and locally presentable.
\end{prop}
\begin{proof} By Lemma~\ref{complete_cocomplete}, $\sC_T$ is cocomplete and $\sC^F$ is complete.

  The chief $\chf$ is presentable, hence $F$ is accessible (Section~\ref{s1.2a}). 
  The accessibility of $F$ implies that $\sC^F$ is accessible  \cite[Th. 2.78]{AdRo}.
Hence, $\sC^F$ is cocomplete and locally presentable (Section~\ref{s1.2a}). 

The functor $T$ is cocontinuous since it is left adjoint.
Hence,  $T$ is accessible.
By \cite[Cor. 2.8]{HLFV}, $\sC_T$ is accessible.
A cocomplete accessible category is complete and locally presentable (Section~\ref{s1.2a}). 
\end{proof}

\subsection{Comodules and contramodules as enriched categories}
\label{s1.5}
%
The categories of comodules and contramodules admit enrichments in $\sC$ 
that interact with the cofree comodule functor $T^{\sharp}$
and the free contramodule functor $F^{\sharp}$ \cite{Str_1,Str_W}
\[
T^{\sharp}: \sC \rightarrow \sC_T, \ T^{\sharp}(X)=T(X), \ \
F^{\sharp}: \sC \rightarrow \sC^F, \ F^{\sharp}(X)=F(X)
\]
where the comonad structure of $T$ gives the structure map $T(X) \rightarrow TT(X)$ and ditto for $F$.

\begin{prop} \cite[Th. 15]{Str_1}
\label{C_TF enriched}
Suppose a closed symmetric monoidal category $\sC$ satisfies the weak version of completeness
in Assumption~\ref{assum_1}.
\begin{enumerate}
\item If $T$ is a strong comonad on $\sC$, then the comodule category $\sC_T$ admits an enrichment
  such that
  \[(G_T \dashv T^{\sharp}), \ \ \  G_T : \sC_T \rightleftarrows \sC : T^{\sharp}\]
  is an enriched adjunction where $G_T$ is the forgetful functor.
\item If $F$ is a strong monad on $\sC$, then the contramodule category $\sC^F$ admits an enrichment such that
  \[(F^{\sharp} \dashv G^F), \ \ \ F^{\sharp} : \sC \rightleftarrows \sC^F : G^F\] is an enriched adjoint pair 
  where $G^F$ is the forgetful functor.
\end{enumerate}
\end{prop}

We denote {\em the comodule maps object} between objects $(X,\rho_X), (Y,\rho_Y) \in \sC_T$ by $[X,Y]_T$.
It is defined in $\sC$, completely parallel to what one does to define
homomorphisms of comodules or contramodules in the category of vector spaces (cf. Section~\ref{s1.3}).
Let us consider the following two morphisms. The first morphism is
the internal analogue of the composition with $\rho_Y$:
\begin{equation}\label{phiT}
\phi^T_{X,Y} : [X,Y] \xrightarrow{[\Id_X, \rho_Y]} [X,TY].
\end{equation}
The second morphism comes from the enrichment of $T$
\begin{equation}\label{psiT}
\psi^T_{X,Y} : [X,Y] \longrightarrow [TX,TY] \xrightarrow{[\rho_X, \Id_{TY}]} [X,TY].
\end{equation}
  The comodule maps object $[X,Y]_T$ is  the equaliser of $\phi^T_{X,Y}$ and $\psi^T_{X,Y}$.

Similarly, {\em the contramodule maps object} 
between objects $(X,\theta_X), (Y,\theta_Y) \in \sC^F$ is denoted
$[X,Y]^F$.
Again consider the two morphisms
\begin{equation}\label{phiF}
\phi^F_{X,Y} : [X,Y] \longrightarrow [FX,FY] \xrightarrow{[\Id_{FX}, \theta_Y]} [FX,Y], 
\end{equation}
\begin{equation}\label{psiF}
\psi^F_{X,Y} : [X,Y] \xrightarrow{[\theta_X, \Id_Y]} [FX,Y].
\end{equation}
          The contramodule maps object 
    $[X,Y]^F$
    is the equaliser of the maps $\phi^F_{X,Y}$ and $\psi^F_{X,Y}$.
    Notice that both pairs of morphisms admit a common left inverse, coming from the counit of $\chf$.
This is the reason behind Assumption~\ref{assum_1}.

\subsection{Comodule-contramodule correspondence}
\label{s1.6a}
If $(T \dashv F)$ is an adjoint monad-comonad pair, then the categories
$\sC^T$ and $\sC_F$ are isomorphic.
In the case of a comonad-monad pair, the relation between
$\sC_T$ and $\sC^F$
is known as
the comodule-contramodule correspondence.
Notice that the co-contra correspondence exists also in situations not covered by the present set-up, for instance, comodules and contramodules over corings or semi-algebras \cite{Pos2}.
We expect that our methods could be extended to cover such, more general situations.

Let us state the main results of Chapter~\ref{s1}. Their proofs can be found in Sections~\ref{s1.6a_R}, \ref{s1.6a_L}.
and \ref{s1.6a_LR} correspondingly. 
 
\begin{prop} 
  \label{thm_R}
Let $\sC$ be a closed symmetric monoidal category that  satisfies the weak version of completeness
in Assumption~\ref{assum_1}, 
$\chf$ -- a comonoid in $\sC$.
Consider the enriched endofunctors $T=\, - \, \otimes \chf$ and $F=[\chf, - ]$,
together with 
their enriched adjunction $(T \dashv F)$.
\begin{enumerate}
  \item If $X\in \sC_T$, then the hom-object $[\chf,X]_T$ admits a  contramodule structure.
  \item The assignment  $X \mapsto [\chf,X]_T$
determines an enriched functor $R : \sC_T\rightarrow \sC^F$.
\end{enumerate}
\end{prop}

In the case when $\sC$ is the category of vector spaces the functor $R$ admits a left adjoint functor $L$,
given by the contratensor product $Y \mapsto \chf \odot_{\chf} Y$ (cf. Example \ref{s1.3}).
Pushing it through in general categories requires coequalisers as well as equalisers.
Given $(Y,\theta_Y)\in \sC^F$,
consider the following morphisms in $\sC$:
\begin{equation}\label{alpha_beta}
\delta_{Y} : TFY \xrightarrow{T \theta_Y} TY, \ \ 
\beta_{Y} : TFY \xrightarrow{\delta FY} TTFY \xrightarrow{T \epsilon \Id_Y} TY. 
\end{equation}

\begin{prop} 
  \label{thm_L}
  Under the assumptions of Proposition~\ref{thm_R}, suppose further that  $\sC$
  satisfies the weak version of cocompleteness
  in Assumption~\ref{assum_2}.
The assignment of the coequaliser of $\delta_Y$ and $\beta_Y$ to any contramodule $Y$ 
determines an enriched functor $L : \sC^F\rightarrow \sC_T$. 
\end{prop}


\begin{thm}
  \label{adjoint_LR}
  Under the assumptions of Proposition~\ref{thm_L},
$(L \dashv R)$ is a $\sC$-enriched adjoint pair.
\end{thm}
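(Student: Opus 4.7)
My plan is to establish $(L \dashv R)$ as a $\sC$-enriched adjunction by exhibiting a natural isomorphism of internal hom objects
$$[LY, X]_T \;\cong\; [Y, RX]^F$$
in $\sC$ for $Y \in \sC^F$ and $X \in \sC_T$. The strategy is to unwind both sides as iterated equalisers and identify them via the internal adjunction $[TA, B] \cong [A, FB]$ supplied by the biclosed monoidal structure together with the identifications $T \cong -\otimes \chf$ and $F \cong [\chf, -]$ from Lemma~\ref{surprising}. Concretely, since $LY$ is the coequaliser of $\delta_Y, \beta_Y \colon TFY \rightrightarrows TY$ and the contravariant internal hom converts coequalisers to equalisers, $[LY, Z]$ is the equaliser of $[TY, Z] \rightrightarrows [TFY, Z]$ for $Z \in \{X, TX\}$. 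Substituting into the outer equaliser $[LY, X]_T = \mathrm{eq}(\phi^T_{LY, X}, \psi^T_{LY, X})$ exhibits the left side as the limit of a $2 \times 2$ diagram with corners $[TY, X], [TFY, X], [TY, TX], [TFY, TX]$. Dually, $RX$ is the equaliser of $FX \rightrightarrows FTX$; applying the limit-preserving functors $[Y, -]$ and $[FY, -]$ and then the outer equaliser defining $[Y, RX]^F$ presents the right side as the limit of a $2 \times 2$ diagram with corners $[Y, FX], [FY, FX], [Y, FTX], [FY, FTX]$.

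The biclosed currying $[A \otimes \chf, B] \cong [A, [\chf, B]]$ identifies the four corners of the first square with those of the second. The crucial step is checking that the eight arrows of the two squares correspond under these corner isomorphisms. For the arrows induced by $\delta_Y = T\theta_Y$: by naturality of the currying in the first variable, precomposition with $T\theta_Y$ corresponds to precomposition with $\theta_Y$ on the right, giving exactly $\psi^F_{Y, FX}$. For those induced by $\beta_Y = T\epsilon_Y \circ \Delta_{FY}$: after currying, $\epsilon_Y$ becomes $\Id_{FY}$ and $\Delta_{FY}$ encodes the monad multiplication $\mu^F_X$ (coming from $\Delta_\chf$), so that $\beta_Y^*$ corresponds to the composite $\mu^F_X \circ F(-) = \phi^F_{Y, FX}$. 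A dual unwinding shows that the vertical arrows $\phi^T_{LY, X}, \psi^T_{LY, X}$ on the left (encoding the comodule structures of $X$ and $LY$) correspond under currying to the horizontal arrows on the right, namely $[Y, \phi^T_{\chf, X}]$ and $[Y, \psi^T_{\chf, X}]$ (encoding the equaliser defining $RX$) together with their $[FY, -]$ counterparts.

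Once the two squares are identified, Fubini for limits gives $[LY, X]_T \cong [Y, RX]^F$ as limits of the same $2 \times 2$ diagram, and naturality in $X, Y$ follows since every identification is natural. The main obstacle is the arrow-by-arrow comparison: the pairing exchanges the horizontal and vertical axes of the two squares, reflecting the structural duality between the coequaliser defining $L$ (coming from the $F$-module structure of $Y$ together with the comonad $T$) and the equaliser defining $R$ (coming from the $T$-comodule structure of $X$ together with the comonoid $\chf$). The biclosed hypothesis, already required for Theorem~\ref{thm_L}, is essential here: we need the currying as an isomorphism of $\sC$-objects compatible with the comonoid/monoid structure of $\chf$, not merely as a bijection of hom sets.
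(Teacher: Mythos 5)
Your proposal is correct and follows essentially the same route as the paper: both arguments identify $[LY,X]_T$ and $[Y,RX]^F$ as limits of diagrams that match under the internal adjunction $[TA,B]\cong[A,FB]$, with the key work being the arrow-by-arrow comparison of the equaliser/coequaliser data ($\delta_Y,\beta_Y$ versus $\psi^F,\phi^F$, and the comodule structure maps versus the equaliser defining $R$). The only difference is packaging — you compute one $2\times 2$ double equaliser and invoke Fubini, while the paper first establishes the intermediate isomorphisms $[TX,Y]_T\cong[X,RY]$ and $[LX,Y]\cong[X,FY]^F$ and then exhibits both sides as pullbacks of isomorphic cospans — but these are the same limit sliced in two ways.
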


\subsection{Connection with Kleisli categories}
\label{s1.Kl}
Let $\widetilde{\sC_T}$ and $\widetilde{\sC^F}$ be the Kleisli categories.
These are full subcategories of ${\sC_T}$ and ${\sC^F}$ spanned by
the cofree comodules $TX$ and
the free contramodules $FX$ correspondingly. These categories are isomorphic \cite[2.6]{Bohm}. The isomorphisms are given by
\begin{equation} \label{CCC_Kleisli}
\widetilde{\sC_T} \longleftrightarrow \widetilde{\sC^F}, \ \ \ 
TX\longleftrightarrow FX \, . 
\end{equation}
Notice that $R (TX) \cong FX$
but $L(FX)$ appears to be different from $TX$.

The following question is interesting.
A referee has sketched an approach to it via Kan extensions.

\begin{question}
  What is the relation between the functors $L$ and $R$ and
the category isomorphisms 
\eqref{CCC_Kleisli}?
\end{question}

\section{Examples}
\label{s2e}

Now we examine concrete examples of comodules and contramodules.

\subsection{Complexes of vector spaces}
\label{s1.3}
Let $\sC=\KCh$ be the category of chain complexes over a field $\K$.
The tensor product $X\otimes Y$ is just the tensor product of vector spaces.
The internal hom is
\[ [X,Y] = \bigoplus_{d=-\infty}^{\infty} [X,Y]_d \ \mbox{ where } \ [X,Y]_d \coloneqq \prod_i \hom_{\K} (X_i, Y_{i+d}) \, .\] 
Both are chain complexes.
The unit object $\unit$ is the complex $\K[0]$, concentrated in degree zero. 
The zero degree cycles
yield both the global sections and the hom sets:
$$
\Gamma (X)= Z_0 (X) \qquad \mbox{ and } \qquad \sC(X,Y)= Z_0 ([X,Y]).
$$

A comonoid $C$ in $\sC$ is just a DG-coalgebra $C$. Then 
$$T(X) = X \otimes C, \ \ 
F(X)= [C,X].$$ 
If $(X,\rho_X)$ is a $C$-comodule, we write its structure map in Sweedler's $\Sigma$-notation
$$\rho_X (\vx) = \sum_{(\vx)} \vx_{(0)} \otimes \vx_{(1)} \, ,$$
so that the two maps \eqref{phiT} and  \eqref{psiT} are 
$$
\phi^T_{X,Y} (f) (\vx) = \sum_{(f(\vx))} f(\vx)_{(0)} \otimes f(\vx)_{(1)}, \ \ 
\psi^T_{X,Y} (f) (\vx) = \sum_{(\vx)} f(\vx_{(0)}) \otimes \vx_{(1)}.
$$
It follows that the category $\sC_T$ (as defined in Section~\ref{s1.5}) is isomorphic as an ordinary category to the category of DG-comodules over $C$. 

Consider $C$-contramodules  $(X,\theta_X)$ and $(Y,\theta_Y)$, also known as DG-contramodules over $C$.  
Let us inspect the square
\begin{equation}
  \label{contramodule_square}
\begin{CD}
[C, X]  @>{\theta_X}>> X\\
@VV{f\circ}V @VV{f}V\\
[C, Y] @>{\theta_Y}>> Y
\end{CD}
\end{equation}
that depends on a linear map $f:X \rightarrow Y$.
The left-bottom path of the square is $\phi^F_{X,Y}(f)$ and
the top-right path of the square is $\psi^F_{X,Y}(f)$.
Thus, $[X,Y]^F$ is a complex that consists of those $f\in [X,Y]$
that make \eqref{contramodule_square} commutative. 

Now a linear map $f:X\rightarrow Y$ is a homomorphism of DG-contramodules
if $f$ is a map of complexes (degree zero, commutes with differential)
such that \eqref{contramodule_square} is commutative.
In other words, the homomorphisms are the elements of $Z_0 ([X,Y]^F)$.
By definition, $\sC^F(X,Y)=Z_0 ([X,Y]^F)$.
It follows that $\sC^F$ is isomorphic as an ordinary category to the category of DG-contramodules over $C$. 

The adjoint functors $L$ and $R$ are described by Positselski in this case \cite{Pos2}.
They define an equivalence between the coderived category of $C$-comodules and the contraderived category of $C$-contramodules.

\subsection{Specific coalgebra}
\label{s1.3a}
Let us consider the polynomial coalgebra
$C=\K [z]$, $\Delta (z) = 1\otimes z + z \otimes 1$ as a DG-coalgebra with zero differentials.
Let the degree of $z$ be $d\in\Z$.
A $C$-comodule is a chain complex $V$ with a countable family of chain complex maps $\rho_n :V \rightarrow V[nd], \; n \in \N$ ($V[n]$ is the degree shift of $V$) such that
    \begin{equation} 
    \rho : V \rightarrow V \otimes C, \ \
    \rho (\vv) = \sum_n  \rho_n (\vv) \otimes z^n.
    \end{equation}
    It needs to satisfy the unitality and the associativity conditions 
    \begin{equation} \label{unit_assoc}
    \rho_0 (\vv) =\vv, \ \ 
    \rho_m (\rho_n (\vv)) = \binom{m+n}{n}\rho_{m+n} (\vv),
    \end{equation}
    as well as the finiteness condition
    \begin{equation}
      \label{co_fin}
    \forall \vv\in V \; \exists N \; \forall n>N \; \rho_n (\vv) =0.
    \end{equation}
    In characteristic zero \eqref{unit_assoc} is equivalent to 
     \begin{equation} \label{unit_assoc_0}
      \rho_n  =\frac{1}{n!} \rho_1^n \; (\coloneqq \rho_1^{(n)})
       \end{equation}
    so that a $C$-comodule is just a chain complex with a locally nilpotent chain complex operator $\rho_1$.
    
    Similarly, a $C$-contramodule is a chain complex $X$ with a family
    of chain complex maps $\theta_n :X \rightarrow X[nd], \; n \in \N$  such that
    \begin{equation} \label{KX_contramodule}
    \theta : [C,X] \rightarrow X , \ \
    \theta (f ) = \sum_n  \theta_n (f (z^n)).
    \end{equation}
    The unitality and the associativity conditions
    for $\theta$ are \eqref{unit_assoc}, the same as for $\rho$.
    In characteristic zero, it becomes \eqref{unit_assoc_0}, that is, 
    $\theta_n = \theta_1^{(n)}$ for all $n$.
    The finiteness condition is different:
    since $f (z^n)$ can be an arbitrary sequence of elements of $X$, the condition can be stated as  
    \begin{equation}
      \label{contra_fin}
    \forall \; \mbox{sequence} \; (a_n), \; a_n\in X\; \mbox{the sum} \;
    \sum_n \theta_n (a_n) \; \mbox{is well-defined}.
    \end{equation}
    Such well-definedness may or may not result from series convergence in some topology.
    Positselski \cite[0.2]{Pos3} emphasises this point: in general,
    it is an algebraic infinite summation operation.
    It is convenient to think that a $C$-contramodule is equipped with $(U,s)$ where
    $U$ is a subspace of $X[[t]]$
    and
    $s : U \rightarrow X$  is a linear map
    such that  for all  $f \in [C,X]$
    $$
    \sum_n  \theta_n (f (z^n))t^n\in U
    \ \ \mbox{ and } \ \  
    \theta (f ) = s \big(\sum_n  \theta_n (f (z^n))t^n \big) \, .
    $$
    Let $\K$ be of characteristic zero and $d=0$.
    A contramodule of ``topological'' nature is 
    $\K[[x^{-1}]]$ where $\theta_n = \partial_x^{(n)}$. 
    Algebraically, 
    \begin{equation}\label{ex_contra_1}
    U = \{ \sum_n h_nt^n  \,\mid\, h_n \in (x^{-n}) \vartriangleleft \K[[x^{-1}]] \} \, , \ s  (\sum_n h_nt^n) = \sum_n h_n 
    \end{equation}
    and $s$ is well-defined because the calculation of the coefficient in front of each $x^{-n}$ requires only a finite sum.
    

    A contramodule of  ``non-topological'' nature can be constructed similarly to \cite[A.1.1]{Pos2} and \cite[1.5]{Pos3}.
    Let $\widetilde{Y}$ be a $C$-contramodule of sequences
    $$\widetilde{Y}=\{(a_i)\,\mid\, a_i \in \K[[x^{-1}]]\}\, , \ \ \theta_1 = \partial_x \, .$$
    Its summation operation comes from convergence in the $x^{-1}$-adic topology.
    Algebraically, it is given by the formula~\eqref{ex_contra_1} in each position.  
      It has a subcontramodule of quickly convergent sequences $Y$ and the quotient
    $$Y\coloneqq \{(a_i)\in Y \,\mid\,   x^i a_i \to 0 \}\, , \ \ X\coloneqq \widetilde{Y}/Y \, . $$
      The subcontramodule $Y$ is dense in $\widetilde{Y}$.
    Thus, the induced topology on $X$ is antidiscrete and cannot be used to define the summation operation.
    Yet it can be understood algebraically: 
    $$
    U = \{ \sum_n ((a_{n,i})+Y)t^n  \,\mid\, a_{n,i} \in (x^{-n}) \} \, , \ s  (\sum_n ((a_{n,i})+Y) t^n) = (\sum_n a_{n,i}) + Y \, . 
    $$

\subsection{Comonoids and their comodules in the category of sets} \label{comodules_sets}
Let $\sSets$ be the category of sets.  This category has a closed symmetric monoidal structure where the product of sets defines the monoidal structure, and the unit is a one point set $\{p\}$.  In this category the internal hom and the external hom are the same set which we will denote by $[X,Y]$.  Let $\psi = (\psi_1, \psi_2)  : X \to X\times X$ be a comultiplication. The counitality axiom immediately shows that both $\psi_1$ and $\psi_2$ are the identity and so $\psi$ is equal to the diagonal map $\Delta$.  Thus, any set has a unique comonoid structure.  We will fix a base set $C$ and identify this with the comonoid $(C, \Delta, \varepsilon)$ where $\Delta$ is the diagonal map $C \to C \times C$ and $\varepsilon : C \to \{p\}$ is the unique map.  

By definition, a (right) $C$-comodule structure on a set $C$ is a map $\rho: X \to X \times C$ satisfying the usual coassociativity and counitality conditions.  We will use the usual notation $\sSets_C$ for the category of comodules over the monoid $C$.  The counitality immediately shows that there is a unique map $\phi : X \to C$ such that $\rho = 1 \times \phi : X \to X\times C$.

By definition, a set over $C$ is a pair $(X,\phi)$ where $X$ is a set and $\phi : X \to C$ is a function. A morphism of sets over $C$ is a function $f$ such that the following diagram commutes.
$$
\begin{CD}
  X @>{f}>> Y \\
@V{\phi_X}VV @VV{\phi_Y}V\\
C @>>{1}> C \\ 
\end{CD}
$$
We use the notation $(\sSets \!\downarrow\! C)$ for the category of sets over $C$. If $(X,\phi)$ is a set over $C$ then it defines a right $C$-comodule structure on $X$ by setting $\rho = 1 \times \phi : X \rightarrow X \times C$.  

Evidently the correspondence $\rho \longleftrightarrow 1\times \phi$ gives a bijection between the $C$-comodule structures on $X$ and the $C$-set structures on $X$. We state this as a formal proposition.

\begin{prop} \label{chf_sets_comodules}
The above constructions define an isomorphism from the category $\sSets_C$ to the category $(\sSets \!\downarrow\! C)$.
\end{prop}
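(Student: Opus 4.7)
The plan is to make the correspondence $\rho \leftrightarrow 1 \times \phi$ explicit as a pair of functors and then verify they are mutually inverse on both objects and morphisms. All of the work is bookkeeping; there is no real obstacle.

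First I would define the functor $\Phi : \sSets_C \to (\sSets \!\downarrow\! C)$ by $(X,\rho) \mapsto (X, \pi_2 \circ \rho)$, where $\pi_2 : X \times C \to C$ is the second projection, and identity on underlying maps. In the opposite direction, define $\Psi : (\sSets \!\downarrow\! C) \to \sSets_C$ by $(X, \phi) \mapsto (X, (\Id_X, \phi))$, again identity on underlying maps. The text has already observed (via counitality) that any comodule structure $\rho$ must equal $(\Id_X, \pi_2 \circ \rho)$, so $\Psi \circ \Phi = \Id$ on objects and $\Phi \circ \Psi = \Id$ on objects is immediate from the definition.

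Next I would check that $\Psi$ actually lands in $\sSets_C$, i.e.\ that $\rho := (\Id_X, \phi)$ is coassociative and counital. Counitality is obvious because $(\Id_X \times \epsilon)\rho(x) = (x, p)$, which corresponds to $x$ under $X \times \{p\} \cong X$. For coassociativity, both $(\rho \times \Id_C)\rho(x)$ and $(\Id_X \times \Delta)\rho(x)$ compute to $(x, \phi(x), \phi(x))$, using $\Delta(c) = (c,c)$. So every $\phi : X \to C$ yields a legitimate comodule structure, and conversely every comodule structure arises this way.

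Finally I would verify that the morphism conditions match. A map $f : X \to Y$ is a comodule morphism iff $\rho_Y \circ f = (f \times \Id_C) \circ \rho_X$; writing $\rho_X = (\Id_X, \phi_X)$ and $\rho_Y = (\Id_Y, \phi_Y)$, the left-hand side is $x \mapsto (f(x), \phi_Y(f(x)))$ and the right-hand side is $x \mapsto (f(x), \phi_X(x))$, so equality is equivalent to $\phi_Y \circ f = \phi_X$, i.e.\ $f$ is a morphism in $(\sSets \!\downarrow\! C)$. Hence $\Phi$ and $\Psi$ are bijective on hom-sets, and since identity morphisms and composition are preserved tautologically (both functors are identity on underlying maps), they are mutually inverse functors, giving the claimed isomorphism of categories.
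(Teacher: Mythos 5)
Your proof is correct and follows essentially the same route as the paper, which states the result without a formal proof after observing that counitality forces $\rho = (\Id_X,\phi)$; you simply spell out the coassociativity check and the morphism-level verification that the paper leaves implicit.
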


There is one further point to make.  Let $X = \coprod_{a \in C} X_{a}$ be a disjoint union of a family of sets indexed by $C$.  Then the  set $X$ has a natural map $\phi : X \to C$ defined by
$$\phi(x) = a \ \mbox{ for all } \  x \in X_{a}\, .$$
This, in turn, defines a $C$-comodule structure on $X$.  Then every $C$-comodule is canonically isomorphic to such a disjoint union.

\subsection{Contramodules in the category of sets}
\label{s1.6}
Contramodules over $C$ are a bit more intricate.
By definition, a contramodule over a set $C$ is a set $X$ equipped with a function $\theta : [C, X] \to X$ satisfying the usual contraassociativity and contraunitality conditions.
Now $[C,X] = \prod_{a \in C}X$ and we will sometimes identify a function $\beta : C \to X$ with a list $(\beta (a))_{a \in C}$ of elements in $X$. We can think of $\theta(\beta)$ as the $\theta$-product of the (probably infinite)
list of elements $(\beta(a))_{a \in C}$ in $X$.

Contraunitality tells us that if $f : C \to X$ is the  constant function with value $\vx$, then $\theta(f) = \vx$.  The contraassociativity condition can be rephrased as follows.
Let $\gamma : C \times C \to X$ be a function.
We can think of $\gamma$ as a $C \times C$ matrix with entries in $X$.
The row of $\gamma$ labelled by a fixed element $b \in C$ is the function
$$r_b (\gamma) : C \to X, \ \mbox{ defined by } \ r_{b}(\gamma)(a) = \gamma(b, a).$$
Now we define the {\em row function} of $\gamma$ by
$$
\rho_\gamma : C \to X, \ \ \rho_{\gamma}(a) = \theta(r_{a}(\gamma)).
$$
In other words, $\rho_{\gamma}(a) \in X$ is the $\theta$-product of the elements in the row of the matrix $\gamma$ labelled by $a$.    
We also require the {\em diagonal function} of $\gamma$:
$$\delta_{\gamma} : C \to X, \ \ \delta_{\gamma}(a) = \gamma (a,a).$$
Using these two functions, the contraassociativity condition turns into the equation
\begin{equation} \label{contra_ass_sets}
\theta(\rho_{\gamma}) = \theta(\delta_{\gamma}) \, . 
\end{equation}
We often refer to this equation as the {\em row-diagonal identity}. 

For example, let $C$ be the set $\{1,2\}$.  We identify $\theta$ with a function $\theta : X \times X \to  X$ and write the function $\gamma : C \times C \to X$ in the usual matrix notation
$$
\gamma = \begin{pmatrix}
\vx_{11} & \vx_{12} \\
\vx_{21} & \vx_{22} \\
\end{pmatrix}
$$
In this case, the row-diagonal identity is
\begin{equation}
\theta(\theta(\vx_{11}, \vx_{12}), \theta(\vx_{21}, \vx_{22})) = \theta(\vx_{11}, \vx_{22}).
\end{equation}

\subsection{Product contramodules}
  \label{RX}
Let $Y$ be set over $C$ with a surjective structure map $\phi : Y \to C$.  The set $X = [C,Y]_C$ of sections of $p$ is a non-empty contramodule.   Note that 
$$
Y = \coprod_{a \in C} Y_{a}, \quad X = [C,Y]_C  = \prod_{a \in C} Y_{a}
$$
where $Y_{a} \coloneqq  \phi^{-1}(a) \subseteq Y$. In particular, any product  indexed by $C$ is a contramodule over $C$.  The contramodule structure map $\theta : [C, [C,Y]_C] \to [C,Y]_C$ can be described as follows.
A function $\beta: C \to [C,Y]_C$ is a list 
$\beta =(\beta_{a})_{a \in C}$ of sections of $\phi : X \to C$, i.e.,  $\beta_a\in [C,Y]_C$ for all $a\in C$.  Then $\theta(\beta) \in [C,Y]_C$ is the function
$$
\theta(\beta)(a) = \beta_a (a). 
$$

For example, take $C = \{1,2\}$.  Then the product $Y = Y_1\times Y_2$ equipped with the binary operation
$$
\theta((\vy_1, \vy_2), (\vz_1,\vz_2)) = (\vy_1, \vz_2)
$$
is a contramodule over $\{1,2\}$.  

\subsection{Every contramodule is isomorphic to a product contramodule}
We divide this argument into two steps.  The first is the special case of a contramodule over a set with two elements.  The second is the general case as an adaptation of the special case.

\subsubsection{Contramodules over a set with two elements} \label{s1.6_a}  
Let $X$ be a contramodule over the set $C = \{1,2\}$ with structure map $\theta : X \times X \to X$.  Fix $\vu \in X$ and define $\pi_1, \pi_2 : X \to X$ by
$$
\pi_1(\vx) = \theta(\vx,\vu), \quad \pi_2(\vx) = \theta(\vu,\vx).
$$
Now set
$$
X_1\coloneqq \im(\pi_1) \subseteq X \, , \quad X_2 \coloneqq \im(\pi_2) \subseteq X.
$$
Let us first establish some elementary formulas.  
\begin{lemma} \label{identities}
  The following formulas hold for all $\vx,\vy_1,\vy_2\in X$ and $a,b\in \{1,2\}$.
\begin{enumerate}
\item 
$
  \pi_{a} (\pi_{b}(\vx)) =
  \begin{cases}
  \pi_{a}(\vx)  & \mbox{ if } a=b, \\
\vu & \mbox{ if } a\neq b .
\end{cases}
$
\item
$
\pi_{a}(\theta(\vx_1,\vx_2)) = \pi_{a}(\vx_{a})
$.
\item
If, furthermore,  $(\vx_1, \vx_2) \in X_1 \times X_2$, then
$
\pi_{a}(\theta(\vx_1,\vx_2)) = \vx_{a}
$.
\end{enumerate}
\end{lemma}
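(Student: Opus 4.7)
The plan is to derive all three parts by repeated applications of the row-diagonal identity \eqref{contra_ass_sets} to carefully chosen $2\times 2$ matrices whose entries are either one of the given elements of $X$ or the fixed base element $\vu$. The only other input is the observation, itself immediate from contraunitality applied to the constant $\vu$-valued function, that $\theta(\vu,\vu)=\vu$ in the two-element case.

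First I would prove (1) by handling the four cases $(a,b)\in\{1,2\}^2$ separately. For the diagonal case $a=b=1$, apply \eqref{contra_ass_sets} to
$$\gamma = \begin{pmatrix} \vx & \vu \\ \vu & \vu \end{pmatrix}\,:$$
the row-$\theta$-product is $\theta(\theta(\vx,\vu),\theta(\vu,\vu)) = \pi_1(\pi_1(\vx))$, while the diagonal-$\theta$-product is $\theta(\vx,\vu)=\pi_1(\vx)$; the case $a=b=2$ is symmetric. For the off-diagonal case $a=1$, $b=2$ take
$$\gamma = \begin{pmatrix} \vu & \vx \\ \vu & \vu \end{pmatrix}\,:$$
now the diagonal is constant $\vu$, so \eqref{contra_ass_sets} forces the row-$\theta$-product $\pi_1(\pi_2(\vx))$ to equal $\theta(\vu,\vu)=\vu$; the case $a=2$, $b=1$ is symmetric.

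For (2) with $a=1$ I would apply \eqref{contra_ass_sets} to
$$\gamma = \begin{pmatrix} \vx_1 & \vx_2 \\ \vu & \vu \end{pmatrix},$$
whose row-$\theta$-product is $\theta(\theta(\vx_1,\vx_2),\vu) = \pi_1(\theta(\vx_1,\vx_2))$ and whose diagonal-$\theta$-product is $\theta(\vx_1,\vu)=\pi_1(\vx_1)$. The case $a=2$ uses the analogous matrix with the constant $\vu$ row on top. Finally, (3) is a formal corollary of (1) and (2): if $\vx_a\in X_a = \im(\pi_a)$, write $\vx_a=\pi_a(\vy_a)$; part (1) with $a=b$ yields $\pi_a(\vx_a)=\pi_a(\pi_a(\vy_a))=\pi_a(\vy_a)=\vx_a$, and substituting this into part (2) gives $\pi_a(\theta(\vx_1,\vx_2))=\pi_a(\vx_a)=\vx_a$.

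There is no real obstacle here: the lemma amounts to matching $2\times 2$ patterns so that one of the two sides of the row-diagonal identity collapses to $\vu$ via $\theta(\vu,\vu)=\vu$. The only care needed is the bookkeeping across the eight sub-cases and choosing each auxiliary matrix so that the row or column not carrying information is the constant $\vu$.
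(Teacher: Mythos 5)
Your proof is correct and follows essentially the same route as the paper: each identity is obtained by applying the row--diagonal identity to a $2\times 2$ matrix with $\vx$ (or the pair $\vx_1,\vx_2$) placed so that the uninformative row or diagonal collapses to $\vu$ by contraunitality, and (3) is deduced formally from (1) and (2). Your explicit verification that $\pi_a(\vx_a)=\vx_a$ for $\vx_a\in\im(\pi_a)$ fills in the one step the paper leaves implicit.
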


\begin{proof}
The row-diagonal identity, combined with the unitality condition applied to the matrix
$$
\begin{pmatrix}
\vx & \vu \\
\vu & \vu \\
\end{pmatrix}
$$
gives the formula $\pi_1\pi_1(\vx) = \pi_1(\vx)$. The same argument using the matrix
$$
\begin{pmatrix}
\vu & \vu\\
\vx & \vu \\
\end{pmatrix}
$$
gives the formula $\pi_2\pi_1(\vx) = u$.  The other formulas follow in a similar fashion.  This proves (1). 

The proof of (2)  when $a =1$  (or $a=2$) is a similar argument using the matrix
$$
\begin{pmatrix}
\vx_1 & \vx_2 \\
\vu & \vu\\
\end{pmatrix}
\qquad \mbox{(correspondingly } \ 
\begin{pmatrix}
\vu & \vu\\
\vx_1 & \vx_2 \\
\end{pmatrix}
\ \mbox{).}
$$

Finally, (3) follows directly from (1) and (2).
\end{proof}

Let us define
$$
\pi = (\pi_1, \pi_2) : X \to X_1 \times X_2
$$
to be the map with components $\pi_1$, $\pi_2$ and
$$
\sigma = \theta\mid_{X_1 \times X_2} : X_1 \times X_2 \to X
$$
to be the restriction of $\theta : X \times X \to X$ to $X_1 \times X_2 \subseteq X \times X$.

\begin{lemma}
The maps $\pi$ and $\sigma$ are inverse isomorphisms of contramodules. 
\end{lemma}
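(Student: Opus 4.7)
The plan is to split the argument into two parts: first check that $\pi$ and $\sigma$ are mutually inverse bijections of sets, then verify that both intertwine the contramodule structures.

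For the bijectivity, the identity $\pi \circ \sigma = \Id$ follows immediately from part (3) of Lemma~\ref{identities}: for any $(\vx_1, \vx_2) \in X_1 \times X_2$ we have $\pi(\sigma(\vx_1, \vx_2)) = (\pi_1(\theta(\vx_1, \vx_2)), \pi_2(\theta(\vx_1, \vx_2))) = (\vx_1, \vx_2)$. For the reverse identity $\sigma \circ \pi = \Id$, I would apply the row-diagonal identity~\eqref{contra_ass_sets} to the matrix
$$\gamma = \begin{pmatrix} \vx & \vu \\ \vu & \vx \end{pmatrix}.$$
Its row function is $(\pi_1(\vx), \pi_2(\vx))$ and its diagonal is the constant function with value $\vx$; contraunitality forces $\theta$ of the diagonal to equal $\vx$, so the row-diagonal identity yields $\sigma(\pi(\vx)) = \theta(\pi_1(\vx), \pi_2(\vx)) = \vx$.

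For the compatibility with structure maps, I would use the product contramodule structure on $X_1 \times X_2$ described in Section~\ref{RX}, which for $C = \{1, 2\}$ takes the form $\theta_{X_1 \times X_2}((\vy_1, \vy_2), (\vz_1, \vz_2)) = (\vy_1, \vz_2)$. That $\pi$ is a contramodule map then reduces, via part (2) of Lemma~\ref{identities}, to the direct computation $\pi(\theta(\vx, \vy)) = (\pi_1(\vx), \pi_2(\vy)) = \theta_{X_1 \times X_2}(\pi(\vx), \pi(\vy))$.

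For $\sigma$, I would invoke the row-diagonal identity once more, this time applied to the matrix $\gamma = \begin{pmatrix} \vy_1 & \vy_2 \\ \vz_1 & \vz_2 \end{pmatrix}$ with $(\vy_1, \vy_2), (\vz_1, \vz_2) \in X_1 \times X_2$. Its row and diagonal functions immediately produce the equation $\theta(\theta(\vy_1, \vy_2), \theta(\vz_1, \vz_2)) = \theta(\vy_1, \vz_2)$, which is exactly the required compatibility $\theta(\sigma(\vy_1, \vy_2), \sigma(\vz_1, \vz_2)) = \sigma(\theta_{X_1 \times X_2}((\vy_1, \vy_2), (\vz_1, \vz_2)))$. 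I do not foresee any serious obstacle: once the correct ambient contramodule structure on the product is in hand, each half of the argument is a single application of the row-diagonal identity combined with Lemma~\ref{identities}; the only real care required is keeping track of the matrix convention and the order of factors in $\theta$.
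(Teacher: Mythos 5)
Your proof is correct and follows essentially the same route as the paper: the same two matrices, the same applications of the row-diagonal identity and of Lemma~\ref{identities}, and the same use of the product contramodule structure from Section~\ref{RX}. The only difference is that you also verify directly that $\sigma$ respects the structure maps, which the paper leaves implicit since $\sigma$ is the inverse of the contramodule map $\pi$.
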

\begin{proof}
Observe that for each $\vx\in X$
\item
$$
\sigma\pi(\vx)  = \theta(\theta(\vx,\vu), \theta(\vu,\vx)) \, . 
$$
The row-diagonal identity applied to the matrix
$$
\begin{pmatrix}
\vx & \vu  \\
\vu & \vx  \\
 \end{pmatrix}
$$
shows that $\theta(\theta(\vx,\vu), \theta(\vu,\vx)) = \theta(\vx,\vx)$ and the unitality condition yields that $\theta(\vx,\vx) = \vx$.  Therefore, $\sigma\pi$ is the identity.

Next for $\vx_1 \in X_1$ and $\vx_2 \in X_2$, Lemma~\ref{identities} ensures that
$$
\pi_1\sigma(\vx_1,\vx_2) =\vx_1, \quad \pi_2\sigma(\vx_1,\vx_2) =\vx_2.
$$
Therefore, $\pi\sigma$ is also the identity. 

Finally,  a simple argument using part~(3) of Lemma~\ref{identities} and the formula for the structure map of $X_1 \times X_2$ shows that $\pi$ is a map of contramodules.
\end{proof} 

\subsubsection{The general case}
The argument in the general case is exactly the same as in the case where $C$ has two elements except that we must replace $2 \times 2$ matrices by the appropriate $C \times C$ matrices.  So let $X$ be a contramodule over $C$.  Fix a point $\vu \in X$.   For each $a \in C$ and $\vx \in X$ define
$$
\delta_{a, \vx} : C   \to X
\ \mbox{ by } \ 
\delta_{a, \vx}(b) =   
  \begin{cases}
  \vx  & \mbox{ if } a=b, \\
\vu & \mbox{ if } a\neq b .
  \end{cases}
$$
Now define $\pi_{a} : X \to X$ by
$$ 
\pi_{a}(\vx) = \theta(\delta_{a, \vx})
$$
and set $X_{a} \coloneqq \im(\pi_{a}) \subseteq X$.
The following elementary formulas is a version of Lemma~\ref{identities} for the general case.  
\begin{lemma}\label{identities_gen} 
The following formulas hold for all $\vx\in X$, $\beta\in [C,X]$ and $a,b\in C$.
\begin{enumerate}
\item 
$  \pi_{a} (\pi_{b}(\vx)) =
  \begin{cases}
  \pi_{a}(\vx)  & \mbox{ if } a=b, \\
\vu & \mbox{ if } a\neq b .
\end{cases}
$
\item
$
\pi_{a}(\theta(\beta)) = \pi_{a}(\beta (a))
$.
\item
If, furthermore,  $\beta (a) \in X_a$ for all $a\in C$, then
$
\pi_{a}(\theta(\beta)) = \beta (a)
$.
\end{enumerate}
\end{lemma}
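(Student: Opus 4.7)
The plan is to imitate the proof of Lemma~\ref{identities} from the two-element case, replacing $2\times 2$ matrices with appropriately chosen functions $\gamma : C \times C \to X$ and applying the row-diagonal identity~\eqref{contra_ass_sets}. The heart of the argument is item~(2); items~(1) and (3) will drop out of (2), the contraunitality axiom, and the definitions of $\pi_a$ and $X_a$.

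First I would prove~(2). Fix $a \in C$ and $\beta \in [C,X]$, and define $\gamma : C \times C \to X$ by
$\gamma(c_1, c_2) = \beta(c_2)$ if $c_1 = a$ and $\gamma(c_1, c_2) = \vu$ otherwise.
A direct inspection of the definitions in Section~\ref{s1.6} shows that the row function equals $\rho_\gamma = \delta_{a,\,\theta(\beta)}$: the row indexed by $a$ is $\beta$, so $\rho_\gamma(a) = \theta(\beta)$, while every other row is the constant function with value $\vu$ and therefore maps under $\theta$ to $\vu$ by contraunitality. Similarly the diagonal function is $\delta_\gamma = \delta_{a,\,\beta(a)}$, since $\gamma(a,a) = \beta(a)$ and $\gamma(c,c) = \vu$ for $c \neq a$. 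Applying the row-diagonal identity gives $\theta(\delta_{a,\,\theta(\beta)}) = \theta(\delta_{a,\,\beta(a)})$, which by the very definition of $\pi_a$ is $\pi_a(\theta(\beta)) = \pi_a(\beta(a))$.

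From~(2), item~(1) is immediate: taking $\beta = \delta_{b,\vx}$ yields $\pi_a(\pi_b(\vx)) = \pi_a(\delta_{b,\vx}(a))$, which equals $\pi_a(\vx)$ when $a = b$ and $\pi_a(\vu)$ when $a \neq b$. The latter is $\vu$, since $\delta_{a,\vu}$ is the constant function with value $\vu$, and contraunitality gives $\theta(\delta_{a,\vu}) = \vu$. Item~(3) follows similarly: if $\beta(a) = \pi_a(\vy_a) \in X_a$ for every $a$, then
$\pi_a(\theta(\beta)) = \pi_a(\beta(a)) = \pi_a(\pi_a(\vy_a)) = \pi_a(\vy_a) = \beta(a)$,
using~(2), the $a=b$ case of~(1), and the definition of $\vy_a$.

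The only non-routine ingredient is choosing the correct $\gamma$ for~(2); once that is in hand the rest is bookkeeping with the definitions. A potential conceptual worry is that the index set $C$ may be arbitrarily large, but the row-diagonal identity~\eqref{contra_ass_sets} is postulated for \emph{every} $\gamma : C\times C \to X$, so cardinality plays no role in the argument.
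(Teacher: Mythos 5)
Your proof is correct and takes essentially the same route as the paper: the matrix you choose for part~(2) is exactly the one the paper uses (all entries $\vu$ except row $a$, which is $\beta$), and the row-diagonal identity together with contraunitality does all the work. The only (harmless) difference is that you obtain~(1) as the special case $\beta=\delta_{b,\vx}$ of~(2), whereas the paper proves~(1) directly with its own matrices; both arguments are sound.
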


\begin{proof}
  The proofs follow by writing down the $C\times C$ matrices which are the obvious analogues of the $2\times 2$ matrices in Lemma~\ref{identities}.
  We will write down these general matrices and leave the argument using the unitality and the row-diagonal identities to the reader. 
\begin{enumerate}
\item
  To compute $\pi_{a} (\pi_{a}(\vx))$ we use the matrix $(\vz_{c,d})$ with $\vz_{a,a} = \vx$ and all other entries equal to $\vu$.
  To compute $\pi_{a} (\pi_{b}(\vx))$ we use the matrix $(\vz_{c,d})$ with $\vz_{a,b} = \vx$ and all other entries equal to $\vu$.
\item
To prove (2) we use the $C \times C$ matrix $(\vz_{c,d})$ with all entries $\vu$ except in the row labelled $a$.  In this row $\vz_{a,b} = \beta (b)$.
\end{enumerate}
\item
Finally, to prove (3), note that since $\beta (a) \in X_{a}$ it follows that $ \beta(a) = \pi_{a}(\vz)$ for some $\vz \in X$.  The formula follows from (1) and (2).
\end{proof}

As above we write 
$$
\pi = (\pi_a):  X \to \prod_{a \in C} X_{a}
$$
for the map with components $\pi_{a}$ and
$$
\sigma = \theta\mid_{\prod_{a \in C} X_{a}}: \prod_{a \in C} X_{a} \to X
$$
for the restriction of the contramodule structure map $\theta :  \prod_{\alpha \in C} X \to X$.  In terms of functions, this subset of $[C,X] = \prod_{a \in C}X$ corresponds to the set of such functions $\beta : C \to X$
that $\beta (a) \in X_{a}$. 

\begin{thm}
\label{any_contraset}
The maps $\pi$ and $\sigma$ are inverse isomorphisms of contramodules.
\end{thm}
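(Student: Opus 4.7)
The plan is to follow the two-element case of Section~\ref{s1.6_a} line-by-line, substituting the elementary $2\times 2$ matrix computations by their $C\times C$ analogues, and to invoke Lemma~\ref{identities_gen} wherever possible. So the argument splits into three steps: show $\sigma\pi = \Id_X$, show $\pi\sigma = \Id_{\prod_{a\in C} X_a}$, and verify that one of the two mutually inverse maps (say $\sigma$) is a contramodule homomorphism.

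The heart of the matter is showing $\sigma\pi = \Id_X$. Fix $\vx\in X$ and consider the $C\times C$ matrix $\gamma : C\times C\to X$ defined by $\gamma(a,b) = \delta_{a,\vx}(b)$, i.e.\ $\gamma(a,a) = \vx$ and $\gamma(a,b)=\vu$ for $a\neq b$. Then by construction the row function is $\rho_\gamma(a)=\theta(\delta_{a,\vx})=\pi_a(\vx)$, so $\theta(\rho_\gamma) = \theta\big((\pi_a(\vx))_{a\in C}\big) = \sigma\pi(\vx)$. On the other hand, the diagonal function $\delta_\gamma$ is the constant function with value $\vx$, and contraunitality gives $\theta(\delta_\gamma)=\vx$. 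The row-diagonal identity~\eqref{contra_ass_sets} then yields $\sigma\pi(\vx)=\vx$. The converse identity $\pi\sigma = \Id$ follows immediately from part~(3) of Lemma~\ref{identities_gen}: for $\beta\in\prod_{a\in C} X_a$ we have $\pi_a(\sigma(\beta)) = \pi_a(\theta(\beta)) = \beta(a)$ for every $a\in C$.

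It remains to show $\sigma$ (and hence its inverse $\pi$) intertwines the contramodule structure maps. Recall from Section~\ref{RX} that the structure map $\theta'$ on $\prod_{a\in C} X_a$ sends a function $\beta: C \to \prod_{a\in C} X_a$, viewed as a family $(\beta_a)_{a\in C}$ with $\beta_a\in \prod_{c\in C} X_c$, to the function $a\mapsto \beta_a(a)$. We must verify $\sigma(\theta'(\beta)) = \theta((\sigma\circ\beta))$, where the composite $\sigma\circ\beta : C\to X$ sends $a$ to $\theta(\beta_a)\in X$. Form the $C\times C$ matrix $\gamma(a,b) = \beta_a(b)\in X_b\subseteq X$; the row function satisfies $\rho_\gamma(a)=\theta(\beta_a)$, so $\theta(\rho_\gamma)=\theta(\sigma\circ\beta)$, while the diagonal function satisfies $\delta_\gamma(a)=\beta_a(a)=\theta'(\beta)(a)$, so $\theta(\delta_\gamma)=\sigma(\theta'(\beta))$. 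The row-diagonal identity delivers the required equality.

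The main obstacle is bookkeeping: the product contramodule structure on $\prod_{a\in C} X_a$ and the natural inclusion into $[C,X]=\prod_{a\in C} X$ must be kept carefully distinct, and in the third step one must ensure that the matrix $\gamma$ actually has its $b$-th column in $X_b$ (so that $\sigma$ is applicable to its rows); this is automatic once we note that each $\beta_a$ is, by hypothesis, a section of the family $(X_a)_{a\in C}$. Once this is clear, the three computations above constitute the complete proof.
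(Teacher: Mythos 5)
Your proposal is correct and follows essentially the same route as the paper: the same $C\times C$ matrix $\gamma(a,b)=\delta_{a,\vx}(b)$ together with the row-diagonal identity gives $\sigma\pi=\Id$, and part~(3) of Lemma~\ref{identities_gen} gives $\pi\sigma=\Id$. The only difference is that you verify the homomorphism property of $\sigma$ by an explicit second matrix computation, where the paper simply cites Lemma~\ref{identities_gen} for the equivalent statement about $\pi$; your version is a harmless (and arguably welcome) expansion of that step.
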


\begin{proof}
It follows immediately from the definitions of $\pi$ and $\sigma$ that
$$
\sigma(\pi(\vx)) = \theta(\theta ((\delta_{a, \vx})_{a \in C})) \, . 
$$
To compute this by the row-diagonal identity, we introduce the $C\times C$ matrix $\gamma=(\vz_{ab})$ defined by
$$
\vz_{aa}= \vx, \quad \vz_{ab} = \vu \quad\text{if $a \neq b$.}
$$
The row labelled by $a$ of $\gamma$ is $r_a(\gamma) = \delta_{a, \vx}$ and the corresponding row function 
is precisely
$$
\rho_\gamma : C\to X, \ 
\rho_\gamma (a) = \theta(\delta_{a, \vx}) = \pi_{a}(\vx).
$$
The diagonal entries of $\gamma$ are all equal to $\vx$ and, by the unitality condition, it follows that $\theta(\delta_{\gamma}) = \vx$.
Now the row-diagonal identity inplies that $\sigma\pi$ is the identity: 
$$
\vx = \theta(\delta_{\gamma}) = \theta(\rho_{\gamma}) = \theta(\theta ((\delta_{a, \vx})_{a \in C})) = \sigma(\pi(x)).
$$

The facts that $\pi$ is a map of contramodules and that $\pi\sigma$ is the identity follow directly from Lemma~\ref{identities_gen}. 
\end{proof}

\subsection{The co-contra correspondence in the category of sets}
\label{s1.6_b}
We now consider the functors $R : \sSets_C \to \sSets^C$ and $L : \sSets^C \to \sSets_C$. If $X$ is a comodule over $C$, $R(X) = [C,X]_C$ is the set of sections of the structure map $\phi : X \to C$.
We will say that a comodule over $C$ is {\em degenerate} if the structure map $\phi$ is not surjective.
  Notice that $R(X)$ is the empty set, if $X$ is degenerate.
By Theorem~\ref{any_contraset}, every non-empty contramodule over $C$ is isomorphic to $R(X)$ for some $C$-comodule $X$.  

The functor $L$ is more intricate.  Given $Y$, a contramodule over $C$, we have two maps
$$
\eta,\nu \ : [C,Y] \times C \to Y \times C
$$
$$
\eta( \beta, a) = (\beta(a), a), \quad \nu(\beta,a) = (\theta(\beta), a).
$$
Then $L(Y)$ is the coequaliser of these two maps.  In this section we prove the following theorem.  

\begin{thm}  \label{set_CCC}
Let $C$ be a set. The functors $L$ and $R$  are quasi-inverse equivalences between the category of non-degenerate $C$-comodules and the category of non-empty $C$-contramodules.   
\end{thm}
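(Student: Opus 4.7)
The plan is to use the classifications of $C$-comodules and $C$-contramodules as disjoint unions and products (respectively) to reduce the equivalence to a purely combinatorial statement, and then to verify directly that the functors $L$ and $R$ realise the tautological bijection between these presentations. By Proposition~\ref{chf_sets_comodules} every non-degenerate $C$-comodule is canonically of the form $\coprod_{a \in C} X_a$ with every $X_a$ non-empty, and by Theorem~\ref{any_contraset} every non-empty $C$-contramodule is canonically of the form $\prod_{a \in C} V_a$ with every $V_a$ non-empty. For $R$, I would observe that a comodule morphism $C \to X$ (with $C$ carrying its diagonal structure) is precisely a section of the structure map $\phi_X: X \to C$, so $R(X) = [C, X]_C \cong \prod_{a \in C} X_a$ carries exactly the product-contramodule structure of Section~\ref{RX}, and this is non-empty iff $X$ is non-degenerate.

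For $L$, the two maps $\eta, \nu: [C, Y] \times C \to Y \times C$ both preserve the $C$-coordinate, so their coequaliser decomposes as $\coprod_{a \in C} (Y/{\sim_a})$, where $\sim_a$ is the smallest equivalence relation satisfying $\beta(a) \sim_a \theta(\beta)$ for every $\beta \in [C, Y]$. When $Y = \prod_c V_c$, the formula $\theta(\beta)(a) = \beta(a)(a)$ shows that $y \sim_a y'$ forces $y(a) = y'(a)$; conversely, given $y, y' \in Y$ with $y(a) = y'(a)$, I would define $\beta(a) = y$ and, for $c \neq a$, take $\beta(c)$ to be any element of $Y$ with $c$-th coordinate $y'(c)$ (such an element exists because every $V_d$ is non-empty). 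A direct check gives $\theta(\beta) = y'$, so $y \sim_a y'$. Thus $Y/{\sim_a} \cong V_a$ and $L(Y) \cong \coprod_{a \in C} V_a$ as a non-degenerate comodule over $C$. From these formulas the isomorphisms $L \circ R \cong \Id$ and $R \circ L \cong \Id$ are immediate, and a morphism $\prod_c V_c \to \prod_c V'_c$ of contramodules is forced by the same $\beta$-construction to be precisely a family $(g_c: V_c \to V'_c)_{c \in C}$ acting componentwise, matching morphisms of comodules under $L$ and $R$.

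I expect the main obstacle to be the computation of the equivalence relation $\sim_a$ defining $L(Y)$. The containment $\sim_a\ \subseteq\ \{(y, y'): y(a) = y'(a)\}$ is just the formula for $\theta$, but the reverse containment, as well as the morphism classification, requires producing a $\beta \in [C, Y]$ with a prescribed diagonal; this is exactly the step that uses non-emptiness of every $V_c$. Without this hypothesis the argument collapses, explaining the restriction to non-degenerate comodules and non-empty contramodules in the statement.
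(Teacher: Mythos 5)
Your proposal is correct and follows essentially the same route as the paper: the heart of both arguments is the identification of the coequaliser relation defining $L(Y)$ with ``agreement of the $a$-th coordinate,'' proved by constructing a function $\beta$ (the paper's $\psi$ in Lemma~\ref{LR_description}) with prescribed value at $a$ and prescribed diagonal elsewhere, all resting on the classification in Theorem~\ref{any_contraset}. The only (cosmetic) difference is that you finish by classifying contramodule morphisms componentwise, whereas the paper deduces that the unit is an isomorphism from naturality and the already-established adjunction $(L\dashv R)$; either way the remaining checks are routine.
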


The coequaliser in the definition of $L(Y)$ is the the quotient of $Y \times C$ by an equivalence relation.  The key to the proof of the theorem is to understand this equivalence relation in the case where $Y = R(X) = [C,X]_C$.
Let $\sim$ be the equivalence relation on $[C,X]_C\times C$ defined as follows:  
\begin{equation} \label{relaion_sim}
(\beta,a) \sim (\gamma,b) \Longleftrightarrow a = b \ \mbox{ and } \ \beta(a) = \gamma(b).
\end{equation}

\begin{lemma} \label{LR_description}
Let $X$ be a comodule over $C$.  Then
$$
L(R(X) ) = ([C,X]_C\times C)/\sim.
$$
\end{lemma}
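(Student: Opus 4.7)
The plan is to unpack the construction of $L(R(X))$ as an explicit coequaliser on $\sSets$, recognise its generating relation as a special case of $\sim$, and then show that $\sim$ is actually generated by these relations so that the equivalence relations coincide.

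First, by the definition of $L$, the set $L(R(X))$ is the coequaliser of the two maps $\eta, \nu : [C, R(X)] \times C \to R(X) \times C$ sending $(\beta, a)$ to $(\beta(a), a)$ and $(\theta(\beta), a)$ respectively. Coequalisers in $\sSets$ are computed as quotients by the smallest equivalence relation generated by the images, so $L(R(X)) = (R(X) \times C)/\approx$, where $\approx$ is generated by the relations $(\beta(a), a) \approx (\theta(\beta), a)$ for $\beta \in [C, R(X)]$ and $a \in C$. The key step is to show that $\approx$ coincides with $\sim$ on $R(X) \times C = [C,X]_C \times C$.

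For the containment $\approx \;\subseteq\; \sim$, observe that the contramodule structure on $R(X) = [C,X]_C = \prod_{a \in C} X_a$ described in Section~\ref{RX} gives $\theta(\beta)(a) = \beta(a)(a)$ for all $a \in C$. Thus for each generator $(\beta(a), a) \approx (\theta(\beta), a)$, the two sections $\beta(a)$ and $\theta(\beta)$ agree at $a$, which is exactly the condition for them to be $\sim$-related. Since $\sim$ is already an equivalence relation, the generated relation $\approx$ is contained in $\sim$.

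For the reverse containment $\sim \;\subseteq\; \approx$, suppose $s, t \in R(X)$ satisfy $s(a) = t(a)$ for some $a \in C$; we must produce a single coequaliser generator identifying $(s,a)$ with $(t,a)$. Define $\beta : C \to R(X)$ by $\beta(a) = s$ and $\beta(b) = t$ for $b \neq a$. Computing pointwise, $\theta(\beta)(a) = \beta(a)(a) = s(a) = t(a)$, and for $b \neq a$ we have $\theta(\beta)(b) = \beta(b)(b) = t(b)$. Hence $\theta(\beta) = t$, and the coequaliser generator gives $(s, a) = (\beta(a), a) \approx (\theta(\beta), a) = (t, a)$, as required. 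The main step to get right is this explicit construction of $\beta$: it is essentially trivial once one uses the product description of $R(X)$ from Section~\ref{RX}, but it relies crucially on the formula $\theta(\beta)(a) = \beta(a)(a)$ and on the freedom to prescribe $\beta(b)$ arbitrarily in $R(X)$ for $b \neq a$.
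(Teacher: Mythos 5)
Your proposal is correct and follows essentially the same route as the paper: both identify the coequaliser as a quotient by the relation generated by $(\psi(a),a)\approx(\theta(\psi),a)$, use the formula $\theta(\psi)(a)=\psi(a)(a)$ for the product contramodule structure on $R(X)$ to get $\approx\,\subseteq\,\sim$, and prove the reverse inclusion by the same explicit construction $\psi(a)=s$, $\psi(b)=t$ for $b\neq a$. Your pointwise verification that $\theta(\psi)=t$ is slightly more explicit than the paper's, but the argument is the same.
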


\begin{proof}
  From the definition of the coequaliser, $L(R(X))$ is the quotient of \newline
  $[C,X]_C  \times C$ by the equivalence relation generated by the binary relation $\approx$.  The relation $\approx$ is defined by one of the following equivalent three statements:
\begin{enumerate} 
\item $(\beta,a)\approx (\gamma,b)$,
\item there exist $\psi : C \to [C,X]_C$ and $c \in C$ such that $\eta(\psi,c) = (\beta,a)$ and $\nu(\psi,c) = (\gamma, b)$,
\item $a = b$ and there is a function $\psi: C \to [C,X]_C$ such that for all $c \in C$, $\psi(a)(c) = \beta(c)$ and $\psi(c)(c) = \gamma(c)$. 
\end{enumerate}
The lemma immediately follows from the equivalence
\begin{equation} \label{2rel_eq}
(\beta,a)\approx(\gamma,b) \Longleftrightarrow (\beta,a) \sim (\gamma,b)
\end{equation}
that we are going to establish in the rest of this proof. 
The statement~(3) from the list above tells us that
$$
(\beta,a)\approx(\gamma,b) \ \implies \ \beta(a) = \psi(a)(a) = \gamma(a).
$$
This proves the direct implication in~\eqref{2rel_eq}. 
To prove the reverse implication, pick $\beta,\gamma \in [C,X]_C$ such that $\beta(a) = \gamma(a)$.  Define $\psi : C \to [C,X]_C$ by
$$
\psi (a) = \beta, \quad \psi (b)=  \gamma  \quad\text{if $a \neq b$}.
$$
Since $\beta,\gamma \in [C,X]_C$ it is clear that $\psi (c) \in [C,X]_C$ for all $c \in C$.  The statement~(3) from the list implies that
$(\beta,a)\approx(\gamma,a)$. This completes the proof.
\end{proof}

Now we prove  Theorem~\ref{set_CCC}. 
\begin{proof}  
Let $X$ be a non-degenerate $C$-comodule.  Consider the map
$$
\varpi_X : [C,X]_C\times C \to X, \quad \varpi_X (\beta,a) = \beta(a).
$$
This map is surjective. 
Therefore, using the relation~\eqref{relaion_sim}, we conclude that the quotient map 
$$
\overline{\varpi}_X : \left([C,X]_C\times C\right)/_{\sim} \longrightarrow  X
$$
is an isomorphism.  By Lemma~\ref{LR_description}, 
$$
L(R(X)) = \left([C,X]_C\times C\right)/\sim
$$
and so we get a natural isomorphism
$$
L(R(X)) \to X
$$
It is not difficult to check that this natural isomorphism is the counit of the adjunction $(L\dashv R)$.

Now let $Y$ be a $C$-contramodule.  We have a natural transformation $Y\to R(L(Y))$, the unit of the adjunction.
Choose a $C$-comodule $X$ and an isomorphism $\varphi : R(X) \to Y$.  This gives a commutative diagram
$$
\begin{CD}
R(X) @>>> R(L(RX))   \\
@VV{\varphi}V @VV{RL(\varphi)}V \\
Y @>>> R(L(Y)) 
\end{CD}
$$
The top horizontal arrow is an isomorphism as are the two vertical arrows.  This proves that the map $Y\to R(L(Y))$ is also an isomorphism.
\end{proof}

\subsection{Simplicial sets}
\label{SSets_1}
Let $\sS$ be the category of simplicial sets. This is a cartesian category, that is, the monoidal product is the categorical product.
It is a closed symmetric monoidal category:
$$
(X\times Y)_n = X_n \times Y_n, \ \ \
[Y,X]_n = \sS (Y \times \Delta[n] ,X) 
$$ 
at each level $n$, where $\Delta[n]\in\sS$ is the standard $n$-simplex.
As in the start of Section~\ref{s1.6}, a comonoid in $\sC$ is a simplicial set $C=(C_n)$ with the diagonal map $C \rightarrow C \times C$.

Similarly to  \eqref{slice_isomorphic} and Proposition~\ref{chf_sets_comodules}, $\sS_T$ is isomorphic to the overcategory 
$(\sS \!\downarrow\! C)$ (c.f. \cite{Hir2}). Thus, a $C$-comodule $M=(M_n)$ is a simplicial set with a $C_n$-set structure $\phi_n : M_n\rightarrow C_n$ at each level $n$. The compatibility condition is commutation of $\phi$ with the simplicial set structure maps:
$$ \phi_n \circ M(f) = C(f) \circ \phi_m
$$
for all non-decreasing functions $f:[n] \rightarrow [m]$.
Here by $[n]$ we denote the ordered set $\{0,\ldots, n\}$. 

Let us briefly examine a $C$-contramodule $(X=(X_n),\theta)$. Its structure map
$\theta =(\theta_n) \in \sS ([C,X], X)$
consists of functions for each $n$
$$
\theta_n : [C,X]_n = \sS (C \times \Delta[n] ,X) \rightarrow X_n
$$
satisfying the contraassociativity and contraunitality conditions.
%

\section{Deferred Proofs}
\label{s_appendix}

\subsection{Enriched and ordinary (co)equalisers}
\label{s1.4ax}
Let $\sC$ be a closed symmetric monoidal category. 
Both $\sC$ and $\sC^{op}$ are enriched in $\sC$:
\[ [X,Y]_{\sC} =  [X,Y]\, , \ \ \ [X,Y]_{\sC^{op}} =  [Y,X] \, . \]
The equaliser of a pair $f,g:X\rightrightarrows Y$
represents a functor
$$
\sC^{op}\rightarrow \sSets, \ \
Z \mapsto \eq ( f\circ ,g\circ :\sC(Z,X)\rightrightarrows \sC(Z,Y) ).
$$
Similarly, {\em the enriched equaliser} of this pair is a map $h:K\rightarrow X$
such that the functor 
$$
E: \sC^{op}\rightarrow \sC, \ \
Z \mapsto \eq ( \widetilde{f}_Z ,\widetilde{g}_Z : [Z,X]\rightrightarrows [Z,Y] ).
$$
is represented by $K$ with the natural isomorphism $[-,K] \rightarrow E$ given by the evaluation $\widetilde{h}_{-}$ .
Dually, {\em the enriched coequaliser} of the pair  $f,g:X\rightrightarrows Y$
is a map $d:Y\rightarrow K$
such the functor 
$$
F: \sC\rightarrow \sC, \ \
Z \mapsto \eq ( {}_{Z}\widetilde{f} ,{}_{Z}\widetilde{g} : [Y,Z]\rightrightarrows [X,Z] ), 
$$
where ${}_{Z}\widetilde{f}$ and ${}_{Z}\widetilde{g}$ are evaluations on the other side, 
is represented by $K$ with the natural isomorphism $[K,-] \rightarrow F$ is given by the evaluation ${}_{-}\widetilde{d}$.
\begin{lemma}
  An equaliser is an enriched equaliser.
  A coequaliser is an enriched coequaliser. 
\end{lemma}
\begin{proof} Suppose $h:K\rightarrow X$ is the equaliser of a pair $f,g:X\rightrightarrows Y$.
  The functor $[Z,-]$ preserves limits because it is a right adjoint.
  Thus, $[Z,K]$ is the  equaliser of the pair  $\widetilde{f}_Z ,\widetilde{g}_Z : [Z,X]\rightrightarrows [Z,Y]$.
  Hence, $h:K\rightarrow X$ is the enriched equaliser. 
  The proof for coequalisers is similar.
\end{proof}

\subsection{Functor from comodules to contramodules}
\label{s1.6a_R}
We prove Proposition~\ref{thm_R} in this section.

The map $\phi^T_{C,X}: FX \rightarrow FTX$ in \eqref{phiT}
is a homomorphism of free contramodules. So is the map in \eqref{psiT}.
This becomes clear after rewriting it using the adjunctions
\begin{equation} \label{psiTCX}
\psi^T_{C,X} : FX=[C,X] \longrightarrow F^2TX  \longrightarrow FTX.
\end{equation}
Since the forgetful functor $G^F: \sC^F \to \sC$ is left adjoint, it preserves limits.
Thus, $RX$ is the equaliser in $\sC^F$ and a contramodule.

We need to show that $R$ is an enriched functor. Let $\sD (\sC^F)$ be the category
of diagrams
\begin{equation}\label{cat_diag}
  \Psi: \sJ  \longrightarrow \sC^F \, , \ \ \ \sJ\coloneqq (\bullet \rightrightarrows \bullet) \, .
  \end{equation}
By trivially enriching the index category $\sJ$, we make the category $\sD (\sC^F)$ enriched.
Moreover, the equaliser $\eq: \sD (\sC^F)\rightarrow \sC^F$ becomes an enriched functor.

By inspection, the assignment $X\mapsto (\phi^T_{C,X},\psi^T_{C,X})$ is an enriched functor
$R_0: \sC_T \rightarrow \sD (\sC^F)$. The functor $R$ is a composition
of two enriched functors $R_0$ and $\eq$, hence, enriched.

\subsection{Functor from contramodules to comodules}
\label{s1.6a_L}
A proof of Proposition~\ref{thm_L} is similar to the proof of Proposition~\ref{thm_R}.
%
%
The maps 
$\beta_{Y}$
and
$\delta_{Y}$
in the definition of $LY$ (see \eqref{alpha_beta})
are morphisms of cofree comodules.
Their common right inverse is
\[TY \xrightarrow{T \eta_{F}(Y)} TFY \, . \]
By Assumption~\ref{assum_2},
$\beta_{Y}$ and $\delta_{Y}$
admit a coequaliser $LY$. 
Since the forgetful functor $G_T: \sC_T \to \sC$ is right adjoint, it preserves colimits.
Thus, $LY$ is the coequaliser in $\sC_T$ and a comodule.

To show that $L$ is enriched, consider $\sD (\sC_T)$ (cf.~\eqref{cat_diag}).
By trivially enriching the index category $\sJ$, we make the category $\sD (\sC_T)$ enriched.
Moreover, the coequaliser $\coeq: \sD (\sC_T)\rightarrow \sC_T$ becomes an enriched functor.

By inspection, the assignment $X\mapsto (\beta_{Y},\delta_{Y})$ is an enriched functor
$R_L: \sC^F \rightarrow \sD (\sC_T)$. The functor $L$ is a composition
of two enriched functors $L_0$ and $\coeq$, hence, enriched.

\subsection{Enriched adjunction}
\label{s1.6a_LR}
We start with a useful fact.
\begin{lemma} (cf. \cite[1.7 and 1.8]{Kelly}) \label{enriched_naturality}
Let $\sC$ be a closed symmetric monoidal category.  Let $\sA$ be  a $\sC$-enriched category. 
\begin{enumerate}
\item If $\sB$ is another $\sC$-enriched category and $H: \sA \to \sB$ is a $\sC$-enriched functor, then the maps
  \[H_{X,Y}\, : \, \sA (X,Y) \rightarrow \sB(HX,HY) \ \ \mbox{ (cf. \eqref{enriched_functor} )}   \]
  are $\sC$-natural in both $X$ and $Y$.
\item The internal hom $[X,Y]_\sA$ (cf. \eqref{hom_compl})  is $\sC$-natural in both $X$ and $Y$.
\item The enriched composition
  $c^\sA_{X,Y,Z}$
(cf. \eqref{hom_compl})
is $\sC$-natural in $X$ and $Z$, and $\sC$-extranatural in $Y$.
\end{enumerate}
\end{lemma}

We proceed with the proof of the adjunction $(L\dashv R)$ in Theorem~\ref{adjoint_LR}.

\begin{proof} 
  To show that $(L, R)$ is a $\sC$-enriched adjoint pair we need to show that there is a $\sC$-natural isomorphism of bifunctors
$$ [L X, Y]_T \cong [X, R Y]^F.$$

Note that by Lemma~\ref{enriched_naturality} the internal hom bifunctor $[-,-]$ is a $\sC$-natural transformation. Thus, the adjunction $(T\dashv F)$ becomes an isomorphism of $\sC$-enriched bifunctors
$$
[TX,Y] \cong [X,FY] \, . 
$$
Moreover, we have $[TX,Y]_T \cong [X,R Y]$ and $[L X,Y] \cong [X,FY]^F$ as objects in $\sC$. Note that the maps 
\[ \phi^T_{TX,TY}, \psi^T_{TX,TY}: [TX,Y] \rightrightarrows [TX,TY] \]
are adjuncts of the maps
\[ [\Id, \phi^T_{\chf,X}], [\Id, \psi^T_{\chf,X}]: [X, FY] \rightrightarrows [X,FTY] \, .\]
Observe that the functor $[X,-]$ is a right adjoint and thus preserves kernels. Combined with the fact that $[TX,Y] \cong [X,FY]$ is an isomorphism of bifunctors, we can deduce that every map which equalises the pair $(\phi^T_{TX,TY}, \psi^T_{TX,TY})$ also equalises $([\Id, \phi^T_{\chf,X}], [\Id, \psi^T_{\chf,X}])$. This implies the isomorphism $[TX,Y]_T \cong [X,R Y]$. 

Again, by Lemma~\ref{enriched_naturality} this isomorphism is, in fact, a $\sC$-enriched isomorphism of enriched bifunctors. The argument for $[L X,Y] \cong [X,FY]^F$ is analogous.

We can complete the proof by observing that the following squares are cartesian in $\sC$:
$$
\begin{CD}
[TX,Y] @<{[\gamma_X, \Id]}<< [L X, Y] \\
@A{\gamma^T_{TX,Y}}AA @A{\gamma^T_{L X, Y}}AA\\
[TX,Y]_T @<<{\psi}< [L X, Y]_T\\
\end{CD}
\ \  \cong \ \   
\begin{CD}
[X,FY] @<{\gamma^F_{X,FY}}<< [X, FY]^F \\
@AA{[\Id, \gamma^T_{\chf,X}]}A @AA{\phi}A\\
[X, R Y] @<<{\gamma^F_{X,R Y}}< [X, R Y]^F\\
\end{CD}
$$

Let $d_1 \coloneqq [\gamma_X, \Id] \circ \gamma^T_{L X,Y}$ and $d_2 \coloneqq [\Id, \gamma^T_{\chf, X}] \circ \gamma^F_{X,R Y}$. The maps $d_1$ and $d_2$ clearly equalise the pairs $(\phi^T_{TX,TY},\psi^T_{TX,TY})$ and $([\Id, \phi^T_{\chf,X}], [\Id, \psi^T_{\chf,X}])$ respectively. Thus, by definition $d_1=\gamma^T_{TX,Y}\circ \psi$, i.e., the left square commutes. The universal property of the equaliser implies that $[L X,Y]_T$ is a pullback. A similar argument shows that  the square on the right is cartesian. The existence of the $\sC$-enriched isomorphisms of bifunctors explained above completes the proof.



  \end{proof}

\subsection{Change of comonoid}
\label{s1.Coc}
Now we collect standard technical facts on the behaviour of comodules and contramodules under a morphism $f:\chf \rightarrow \widehat{\chf}$ of comonoids in $\sC$. We omit the proofs.  

We denote the two comonad-monad adjoint pairs with chiefs $\chf$ and  $\widehat{\chf}$ by $(T\dashv F)$ and $(\widehat{T}\dashv \widehat{F})$. Clearly, we have {\em restriction functors} 
  $$
\sRes : \sC_T \rightarrow \sC_{\widehat{T}}, \ \ \sRes (M, \rho: M \rightarrow T M) = (M, (f\otimes \Id_M) \circ \rho),
$$
$$
\sRes : \sC^{F} \rightarrow \sC^{\widehat{F}}, \ \ \sRes (N, \theta: FM \rightarrow M) = (M, \theta \circ [f,\Id_M]).
$$

Besides the comodules and the contramodules, we would like to consider the overcategory (or slice category) $(\sC \!\downarrow\! \chf)$.
Again, there is a restriction functor
$$
\sRes : (\sC \!\downarrow\! \chf) \rightarrow (\sC \!\downarrow\! \widehat{\chf}), \ \ 
\sRes (M, \phi: M \rightarrow \chf) = (M, f \circ \phi).
$$
All three functors deserve the same notation because they are essentially the ``same'' functor, at least they are the same on objects. The similarity breaks down when we consider the existence of induction functors, forcing us to use different notations. 

We start with the overcategory because it is the easiest one to understand.
\begin{prop} \label{right_adjoint_slice}
  Let $\chf$, $\widehat{\chf}$ be any objects of $\sC$, $f\in \sC(\chf,\widehat{\chf})$. Then
$$
\sInd\!\!\downarrow : (\sC\!\downarrow\!{\widehat{\chf}}) \rightarrow (\sC\!\downarrow\!{\chf}), \ \ \sInd\!\!\downarrow (P, \phi: P \rightarrow \widehat{\chf}) = (P\times_{\widehat{\chf}}{\chf}, \pi_2),
$$
where $\pi_2$ is the projection onto the second component, is a $\sC$-enriched functor, $\sC$-enriched right adjoint to $\sRes$.
\end{prop}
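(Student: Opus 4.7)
The plan is to verify that $\sInd\!\!\downarrow$ is well-defined and functorial, establish the ordinary adjunction via the universal property of the pullback, and then promote it to an enriched adjunction using that $[M,-]$, being a right adjoint, preserves pullbacks.

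Since $\sC$ is complete, the fibre product $P \times_{\widehat{\chf}} \chf$ exists, and $\pi_2$ makes it into an object of $(\sC \!\downarrow\! \chf)$. A morphism $g: (P, \phi) \to (P', \phi')$ in $(\sC \!\downarrow\! \widehat{\chf})$ satisfies $\phi' g = \phi$, so the universal property of the pullback produces a unique morphism $P \times_{\widehat{\chf}} \chf \to P' \times_{\widehat{\chf}} \chf$ over $\chf$; this gives functoriality. For the ordinary adjunction, given $(M, \mu) \in (\sC\!\downarrow\!\chf)$ and $(P, \phi) \in (\sC\!\downarrow\!\widehat{\chf})$, a morphism $(M, \mu) \to \sInd\!\!\downarrow(P, \phi)$ is a map $h: M \to P \times_{\widehat{\chf}} \chf$ with $\pi_2 h = \mu$; the universal property identifies such $h$ bijectively with maps $g = \pi_1 h: M \to P$ satisfying $\phi g = f \mu$, i.e., with morphisms $\sRes(M, \mu) \to (P, \phi)$ in $(\sC \!\downarrow\! \widehat{\chf})$. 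Naturality in both variables is immediate.

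For the $\sC$-enriched upgrade, I would use the description of the internal hom in $(\sC \!\downarrow\! K)$ between $(M, \mu)$ and $(N, \nu)$ as the pullback
$$
\begin{CD}
[(M, \mu), (N, \nu)]_{\downarrow K} @>>> [M, N] \\
@VVV @VV{\nu_*}V \\
\unit @>{\widetilde{\mu}}>> [M, K]
\end{CD}
$$
where $\widetilde{\mu}: \unit \to [M, K]$ is the adjunct of $\mu$. Since $[M,-]$ is a right adjoint, it preserves pullbacks, giving $[M, P \times_{\widehat{\chf}} \chf] \cong [M, P] \times_{[M, \widehat{\chf}]} [M, \chf]$. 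Pasting this square with the one above (for $K = \chf$, $N = P \times_{\widehat{\chf}} \chf$, $\nu = \pi_2$) yields the outer pullback
$$
\begin{CD}
[(M,\mu),\sInd\!\!\downarrow(P,\phi)]_{\downarrow\chf} @>>> [M,P] \\
@VVV @VV{\phi_*}V \\
\unit @>{\widetilde{f\mu}}>> [M,\widehat{\chf}]
\end{CD}
$$
which is precisely the defining pullback for $[\sRes(M, \mu), (P, \phi)]_{\downarrow\widehat{\chf}}$. The resulting isomorphism is $\sC$-natural in both variables since every component of the pasting is.

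The main obstacle I anticipate is bookkeeping rather than substance: pinning down the correct enrichment of the slice category (the pullback above), verifying that $\sInd\!\!\downarrow$ is indeed $\sC$-enriched (the action on internal homs can be extracted from the pullback description and the universal property), and checking that this enriched functoriality agrees with the hom isomorphism obtained from pasting. Once these identifications are in place, the adjunction itself is essentially the pasting lemma for pullbacks combined with the fact that internal hom out of a fixed object preserves limits.
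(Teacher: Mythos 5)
Your argument is correct. Note that the paper itself offers no proof here: Section~\ref{s1.Coc} opens with ``We omit the proofs'' and simply cites the unenriched statement \cite[Lemma 7.6.6]{Hir}, so your write-up supplies exactly what is left to the reader. The mechanism you use --- describing the hom object of the slice category $(\sC\!\downarrow\!K)$ as the pullback of $\unit \xrightarrow{\widetilde{\mu}} [M,K] \xleftarrow{\nu_*} [M,N]$, invoking that $[M,-]$ preserves pullbacks because it is a right adjoint, and then pasting the two pullback squares to identify $[(M,\mu),\sInd\!\!\downarrow(P,\phi)]_{\downarrow\chf}$ with $[\sRes(M,\mu),(P,\phi)]_{\downarrow\widehat{\chf}}$ --- is the right one and the computation checks out. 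The one point you should make explicit, since the paper never says how $(\sC\!\downarrow\!\chf)$ is enriched, is that your pullback hom is the intended enrichment: you should verify that the composition $[N,L]\otimes[M,N]\to[M,L]$ restricts to these pullbacks (a short computation using naturality of $c_{-,-,-}$ in the third variable), and that in the cartesian case this enrichment agrees with the comodule enrichment of Definition~\ref{def_comod_maps} under the isomorphism~\eqref{slice_isomorphic}, which is how the proposition actually gets used later in the paper. With that bookkeeping done, the proof is complete.
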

This proposition is an enriched version of the standard fact \cite[Lemma 7.6.6]{Hir}. 


Our comodules are right comodules since $T= - \otimes \chf$. Similarly, there is a category of left comodules, ${}_{T}\sC$, comodules over the comonad $T^\prime = \chf\otimes -$. The comonoid $\chf$ is naturally an object of both ${}_{T}\sC$ and $\sC_T$. In fact, it is a bicomodule in a suitable sense.
\begin{prop} \label{right_adjoint_T} Suppose that the symmetric monoidal category $\sC$ satisfies Assumption~\ref{assum_1}. 
  \begin{enumerate}
  \item There exists a cotensor product, an enriched in $\sC$ bifunctor
    $$
    - \square_{\chf} - : {\sC}_{T}\times {}_{T}\sC \rightarrow \sC,
    $$
    where  $M \square_{\chf} N$ is the equaliser of the pair of maps
    $$
    \rho_M \otimes \Id_N, \, a^{-1}_{M,\chf,N} \circ (\Id_M \otimes \rho_N) \, :\, M \otimes N \rightrightarrows (M \otimes \chf) \otimes N.
    $$
  \item
If $f$ is a morphism of comonoids and the monad $T=\, - \, \otimes \chf$ preserves equalisers of pairs of morphisms, then
$$
\sInd_T : \sC_{\widehat{T}} \rightarrow \sC_{T}, \ \ \sInd_T (M, \rho: M \rightarrow \widehat{T}(M)) = (M\square_{\widehat{\chf}}{\chf}, \widetilde{\rho})
$$
is  a $\sC$-enriched functor, where the structure morphism $\widetilde{\rho}$ appears in the diagram
\begin{center}
\begin{tikzcd}
M\square_{\widehat{\chf}}\chf \arrow[r] \arrow[d, dashrightarrow, "\widetilde{\rho}"]   & {M\otimes \chf} \arrow{d}[swap]{a^{-1}_{\cdot,\cdot,\cdot}\circ(\Id\otimes\Delta)} \arrow[r, shift left]\arrow[r,shift right]  & {(M\otimes \widehat{\chf})\otimes \chf} \arrow{d}[swap]{a^{-1}_{\cdot,\cdot,\cdot}\circ(\Id\otimes\Delta)}  \\
(M\square_{\widehat{\chf}}\chf \arrow[r])\otimes \chf   \arrow{r}
   & {(M\otimes {\chf})\otimes \chf} \arrow[r, shift left]\arrow[r,shift right]  & {((M\otimes \widehat{\chf})\otimes \chf) \otimes \chf }
\end{tikzcd}
\end{center}
with equalisers in both rows and three commutative squares
(the right square is commutative as soon as only the top or only the bottom arrows are taken).
Furthermore, $(\sRes \dashv \sInd_T)$ is an enriched adjunction.
\end{enumerate}
\end{prop}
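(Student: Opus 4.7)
My plan is to treat the two parts separately, exploiting that cotensor products and cotensor-induction are classical constructions in coring theory; the novelty here is carrying them out in the enriched setting under the assumption that $T$ preserves equalisers.

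For part (1), I would first define $M\square_\chf N$ as the claimed equaliser; it exists because $\sC$ is complete. To promote this to an enriched bifunctor, I want, for $M,M'\in \sC_T$ and $N,N'\in {}_{T}\sC$, a morphism
$$
[M,M']_T\otimes [N,N']_T\;\longrightarrow\;[M\square_\chf N,\,M'\square_\chf N']\, .
$$
The natural candidate comes from the enriched functoriality of $\otimes$, which yields a map $[M,M']\otimes [N,N']\to [M\otimes N, M'\otimes N']$, followed by the observation that precomposition with the equaliser inclusion $M\square_\chf N\hookrightarrow M\otimes N$ lands in the enriched equaliser. The point to check is that the composition with $\rho_M\otimes \Id_{N'}$ agrees with the composition with $a^{-1}\circ (\Id_{M'}\otimes \rho_{N'})$ after restricting to comodule/comodule maps; this follows by translating the equalising conditions on $[M,M']_T$ and $[N,N']_T$ (Definition~\ref{def_comod_maps}) into the corresponding identities in $\sC$ and invoking Proposition~\ref{X_natural} for the bookkeeping of the associator. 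The universal property of the equaliser, applied in $\sC$ enriched in itself (see Section~\ref{s1.4ax}), produces the required factorisation. Associativity and unitality of the enriched composition are then inherited from those of $\otimes$ and from the monomorphicity of the equaliser inclusion.

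For part (2), I first construct $\widetilde{\rho}$. Since $T$ preserves equalisers, the bottom row of the displayed diagram is again an equaliser. The composite $M\square_{\widehat\chf}\chf\hookrightarrow M\otimes \chf\xrightarrow{\Id\otimes \Delta_\chf}M\otimes \chf\otimes \chf$ (after applying the associator) equalises the two parallel arrows on the right, because the right square commutes when either of the two parallel pairs of horizontal arrows is chosen — this is where the fact that $f$ is a comonoid morphism, together with coassociativity of $\Delta_\chf$ and of $\rho_M$ (as a $\widehat\chf$-comodule structure), is used. The universal property of the bottom equaliser then produces the dashed arrow $\widetilde{\rho}$. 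Coassociativity and counitality of $\widetilde{\rho}$ follow by further applications of $T$ to the diagram and by the monomorphicity of the equaliser inclusions, so that the $T$-comodule axioms for $(M\square_{\widehat\chf}\chf,\widetilde{\rho})$ can be verified after composing with the inclusion into $M\otimes\chf$.

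Next, I would exhibit the enriched adjunction $\sRes\dashv \sInd_T$. The counit $\varepsilon_M:\sRes(\sInd_T M)\to M$ is induced by $\Id_M\otimes \epsilon_\chf$, restricted along the equaliser inclusion; one checks it is a $\widehat\chf$-comodule map using the definition of $\widetilde{\rho}$ together with $\epsilon_{\widehat\chf}\circ f = \epsilon_\chf$. The unit $\eta_N:N\to \sInd_T(\sRes N)$ is the unique arrow into the equaliser $N\square_{\widehat\chf}\chf$ induced by $\rho_N:N\to N\otimes \chf$ — again the equalising condition is verified by comparing the two parallel legs, using coassociativity of $\rho_N$ on one side and $(f\otimes\Id)\circ\rho_N$ as the $\widehat\chf$-coaction on the other. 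The triangle identities then reduce to counitality and coassociativity after postcomposing with the relevant monomorphism. To upgrade to an enriched adjunction, I would produce the internal natural isomorphism
$$
[\sRes(N),M]_{\widehat T}\;\cong\;[N,\,\sInd_T M]_T
$$
directly as the equaliser of two pairs of maps between internal homs: on both sides the internal hom is a limit (by Lemma~\ref{mono_lemma}-type arguments and biclosedness), and one checks that the two equaliser diagrams agree using the internal form of the unit/counit constructed above and the enriched version of the tensor-hom adjunction.

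The main obstacle is the step in which $\widetilde\rho$ is shown to be well-defined and to satisfy coassociativity: the diagrammatic verification involves commuting $\Delta_\chf$ and $\rho_M$ past the associator and past the parallel equalising arrows, and crucially uses the hypothesis that $T$ preserves equalisers to be able to factor through the lower equaliser. Everything else, including the enriched upgrade, is routine once this compatibility is nailed down.
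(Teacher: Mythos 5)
The paper gives no proof of this proposition: Section~\ref{s1.Coc} states explicitly that the proofs of these change-of-comonoid facts are omitted, with a pointer to the coring-theoretic literature (\cite[11.9]{BrWi}). Your reconstruction is the expected one and is essentially correct: the existence of $\widetilde{\rho}$ via the preserved equaliser in the bottom row, the verification of its counitality and coassociativity after composing with the (monic) equaliser inclusion, and the unit $\rho_N$ / counit $\Id_M\otimes\epsilon_\chf$ description of the adjunction $\sRes\dashv\sInd_T$ with the triangle identities reducing to counitality and coassociativity are exactly the standard argument, correctly adapted. You also correctly isolate where each hypothesis enters ($T$ preserving equalisers for the bottom row to be an equaliser; $f$ a comonoid morphism for $(f\otimes\Id)\circ\Delta_\chf$ to be a coassociative counital left $\widehat{\chf}$-coaction on $\chf$).

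The one place where your sketch is too quick is the enriched bifunctoriality in part (1). You invoke ``the enriched functoriality of $\otimes$'' to get $[M,M']\otimes[N,N']\to[M\otimes N,M'\otimes N']$, but this map is constructed by evaluating both hom-objects, which requires moving $[N,N']$ past $M$; in a merely biclosed (non-symmetric) monoidal category — the stated hypothesis — no such interchange is available, and the left-comodule category ${}_T\sC$ is enriched via the \emph{other} internal hom $\widetilde{[-,-]}$. So either one must assume symmetry, or one must formulate the enrichment of $\square_\chf$ more carefully (e.g.\ separately in each variable, or mixing the two internal homs), and your argument as written does not address this. Since the paper itself asserts the enriched bifunctoriality without comment, this is as much a gap in the statement's precision as in your proof, but it is the step you should not wave through. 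Everything else, including the internalisation of the adjunction isomorphism as a comparison of equaliser diagrams using Proposition~\ref{X_natural}, is sound in outline.
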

For coalgebras over rings this proposition is well known \cite[11.9]{BrWi}. 


If $T$ is continuous, then it preserves the equalisers.
Similarly in Proposition~\ref{left_adjoint_F} below, if $F$ is cocontinuous, then it preserves the coequalisers.
In the category of chain complexes over a commutative ring $\K$ (see Section~\ref{s4e}), these are conditions for $\chf$ to be flat and projective correspondingly.
See also Section~\ref{s1.6b}.


\begin{prop} \label{left_adjoint_F}
Suppose that the symmetric monoidal category $\sC$ satisfies Assumptions \ref{assum_1} and \ref{assum_2}. 
  \begin{enumerate}
  \item There exists a cohom, a $\sC$-enriched bifunctor
    $$
    \Cohom_{\chf} (-,-) : {\sC}_{T}\times \sC^F \rightarrow \sC,
    $$
    where $\Cohom_{\chf} (M, P)$ is the coequaliser of the pair of maps
    $$
    ad_{M,P} \circ [\rho_M,\Id_P], \, [\Id_M, \theta_P] \, :\, [M,F(P)] \rightrightarrows [M,P],
    $$
    where $ad_{M,P}$ is the internal adjunction map.
  \item  If 
    $f$ is a morphism of comonoids and the comonad $F=[\chf, - ]$ is cocontinuous, then
$$
\sCoi^F : \sC^{\widehat{F}} \rightarrow \sC^{F}, \ \ \sCoi^F (P, \theta: \widehat{F}(P)\rightarrow P) =  (\Cohom_{\widehat{\chf}} (\chf , P), \widetilde{\theta})
$$
is  a $\sC$-enriched functor, where the structure morphism $\widetilde{\theta}$ appears in the diagram
\begin{center}
\begin{tikzcd}
  {[\chf, [\widehat{\chf},P]]} \arrow[r, shift left]\arrow[r,shift right]
  & {[\chf,P]}   \arrow[r]
  & \Cohom_{\widehat{\chf}}(\chf,P)   \\
  {[\chf, [{\chf}\otimes \widehat{\chf},P]]} \arrow[r, shift left]\arrow[r,shift right] \arrow{u}[swap]{[\Delta, \Id] \circ a^{-1}_{\cdot,\cdot,\cdot}\circ ad_{\cdot,\cdot}}
  & {[\chf, [{\chf},P]]} \arrow{r} \arrow{u}[swap]{[\Delta, \Id]\circ ad_{\cdot,\cdot}}
  & {[\chf,\Cohom_{\widehat{\chf}}(\chf,P)]} \arrow[u, dashrightarrow, "\widetilde{\theta}"]
\end{tikzcd}
\end{center}
with coequalisers in both rows and three commutative squares
(the leftt square is commutative as soon as only the top or only the bottom arrows are taken).
Furthermore, $(\sCoi^F \dashv \sRes)$ is an enriched adjunction.
\end{enumerate}
\end{prop}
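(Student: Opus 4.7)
The proof splits along the two parts of the statement. For part (1), cocompleteness of $\sC$ (Section~\ref{s1.2a}) immediately gives $\Cohom_\chf(M, P)$ as the required coequaliser. Bifunctoriality is then a direct verification via the universal property: a comodule morphism $M' \to M$ induces a map between the two coequaliser diagrams because it intertwines the structure maps $\rho_{M'}$ and $\rho_M$, and analogously for a contramodule morphism $P \to P'$. The $\sC$-enrichment is obtained by lifting the enriched composition in $\sC$; using the equaliser definitions of $[M', M]_T$ and $[P, P']^F$ from Section~\ref{s1.5} together with the universal property of the coequaliser, one descends the $\sC$-enriched composition of $\sC$ to $\Cohom_\chf$.

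For part (2), the core step is to produce the structure map $\widetilde{\theta}$. The top row of the given diagram is the defining coequaliser of $\Cohom_{\widehat{\chf}}(\chf, P)$. The bottom row is obtained by applying $F = [\chf, -]$, modulo the identifications $[\chf, [\widehat{\chf}, P]] \cong [\chf \otimes \widehat{\chf}, P]$; by the cocontinuity assumption on $F$ the bottom row remains a coequaliser. The two vertical arrows are built out of $\Delta_\chf$ and the adjunction map, and one must check that they intertwine both pairs of horizontal arrows. This verification rests on (i) the comonoid morphism identity $(f \otimes f)\circ \Delta_\chf = \Delta_{\widehat{\chf}} \circ f$, (ii) coassociativity of $\Delta_\chf$, and (iii) the contramodule axiom for $(P, \theta_P)$. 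Once these are in place, the universal property of the bottom coequaliser produces $\widetilde{\theta}$. The contramodule axioms for $\widetilde{\theta}$ then reduce, via a further universal-property argument, to the corresponding axioms for $\theta_P$ and $\Delta_\chf$; hence $(\Cohom_{\widehat{\chf}}(\chf, P), \widetilde{\theta})$ is a bona fide object of $\sC^F$, and $\sCoi^F$ is promoted to a $\sC$-enriched functor by the same universal-property technique used in part (1).

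The enriched adjunction $\sCoi^F \dashv \sRes$ is established by producing a $\sC$-natural isomorphism
\[
[\sCoi^F(P), Q]^F \;\cong\; [P, \sRes(Q)]^{\widehat{F}}
\]
for $P \in \sC^{\widehat{F}}$ and $Q \in \sC^F$. Both sides are realised as equalisers following Section~\ref{s1.5}. Unwrapping the left side using the coequaliser presentation of $\sCoi^F(P)$ — the contravariant functor $[-, Q]$ turns the coequaliser into an equaliser — one matches this equaliser with the one defining $[P, \sRes(Q)]^{\widehat{F}}$. The $\sC$-enrichment and naturality of the resulting isomorphism follow from Lemma~\ref{enriched_naturality}.

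The principal obstacle is the commutativity check for the vertical arrows of the $\widetilde{\theta}$-diagram against each of the horizontal coequaliser arrows. This is an intricate diagram chase through multiple applications of $ad_{\cdot,\cdot}$, the associator, and the comonoid and contramodule axioms; once dispatched, the remaining steps follow from the standard universal-property and $\sC$-enrichment machinery assembled throughout Chapter~\ref{s1}.
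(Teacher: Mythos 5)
The paper does not actually prove this proposition: Section~\ref{s1.Coc} opens with ``We omit the proofs'' and merely notes that the DG-coalgebra case is due to Positselski \cite[2.2]{Pos}, so there is no in-paper argument to compare yours against. Your outline is the natural one and is essentially forced by the diagram in the statement itself: existence of the coequaliser from cocompleteness, the bottom row remaining a coequaliser because $F=[\chf,-]$ is assumed cocontinuous, $\widetilde{\theta}$ induced by the universal property, and the adjunction obtained by applying $[-,Q]$ to the coequaliser presentation of $\sCoi^F(P)$ and matching equalisers. I see no step that would fail.

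That said, the proposal is a roadmap rather than a proof: every genuinely hard verification is named and then deferred. Three points deserve to be made explicit. First, the statement that ``$[-,Q]$ turns the coequaliser into an equaliser'' is exactly where biclosedness enters — $[-,Q]:\sC^{op}\to\sC$ is a right adjoint only because $\widetilde{[-,Q]}$ exists (this is the content of the lemma on enriched cokernels in Section~\ref{s1.4ax}); you use the hypothesis without saying where. Second, the $\sC$-enrichment of $\Cohom_\chf(-,-)$ as a bifunctor contravariant in the comodule variable and covariant in the contramodule variable is not a routine universal-property argument: it requires constructing a map out of $[M',M]_T\otimes[P,P']^F$ that descends through the coequaliser, and the well-definedness rests on the extranaturality machinery of Proposition~\ref{X_natural} and Lemma~\ref{composition}, exactly as in the proofs of Theorems~\ref{thm_R} and~\ref{thm_L}. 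Third, the commutativity of the two squares (which you correctly identify as the crux) and the monad-algebra axioms for $\widetilde{\theta}$ are where the coassociativity of $\Delta_\chf$, the comonoid-morphism identity for $f$, and the $\widehat{F}$-module axiom for $\theta_P$ must each be deployed in a specific place; asserting that the chase ``rests on'' these facts is not the same as performing it. None of this indicates a wrong approach — it indicates an incomplete one, consistent with the authors' own decision to treat the result as standard.
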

In the context of DG-coalgebras over rings this proposition was discovered by Positselski \cite[2.2]{Pos}. 
We finish this section with a question, reminiscent of the standard cohom-defining property in linear categories: 
\begin{question}
  Suppose that the symmetric monoidal category $\sC$ satisfies Assumptions \ref{assum_1} and \ref{assum_2}.
  Does there exist a $\sC$-enriched natural equivalence of trifunctors $\sC{}_{T}\times\, {}_{T}\sC \times \sC \rightarrow \sC$
$$ [M\square_{\chf} N, X] \cong \Cohom_{\chf} (M, [N,X]) \, ?$$  
\end{question}

\subsection{Induction for contrasets}
\label{s1.6b}
Observe that in the category $\sSets$ the comonad $T$ is continuous for any $\chf$.
Thus, for any function $f:\chf \rightarrow \widehat{\chf}$, we have the induction functor for comodules as in Proposition~\ref{right_adjoint_T}.

This agrees well with the isomorphism of categories in Proposition~\ref{chf_sets_comodules}. Indeed, the induction functor for the overcategories $(\sSets\!\downarrow\!{\chf})$ does not require any additional assumptions (cf.  Proposition~\ref{right_adjoint_slice}).

On the other hand, $F$ is not cocontinuous if $|\chf| \geq 2$.
Let $\chf$ be a 2-element set. In this case,  $F(X) = X^2$ for any set $X$. Look at the coequaliser of two maps from a point
$$
\unit \rightrightarrows X \stackrel{coeq.}{\dashrightarrow} X/\!\sim \, .
$$
Here  $X/\sim$ is obtained from $X$ by identifying the two image  points. Apply $F$:
$$
F(\unit) = \unit \rightrightarrows F(X) \stackrel{coeq.}{\dashrightarrow} (X^2)/\! \; \sim \neq (X/\!\sim)^2 = F(X/\!\sim )\, .
$$
Thus, Proposition~\ref{left_adjoint_F} gives us no coinduction for contramodules in $\sSets$.

Let us discuss restriction. In light of Theorem~\ref{any_contraset},
a contramodule  $(P,\theta_P)  \in \sSets^{\chf}$ is represented as a product $(P,\theta_P)  = \prod_{x\in {\chf}} P_x$. Its restriction has similar representation:
\begin{equation}
  \label{restriction_explicit}
  (\widehat{P} ,\widehat{\theta_P}) = Res (P,\theta_P)= \prod_{z\in \widehat\chf} \widehat{P}_{z} \, ,
  \mbox{ where } \
\widehat{P}_{z} \,   = \prod_{y\in f^{-1}(z)} P_{y} \, .
\end{equation}
Notice that if $z$ is not in the image of $f$, then $\widehat{P}_{z}$ is a 1-element set. Now it is time to address induction.
\begin{prop} \label{right_adjoint_F}
  Let $\chf, \widehat{\chf}\in\sSets$, $f\in \sSets(\chf,\widehat{\chf})$. Then there exists a functor
$$
  \sInd^F : \sSets^{\widehat{\chf}} \rightarrow \sSets^{\chf},
$$
  left adjoint to $\sRes$.
\end{prop}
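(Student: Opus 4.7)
The strategy is to construct $\sInd^F$ explicitly using the product representation of Theorem~\ref{any_contraset} and verify the adjunction by a direct hom-set computation. For a nonempty $\widehat{\chf}$-contramodule $Q$, write $Q \cong \prod_{z\in\widehat{\chf}} Q_z$, where each $Q_z$ can be recovered intrinsically as the fiber over $z$ of the associated non-degenerate comodule $L(Q) \to \widehat{\chf}$ via Theorem~\ref{set_CCC}. I set
\[
\sInd^F(Q) \;:=\; \prod_{x\in\chf} Q_{f(x)}
\]
with the natural product contramodule structure from Section~\ref{RX}, and extend by defining $\sInd^F(\emptyset) := \emptyset$, the initial object of $\sSets^{\chf}$.

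The key hom-set description, valid for any set $C$ and nonempty product contramodules, is
\[
\sSets^{C}\Big(\textstyle\prod_{c\in C} V_c,\; \prod_{c\in C} V'_c\Big) \;\cong\; \prod_{c\in C} \sSets(V_c, V'_c),
\]
which follows by applying $L$ and using Theorem~\ref{set_CCC} together with Proposition~\ref{chf_sets_comodules}: $L(\prod_c V_c) = \coprod_c V_c$, and morphisms of sets over $C$ between coproducts split fiberwise. I use this to define $\sInd^F$ on morphisms by $(g_z)_{z\in\widehat{\chf}} \mapsto (g_{f(x)})_{x\in\chf}$, and to compute both sides of the proposed adjunction. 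Using the formula~\eqref{restriction_explicit} for $\sRes$,
\begin{align*}
\sSets^{\chf}(\sInd^F(Q), P) &\;\cong\; \prod_{x\in\chf} \sSets\big(Q_{f(x)}, P_x\big), \\
\sSets^{\widehat{\chf}}(Q, \sRes(P)) &\;\cong\; \prod_{z\in\widehat{\chf}} \sSets\Big(Q_z,\, \prod_{y\in f^{-1}(z)} P_y\Big) \;\cong\; \prod_{x\in\chf} \sSets\big(Q_{f(x)}, P_x\big),
\end{align*}
where the final step commutes $\sSets$ with the product and reindexes the disjoint union $\{(z,y) : f(y) = z\}$ as $\chf$ via $x \mapsto (f(x), x)$. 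The two hom-sets match term-by-term under this identification, yielding the adjunction bijection, and naturality in $Q$ and $P$ is immediate.

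The main obstacle is the set-theoretic subtlety around the empty contramodule: Theorem~\ref{any_contraset} decomposes $Q$ only up to isomorphism, and when $Q = \emptyset$ the choice of which factor $Q_z$ vanishes is not intrinsic, so the plain formula $\prod_x Q_{f(x)}$ depends on the representative. This is handled by setting $\sInd^F(\emptyset) := \emptyset$ by fiat; both $\sSets^{\chf}(\emptyset, P)$ and $\sSets^{\widehat{\chf}}(\emptyset, \sRes(P))$ are singletons for every $P$, so the adjunction bijection extends trivially. A secondary point to check is that the action on morphisms is consistent with the intrinsic recovery of the $Q_z$ as fibers of $L(Q)$, which reduces to functoriality of $L$ on the non-empty/non-degenerate subcategory (Theorem~\ref{set_CCC}).
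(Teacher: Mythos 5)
Your proof is correct, but it takes a genuinely different route from the paper's. The paper factors $f$ as a surjection followed by an injection: for the surjective part it defines $\sInd^F$ as the composite $R\circ\sInd_T\circ L$, leaning on the equivalence of Theorem~\ref{set_CCC} and the comodule induction of Proposition~\ref{right_adjoint_slice}, and only for the injective part does it use the explicit product formula $\prod_{y\in\chf}X_{f(y)}$ together with a fiberwise hom computation like yours. You instead take $\prod_{x\in\chf}Q_{f(x)}$ as the definition for an arbitrary $f$ and verify the adjunction in a single direct calculation; this amounts to a self-contained proof of the formula~\eqref{induction_explicit}, which the paper only records as a consequence after its proof. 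What your approach buys is uniformity and an explicit unit/counit; what the factorization buys is that the surjective case is deduced formally from the already-established co-contra equivalence rather than recomputed. The two delicate points you flag are exactly the right ones, and your resolutions are sound: the fibers $Q_z$ must be extracted functorially (via $L$ and Theorem~\ref{set_CCC}, not via the base-point-dependent splitting of Theorem~\ref{any_contraset}), and the empty contramodule must be handled separately since the product representation is not canonical there --- note that the paper's surjective case likewise singles out $\emptyset$, and that your reindexing correctly absorbs the singleton factors $\prod_{y\in f^{-1}(z)}P_y$ for $z\notin\mbox{Im}(f)$ coming from~\eqref{restriction_explicit}.
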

\begin{proof} A function $f$ is a composition of a surjection $f_1$ and an injection $f_2$:
$$
f: \chf \xrightarrow{f_1} \widetilde{\chf} = \mbox{Im} (f) \xrightarrow{f_2} \widehat{\chf} \, .
$$
It suffices to define a left adjoint functor to $\sRes$ for injections and surjections separately. Then $\sInd$ is a composition of these two functors.

  If $f$ is surjective, we can define the induction functor as a composition
\begin{equation} \label{Ind_surjection}
  \sInd^F : \sSets^{\widehat{\chf}} \xrightarrow{L} \sSets_{\widehat{\chf}} \xrightarrow{\sInd_T} \sSets_{\chf} \xrightarrow{R} \sSets_{\chf} \, .
\end{equation}
In this case a non-degenerate comodule remains non-degenerate after induction. Thus, the non-empty contramodules turn into non-degenerate comodules and vice versa. The empty contramodule $\emptyset$ remains empty, going through these functors.
It follows from Proposition~\ref{right_adjoint_slice} and Theorem~\ref{set_CCC} that this is a left adjoint. 

Now let us assume that $f$ is injective. We can define induction explicitly as
\begin{equation}
  \label{induction_injective}
 (\widetilde{X},\widetilde{\theta_X}) = \sInd^F (X,\theta_X)= \prod_{y\in \chf} X_{f(y)} \, ,
  \mbox{ whence } \ 
  (X,\theta_X)  = \prod_{y\in \widehat{\chf}} X_y \, . 
\end{equation}
To prove that this is a left adjoint, we just need to translate the representation in
Theorem~\ref{any_contraset}
to an explicit calculation of homs:
$$
[\widetilde{X},P]^{\chf}
=
\prod_{z\in\chf} [X_{f(z)}, P_z]
\stackrel{(\Asterisk)}{=}
\prod_{y\in\widehat{\chf}} [X_y, \widehat{P}_y]
=
[X,\widehat{P}]^{\widehat{\chf}}
$$
where the equality $(\Asterisk)$ holds true because $\widehat{P}_y=P_y$ if $y\in\mbox{Im}(f)$ and $\widehat{P}_y$ is a 1-element set otherwise.
\end{proof}

It follows from Proposition~\ref{right_adjoint_F} that equation~\eqref{induction_injective} essentially defines the induced contramodule for a general $f$ as well. If $(X,\theta)  = \prod_{y\in \widehat{\chf}} X_y \in \sSets^{\widehat{\chf}}$, then 
\begin{equation}
  \label{induction_explicit}
 (\widetilde{X},\widetilde{\theta_X}) = \sInd^F (X,\theta_X)= \prod_{z\in \chf} \widetilde{X}_{z} \, ,
  \mbox{ where } \
  \widetilde{X}_{z} = X_{f(z)} \, .
\end{equation}

\section{Model Categories}
\label{s2}

\subsection{Model structures}
\label{s2.1}
Let  $\sB$ be a model category, which we assume to be complete and cocomplete \cite{Hir,Hov}.
The structure classes of morphisms are denoted $\Cof$ for cofibrations, $\WEq$ for weak equivalences and $\Fib$ for fibrations.
Given a morphism $f$, we write its factorisations in the following way: 
$$
f:X\xrightarrow{f^{\prime} \ \Cof} Y\xrightarrow{f^{\prime\prime} \ \Fib\WEq} Z, \ 
f:X\xrightarrow{f^{\prime} \ \Cof\WEq} Y\xrightarrow{f^{\prime\prime} \ \Fib} Z.
$$
Unlike \cite[Def. 1.1.4]{Hov}, we do not automatically assume that the factorisations are endofunctors on the category of maps $\Map (\sB)$ 
(also called the category of squares or the category of arrows). Recall that $\Map (\sB)$ has the maps in $\sB$ as objects and commutative squares in $\sB$ as morphisms.


An object $X\in \sB$ is cofibrant if the map from the initial object ${\emptyset}_X: \emptyset\rightarrow X$ is a cofibration. Similarly, an object $X\in \sB$ is fibrant if the map to the terminal object ${\mathbf 1}_X: X\rightarrow {\mathbf 1}$ is a fibration.
By $X_{\bC}$ and $X_{\bF}$ we denote cofibrant and fibrant replacements of $X$. 
The full subcategory of cofibrant (or fibrant, or cofibrant and fibrant) objects is denoted $\sB_{\bC}$ (or $\sB_{\bF}$, or $\sB_{\bC\bF}$).

A model category $\sB$ is called accessible if $\sB$ is a locally presentable category and both factorisations can be realised by
accessible endofunctors on  $\Map (\sB)$.

\subsection{Model structures on closed monoidal categories}
\label{s2.2}
Suppose now that the closed symmetric monoidal category $\sC$ is also a model category.
The category $\sC$ is called {\em a monoidal model category} \cite[Def. 4.2.6]{Hov} if the model and monoidal structures are compatible in the sense that the following three conditions hold.
\begin{enumerate}
\item The monoidal structure $\otimes : \sC \times \sC \rightarrow \sC$ is a Quillen bifunctor \cite[4.2]{Hov}, i.e., 
given two cofibrations $f,g\in \Cof$, $f\in \sC (U,V)$,
$g\in\sC (X,Y)$,
their pushout
$$
f \Box g:
(V\otimes X) \coprod_{U\otimes X} (U\otimes Y) \rightarrow V\otimes Y
$$
is a cofibration.
\item If one of the cofibrations $f$, $g$ is a trivial cofibration, then
$f \Box g$ is a trivial cofibration.
\item
For all cofibrant $X$ and cofibrant replacements of the monoidal unit
$$
\emptyset_{\unit}: \emptyset \xrightarrow{\Cof} {\unit}_{\Cof} \xrightarrow{f \ \Fib \WEq} \unit
$$
the maps
$$
f \otimes \Id_X : {\unit}_{\Cof} \otimes X \rightarrow \unit\otimes X
, \qquad
\Id_X \otimes f : X \otimes {\unit}_{\Cof}  \rightarrow X \otimes \unit
$$
are weak equivalences.
\end{enumerate}

Notice that condition~{(3)} holds automatically if $\unit$ is cofibrant.

The upshot of this definition is that the homotopy category $\HO (\sC)$ becomes a  closed symmetric monoidal category under the left derived tensor product $\otimes^L$ and the right derived internal homs ${\mathrm R}[-,-]$ and ${\mathrm R}\widetilde{[-,-]}$ with the monoidal unit
$\unit$ \cite[4.3.2]{Hov}.

\subsection{Induced model structures for modules and comodules}
\label{s2.3}

We would like to equip the category
$\sC_T$ with a left induced model structure and the category
$\sC^F$ with a right induced model structure. 
The forgetful functors to $\sC$ are denoted $G_T$ and $G^F$ respectively.
The maximal right (left) complementary class of a class of morphisms $\bX$
is denoted $\bX^\boxslash$ (${}^{\boxslash}\bX$ correspondingly). 
Let us define the classes of maps
\begin{equation}
\label{ind_cl}  
\Cof_{T} \coloneqq G_T^{-1}(\Cof), \
\WEq_{T} \coloneqq G_T^{-1} (\WEq), \
\Fib_{T} \coloneqq ( \Cof_{T} \cap \WEq_{T})^{\boxslash}, \
\end{equation}
$$
\Cof^{F} \coloneqq {}^{\boxslash} ( \Fib^{F} \cap \WEq^{F}), \ 
\WEq^{F} \coloneqq G^{F\, -1} (\WEq), \ 
\Fib^{F} \coloneqq G^{F\, -1} (\Fib).
$$
Even if the categories  $\sC_T$ and $\sC^F$ are complete and cocomplete (cf. Section~\ref{s1.2b}), these classes do not necessarily define model structures.
The following proposition gives some sufficient conditions. Further sufficient conditions are known (cf. \cite[Th. 5.8]{HeShi}, \cite[Th. 4.1]{ScSh}). 
\begin{prop}
  \label{ind_model}
  Suppose that $\sC$ is a closed symmetric monoidal model category such that the model category structure is accessible. Let $(T\dashv F)$ be an internally adjoint comonad-monad pair. 
  \begin{enumerate}
  \item If the category $\sC_T$ is locally presentable, then
      $\sC_T$ is complete and
     equation~\eqref{ind_cl} defines an accessible model structure on $\sC_T$, called (left)-induced.
  \item If the category $\sC^F$ is cocomplete, then
    $\sC^F$ is locally presentable and
    equation~\eqref{ind_cl} defines an accessible model structure on $\sC^F$, called (right)-induced.
    \end{enumerate}
\end{prop}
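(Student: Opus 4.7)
The plan is to invoke the Hess--Kedziorek--Riehl--Shipley (HKRS) theorem on left- and right-induced accessible model structures together with the Makkai--Rosicky theorem on accessible weak factorization systems. Both parts follow the same template: $\sC$ is locally presentable (since an accessible model category has a locally presentable underlying category), and we transfer the model structure across the forgetful functor. The two adjunctions coming from the comonad-monad pair play dual roles. The forgetful $G_T : \sC_T \to \sC$ is a left adjoint (to cofree $T$), so transferring along $G_T$ produces a \emph{left}-induced structure; the forgetful $G^F : \sC^F \to \sC$ is a right adjoint (to free $F$), so transferring along $G^F$ produces a \emph{right}-induced structure, matching the definitions in~\eqref{ind_cl}.

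For Part~(1), local presentability of $\sC_T$ already yields completeness and cocompleteness. To build the model structure, I would first use Makkai--Rosicky to produce two accessible weak factorization systems on $\sC_T$ whose right classes are $(\Cof_T)^{\boxslash}$ and $(\Cof_T \cap \WEq_T)^{\boxslash}$; their underlying generating sets are obtained by pushing generators of the accessible weak factorization systems on $\sC$ through the right adjoint $T$ to $G_T$. The nontrivial model-categorical axiom left to verify is the acyclicity condition: every map in $(\Cof_T)^{\boxslash}$ lies in $\WEq_T$, equivalently every trivial cofibration in $\sC_T$ is a weak equivalence. This is where the internal adjunction $(T\dashv F)$ enters: using Lemma~\ref{surprising} together with the axioms of the monoidal model category, the standard cylinder/path factorization in $\sC$ can be enriched-lifted into $\sC_T$ via $T$, giving the needed path objects for coalgebras.

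For Part~(2), the hypothesis gives cocompleteness and Lemma~\ref{complete_cocomplete} gives completeness; the missing ingredient is local presentability. I would deduce it from \cite[Cor.~2.47]{AdRo}: it suffices to show $\sC^F$ is accessible. For this, note that $F \cong [\chf,-]$ is a right adjoint, so the questions reduces to showing $F$ preserves sufficiently filtered colimits, which follows whenever $\chf$ is presentable; failing that, accessibility of $\sC^F$ can be extracted directly from the existence of the left adjoint $T$ and \cite[Thm.~2.78]{AdRo}. Once $\sC^F$ is locally presentable, the right-induced version of HKRS--Makkai--Rosicky yields the two required weak factorization systems with right classes $\Fib^F$ and $\Fib^F \cap \WEq^F$; the acyclicity condition (trivial fibrations are weak equivalences) is checked dually, using path objects in $\sC$ pulled back through $F$ and the enriched adjunction $(T \dashv F)$.

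The main obstacle is the acyclicity condition in each part; without it HKRS gives only the two factorization systems, not a model structure. The reason to expect it to hold is that we are not transferring along an arbitrary adjunction but along one arising from an \emph{internal} adjoint pair: both $T$ and $F$ interact compatibly with the closed monoidal model structure of $\sC$, so the Quillen bifunctor axiom of Section~\ref{s2.2} lifts the standard path-object construction on $\sC$ into $\sC_T$ and $\sC^F$ after applying $T$ or $F$. The technically delicate piece is turning these candidate path (resp. cylinder) objects into actual path (resp. cylinder) objects in the transferred categories, and verifying that fibrant (resp. cofibrant) replacements within $\sC_T$ and $\sC^F$ land where required; this is where the completeness/cocompleteness of $\sC_T$ and $\sC^F$ and the accessibility of the factorizations are genuinely used.
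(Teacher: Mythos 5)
Your overall route coincides with the paper's: establish local presentability of $\sC_T$ and $\sC^F$, then feed the adjunctions $(G_T\dashv T)$ and $(F\dashv G^F)$ into the Hess--Kedziorek--Riehl--Shipley machinery. In particular your treatment of part (2) --- $F$ is a right adjoint endofunctor of a locally presentable category, hence accessible, hence $\sC^F$ is accessible, and accessible plus complete equals locally presentable --- is exactly the paper's argument; note only that presentability of $\chf$ is not among the hypotheses of part (2) and is not needed here, so what you call your ``fallback'' is in fact the only argument that applies.

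The divergence, and the gap, is in your handling of acyclicity. The paper's proof stops at the categorical prerequisites and then cites \cite[Cor.~3.3.4]{HeKe}, treating that corollary as delivering the accessible induced model structure outright in this configuration; no path- or cylinder-object construction appears. You instead declare acyclicity to be ``the main obstacle'' and propose to discharge it by lifting the standard path/cylinder factorisations of $\sC$ into $\sC_T$ and $\sC^F$ via $T$, $F$ and the pushout-product axiom --- and this step, which you yourself identify as the crux, is not actually carried out and would not go through as sketched. Applying $T=-\otimes\chf$ to a path object of $G_TX$ produces a cofree comodule, not a path object of $X$ in $\sC_T$ (the product in $\sC_T$ is computed by an equaliser, e.g.\ the fibre product over $\chf$ in the cartesian case, not the product in $\sC$), so the factorisation does not ``enriched-lift'' for free; dually, $F=[\chf,-]$ need not preserve the colimits needed to push a cylinder object into $\sC^F$ (the paper shows $F$ fails to be cocontinuous already in $\sSets$ for $|\chf|\geq 2$). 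Acyclicity for transferred structures genuinely can fail, and where the paper does need to check it by hand it imposes explicit hypotheses of exactly this type (Corollary~\ref{ind_model_loc_pres}(3)). So either rely on the cited corollary and drop the acyclicity discussion, or supply a real verification; as written, the self-identified key step is missing.
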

\begin{proof}
A locally presentable category is complete \cite[Cor. 1.28]{AdRo}.  Then part (1) follows immediately from \cite[Cor. 3.3.4]{HeKe}.

  The category $\sC^F$ admits small limits and colimits by
  our assumptions (cf. Section~\ref{s1.2b}).
Now, the functor $F:\sC \rightarrow \sC$ is a right adjoint, hence, accessible by \cite[Prop. 2.23]{AdRo}.  By \cite[Th. 1.20]{AdRo}, $\sC^F$ is accessible. Since $\sC^F$ is complete, it is locally presentable \cite[Cor. 2.47]{AdRo}.

The second statement in (2) follows from \cite[Cor. 3.3.4]{HeKe}.
\end{proof}

We finish the section with the following fact:
  \begin{cor}
    \label{ind_model_loc_pres}
    Suppose that, further to the conditions of Proposition~\ref{ind_model}, the category $\sC$ is locally presentable. Then the following statements hold.
    \begin{enumerate}
\item Equation~\eqref{ind_cl} defines an accessible (left-induced) model structure on $\sC_T$ and
  an accessible (right-induced) model structure $\sC^F$.
\item If $\sC$ is cofibrantly generated or right proper, with generating set of trivial cofibrations $\bJ$, and if the functor $G^F$ takes relative $F \bJ$-complexes to weak equivalences, then $\sC^F$ is also cofibrantly generated or right proper, respectively.  
 \end{enumerate}
 \end{cor}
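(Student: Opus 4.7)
The plan is to reduce the claim to previously established results, handling each part in turn. For part (1), Proposition~\ref{chief_presentable}(1) asserts that $\sC_T$ is locally presentable (and in particular complete) whenever $\sC$ is, so the hypothesis of Proposition~\ref{ind_model}(1) is met and the left-induced structure on $\sC_T$ exists as an accessible model structure. For part (2), the presentability of the chief $\chf$ triggers Proposition~\ref{chief_presentable}(3), giving that $\sC^F$ is cocomplete (in fact locally presentable); Proposition~\ref{ind_model}(2) then supplies the accessible right-induced model structure on $\sC^F$.

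For part (3), I would first establish cofibrant generation by invoking the classical Kan transfer theorem along the free/forgetful adjunction $F : \sC \rightleftarrows \sC^F : G^F$, taking $F\bI$ and $F\bJ$ as the candidate generating cofibrations and trivial cofibrations of $\sC^F$ (where $\bI,\bJ$ generate $\sC$). The smallness required by the small object argument is automatic because $\sC^F$ is locally presentable by part (2), so every object is presentable, and $F$ preserves presentability as a left adjoint. The only non-formal input, that every relative $F\bJ$-cell complex is sent to a weak equivalence by $G^F$, is supplied by hypothesis; this is what guarantees the acyclicity step of the transfer and forces the induced classes to coincide with those of \eqref{ind_cl}.

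For right properness, the argument is independent of the relative $F\bJ$-complex condition and essentially formal. Since $F$ is a monad, the forgetful functor $G^F$ creates all limits, so in particular pullbacks in $\sC^F$ are computed in $\sC$ on underlying objects. By the very definition \eqref{ind_cl}, both fibrations and weak equivalences in $\sC^F$ are created by $G^F$ as well. Consequently, a pullback in $\sC^F$ of a weak equivalence along a fibration becomes, after applying $G^F$, a pullback in $\sC$ of a weak equivalence along a fibration, which is a weak equivalence by right properness of $\sC$; pulling this back through $G^F$ (which reflects weak equivalences by definition) gives right properness of $\sC^F$.

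The main obstacle is the cofibrant generation portion of part (3). Conceptually, once recognized as an instance of Kan's transfer theorem there is no genuine work to perform, but the hypothesis that $G^F$ takes relative $F\bJ$-complexes to weak equivalences cannot be eliminated in this level of generality; verifying it in concrete examples is typically the delicate step, and it is precisely what ensures that the candidate generating trivial cofibrations $F\bJ$ behave correctly with respect to the transferred weak equivalences.
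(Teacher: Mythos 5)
Your proposal is correct and follows essentially the same route as the paper: parts (1) and (2) are reduced to Propositions~\ref{chief_presentable} and \ref{ind_model}, cofibrant generation in part (3) is obtained by the transfer theorem (the paper cites \cite[Th. 11.3.2]{Hir}) using local presentability for smallness and the hypothesis on $G^F$ for acyclicity, and right properness follows because $G^F$ creates limits while fibrations and weak equivalences are defined by preimage. Your write-up simply spells out in more detail what the paper states in three sentences.
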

   \begin{proof}
    The first statement follows from Proposition~\ref{ind_model} and Proposition~\ref{chief_presentable}.

By Proposition~\ref{chief_presentable}  $\sC^F$ is locally presentable. Thus, combined with our assumption on $G^F$, it follows that $\sC^F$ is cofibrantly generated by 
\cite[Th. 11.3.2]{Hir}. Since limits in $\sC^F$ are inherited from $\sC$, the model structure on $\sC^F$ is right proper.
\end{proof}

\subsection{Comodule-contramodule correspondence for model categories}
\label{s2.4}

Let us consider the following diagram of categories and the three pairs of $\sC$-enriched adjoint functors $(F\dashv G^F)$, $(G_T\dashv T)$ and $(L \dashv R)$ (cf. Theorem~\ref{adjoint_LR}).
\begin{equation}
  \label{categor_triangle}
\begin{tikzcd}[row sep=2.5em]
 &  \sC \arrow[dl,shift right,swap,"F"]\arrow[dr,shift right,swap,"T"]\\
\sC^F  \arrow[ru,shift right,swap,"G^F"]  \arrow[rr,shift left, "L"] &&  \sC_T\arrow[ll,shift left,"R"] \arrow[ul,shift right,swap,"G_T"]
\end{tikzcd}
\end{equation}
All these adjunctions are $\sC$-enriched.
Assuming that equation~\eqref{ind_cl} defines model structures,
the adjunctions $(F\dashv G^F)$ and $(G_T\dashv T)$ are Quillen adjunctions.
What about the third adjunction $(L \dashv R)$?

\begin{prob}
  \label{prob_QA}
  \begin{enumerate}
    \item Find necessary and sufficient conditions for the adjunction $(L \dashv R)$ to be a Quillen adjunction (and/or a Quillen equivalence) between the right-induced model category $\sC^F$ and the left-induced model category $\sC_T$.
\item  Investigate existence of other model category structures on $\sC^F$ and $\sC_T$ (or their co(completions)) under which the adjunction $(L \dashv R)$ is a Quillen adjunction or a Quillen equivalence.
\end{enumerate}  \end{prob}


\subsection{An answer for cartesian closed categories}
In this section 
we assume that $\sC$ is a cartesian closed category. This means that the monoidal product $\otimes$ in $\sC$ is the categorical product. It follows that $\sC$ is symmetric and the unit object $\unit$ is the terminal object. Similarly to the start of Section~\ref{s1.6}, all comonoids in such category are objects $C$ with the diagonal map $\Delta :C \rightarrow C \times C$.

Let a comonoid $\chf$ be a chief object of an internally adjoint comonad-monad pair on $\sC$. Similarly to Proposition~\ref{chf_sets_comodules}, $\sC_T$ is isomorphic to the overcategory (or slice category) $(\sC \!\downarrow\! \chf)$ (c.f. \cite{Hir2}):
\begin{equation} \label{slice_isomorphic}
(M,\rho : M\rightarrow T(M)) \leftrightarrow (M, \phi : M\rightarrow \chf) \ \mbox{ where } \ \rho=(\phi,\Id_M) \, . 
\end{equation}

\begin{prop} \label{slice_complete} 
The category $\sC_T$ is complete and cocomplete.
\end{prop}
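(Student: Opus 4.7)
My plan is to exploit the isomorphism $\sC_T \cong (\sC \!\downarrow\! \chf)$ supplied by~\eqref{slice_isomorphic} and reduce the proposition to the purely categorical fact that a slice of a bicomplete category is bicomplete. Since $\sC$ was assumed complete and cocomplete back in Section~\ref{s1.2a}, only small limits and colimits in the slice category need to be produced, and the work consists in writing them down concretely in terms of the corresponding (co)limits in $\sC$.

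For cocompleteness I would argue that the forgetful functor $U : (\sC \!\downarrow\! \chf) \to \sC$ creates small colimits. Given a diagram $i \mapsto (X_i, \phi_i)$, form $X \coloneqq \operatorname{colim}_i X_i$ in $\sC$; the family $\{\phi_i\}$ is compatible with the transition morphisms of the diagram (since these are slice morphisms, hence commute with the structure maps to $\chf$), so the universal property produces a unique $\phi : X \to \chf$ with $\phi \circ \iota_i = \phi_i$, and $(X, \phi)$ is then routinely verified to be the colimit in the slice.

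For completeness the construction is slightly more delicate, because $U$ does \emph{not} preserve the terminal object: in $(\sC \!\downarrow\! \chf)$ the terminal object is $(\chf, \Id_\chf)$, not $U^{-1}(\unit_\sC)$. A uniform way to handle all small limits at once is to augment the indexing category. Given $F: \sI \to (\sC \!\downarrow\! \chf)$, I would form $\sI^{+}$ by freely adjoining a new terminal object $*$ to $\sI$, and extend $F$ to $\widetilde{F}: \sI^{+} \to \sC$ by setting $\widetilde{F}(*) \coloneqq \chf$ and declaring each new morphism $i \to *$ to be the structure map $\phi_i$. The limit $L \coloneqq \lim \widetilde{F}$ exists in $\sC$ by completeness, its leg at $*$ provides a canonical morphism $\pi: L \to \chf$, and $(L, \pi)$ is the limit of $F$ in the slice. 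Equivalently and more concretely, one can exhibit the terminal object as $(\chf, \Id_\chf)$, equalisers as those computed in $\sC$, and binary products as fibre products $X \times_\chf Y$, and invoke the standard reduction of general limits to these.

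I do not anticipate any real obstacle. The only point requiring mild attention is exactly the one noted above, namely that $U$ fails to preserve non-connected limits (most visibly the terminal object), so the naive ``compute everything downstairs in $\sC$'' recipe used for colimits has to be replaced for limits by the augmentation trick, or equivalently by the fibre-product description of binary products.
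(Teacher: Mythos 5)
Your proposal is correct and follows essentially the same route as the paper: the paper identifies $\sC_T$ with the slice $(\sC \!\downarrow\! \chf)$ and cites \cite[IV.7, Th.~1]{MacMo} for completeness of a slice of a complete category, and invokes Lemma~\ref{complete_cocomplete} (the forgetful functor from comodules over a comonad creates colimits) for cocompleteness. You simply supply the proofs of these two cited facts explicitly — the augmentation-of-the-index-category argument for limits and creation of colimits by the forgetful functor — which is a fine, self-contained substitute.
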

\begin{proof}
The slice category of a complete category is complete \cite[IV.7, Th. 1]{MacMo}. Cocompleteness is immediate (Section~\ref{s1.2b}). 
\end{proof}

The left-induced model structure (see~\eqref{ind_cl}) on $\sC_T$ is, in fact, {\em induced}:
\begin{prop} \label{ind_model_str}   (cf. \cite{Hir2})
If $\sC$ is cofibrantly generated, then the following is a cofibrantly generated model structure on $\sC_T$: 
\begin{equation}
\label{ind_cartesian_cat}  
\Cof_{T} = G_T^{-1}(\Cof), \
\WEq_{T} = G_T^{-1} (\WEq), \
\Fib_{T} = G_T^{-1} (\Fib). 
\end{equation}
If $\sC$ is left or right proper, then so is $\sC_T$.  
\end{prop}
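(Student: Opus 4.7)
The plan is to reduce the proposition to the standard fact that a slice category of a cofibrantly generated model category inherits a cofibrantly generated model structure and preserves properness. Under the isomorphism $\sC_T \cong (\sC \!\downarrow\! \chf)$ from~\eqref{slice_isomorphic}, the forgetful functor $G_T$ becomes the domain functor $(M,\phi)\mapsto M$, and a morphism $(M,\phi)\to (N,\psi)$ is an $f\in\sC(M,N)$ with $\psi f=\phi$. Hence the three classes in~\eqref{ind_cartesian_cat} coincide with the classes of the slice category obtained by ignoring the structure maps to $\chf$. Completeness and cocompleteness are supplied by Proposition~\ref{slice_complete}, with colimits inherited directly from $\sC$.

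Next I would verify the model-category axioms in turn. Two-out-of-three and retract closure are immediate, as they concern only the underlying morphisms. The lifting axioms reduce to those of $\sC$: a commutative square in $(\sC\!\downarrow\!\chf)$ yields a commutative square in $\sC$, and any lift constructed there automatically commutes with the maps to $\chf$ by chasing the outer triangle. For factorisations and cofibrant generation, I would take generating (trivial) cofibrations $\bI, \bJ$ of $\sC$ and form
$$\bI_T \coloneqq \{(A,\phi f)\to (B,\phi) \ :\ f\in \bI,\ \phi\in\sC(B,\chf)\},$$
and analogously $\bJ_T$. Smallness of the domains is preserved, because relative $\bI_T$-cell complexes are just relative $\bI$-cell complexes in $\sC$ equipped with the induced maps to $\chf$; filtered colimits, being computed in $\sC$, see only the underlying objects. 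Quillen's small object argument then delivers the two functorial factorisations, and the characterisations $\Fib_T = \bJ_T^\boxslash$ and $\Fib_T\cap\WEq_T=\bI_T^\boxslash$ follow from their $\sC$-counterparts via the lifting observation above.

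For properness I would appeal to the same mechanism. Right properness asks that the pullback of a weak equivalence along a fibration is a weak equivalence; since pullbacks in $(\sC\!\downarrow\!\chf)$ are computed as pullbacks of the underlying diagram in $\sC$, and both classes are created by $G_T$, this is inherited from $\sC$ verbatim. Left properness is dual, using that pushouts in the slice category are computed in $\sC$. The only genuinely technical point is the transfer of the small object argument, and even that is essentially routine once one observes that smallness of an object $(M,\phi)$ is governed by smallness of $M$; everything else is formal bookkeeping with the data $(M,\phi)$.
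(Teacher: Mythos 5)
Your argument is correct and follows the same route as the paper: identify $\sC_T$ with the slice category $(\sC\!\downarrow\!\chf)$ via~\eqref{slice_isomorphic} and transport the cofibrantly generated model structure and properness. The only difference is that the paper simply cites Hirschhorn's theorems on overcategories for these two facts, whereas you unpack the verification (generating sets, lifting, smallness, and the computation of pushouts/pullbacks in the slice) explicitly.
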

\begin{proof}
We identify $\sC_T$ with $(\sC \!\downarrow\! \chf)$. 
Since $\sC$ is a cofibrantly generated model category, so is $(\sC \!\downarrow\! \chf)$ under the model structure~\eqref{ind_cartesian_cat} \cite[Th. 1.5]{Hir2}.
This proves the first statement. 

The second statement is \cite[Th. 1.7]{Hir2}.
\end{proof}

We do not know any special description of $\sC^F$ in the cartesian case but the behaviour of the comodule-contramodule correspondence is distinctive.
\begin{prop}
\label{Quillen_adjunction_cartesian_cat}  
Suppose that $\sC$ is cartesian closed, the left-induced model structure exists on $\sC_T$ and the right-induced model structure exists on $\sC^F$.  
Then the pair $({L}\dashv R)$ is a Quillen adjunction.  
\end{prop}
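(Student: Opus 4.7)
To prove the adjunction is Quillen, I plan to show that $R$ preserves fibrations and trivial fibrations. By the right-induced structure on $\sC^F$, a morphism $g$ is a (trivial) fibration in $\sC^F$ if and only if $G^F(g)$ is one in $\sC$, and Proposition~\ref{ind_model_str} gives the analogous statement for $\sC_T \cong (\sC \!\downarrow\! \chf)$ via $G_T$. The task therefore reduces to showing that $G^F R(f)$ is a (trivial) fibration in $\sC$ whenever $G_T(f)$ is.

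First I would unpack Definition~\ref{def_comod_maps} in the cartesian closed setting. With $T = {-}\times\chf$, $F = [\chf,-]$, and $\rho_X = (\Id,\phi)$ encoding the slice description \eqref{slice_isomorphic}, the two morphisms $\phi^T_{\chf,X}, \psi^T_{\chf,X} \colon [\chf,X] \rightrightarrows [\chf, X\times\chf] \cong [\chf,X]\times[\chf,\chf]$ agree on the first factor, while on the second they are $\phi_\ast$ and the constant map at $\Id_\chf$ respectively. Hence the underlying object of $R(X)$ is the pullback
\[
G^F R(X) \;\cong\; \unit \times_{[\chf,\chf]}[\chf,X],
\]
the internal object of sections of $\phi$. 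For a morphism $f\colon(X,\phi)\to(Y,\psi)$ in $\sC_T$, the pasting law for pullbacks yields $G^F R(X) \cong G^F R(Y) \times_{[\chf,Y]} [\chf,X]$, identifying $G^F R(f)$ with the base change of $[\chf,f]$ along $G^F R(Y)\to[\chf,Y]$.

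Then I would invoke the pullback-corner form of the Quillen bifunctor axiom for the cartesian product in $\sC$: for every cofibration $i\colon A\to B$ and (trivial) fibration $p\colon U\to V$, the comparison map $[B,U]\to [A,U]\times_{[A,V]}[B,V]$ is a (trivial) fibration. Specialising $i$ to $\emptyset \to \chf$ yields that $[\chf,f]$ is a (trivial) fibration in $\sC$, and since (trivial) fibrations are stable under pullback, $G^F R(f)$ is too, completing the verification. The hard part will be justifying this specialisation, which presupposes that $\emptyset \to \chf$ is a cofibration, i.e., that the chief $\chf$ is cofibrant in $\sC$; this is automatic in the motivating examples $\sSets$ and $\sW$ but not in full generality, so either cofibrancy of $\chf$ is silently built into the hypotheses, or the argument must be refined—for instance by checking directly on the generating cofibrations of $\sC^F$ through the Kleisli identity $LF\cong T$ together with a retract and small-object argument—to bypass that hypothesis.
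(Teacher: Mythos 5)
Your proposal is correct and rests on the same two pillars as the paper's proof (the adjunction $-\times\chf \dashv [\chf,-]$ and the compatibility of the monoidal and model structures), but it packages them differently. The paper argues by a direct lifting-property chase: given a lifting problem against $G^F R(f)$ with a (trivial) cofibration $U\to V$ on the left, it passes to the adjunct problem against $f$ with $U\times\chf\to V\times\chf$ on the left, solves it using that $f$ is a (trivial) fibration in $\sC$, and then verifies by hand that the adjunct lift equalises $\phi^T_{\chf,X}$ and $\psi^T_{\chf,X}$, hence lands in $RX\subseteq FX$ (this is the content of diagram~\eqref{cartesian_diagram}). You instead observe once and for all that in the cartesian case $G^F R(X)\cong \unit\times_{[\chf,\chf]}[\chf,X]$ is the object of sections, identify $G^F R(f)$ via the pasting law as a base change of $[\chf,f]$, and conclude from the pullback-corner form of the Quillen bifunctor axiom plus stability of (trivial) fibrations under pullback. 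This is a legitimate and arguably cleaner route: it replaces the ad hoc equaliser verification by a structural statement about $R$ that is reusable elsewhere (it is exactly the description of $R$ used in the examples $\sSets$, $\sS$, $\sW$).

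The issue you flag at the end is real, but it is not a defect of your argument relative to the paper's: the paper's proof asserts without comment that $U\times\chf\to V\times\chf$ is again a (trivial) cofibration, and since $X\times\emptyset\cong\emptyset$ in a cartesian closed category this is precisely the pushout-product of $U\to V$ with $\emptyset\to\chf$, so it requires $\chf$ to be cofibrant (or some other reason for $-\times\chf$ to preserve (trivial) cofibrations) exactly as your specialisation of the corner axiom does. In the categories to which the proposition is applied ($\sSets$, simplicial sets, $\sW$) every object is cofibrant, so nothing is lost there, but as a statement about general cartesian closed monoidal model categories both proofs tacitly use this hypothesis; you are right to make it explicit rather than to rely on the fallback arguments you sketch.
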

\begin{proof}
We need to show that the functor $R: \sC_T \to {\sC^F}$ preserves fibrations and trivial fibrations. 
Let $f: (X, \phi_X) \to (Y, \phi_Y)$ be a (trivial) fibration in $\sC_T$. Since the model structure on ${\sC^F}$ is right-induced, we need to verify that $R f$ is a (trivial) fibration in $\sC$. Let us consider a commutative diagram in $\sC$
\begin{center}
\begin{tikzcd}
  U \arrow[r] \arrow{d}[swap]{\Cof\cap\WEq\ni (\mbox{ or }\Cof\ni)} 
    & R X \arrow[d, "R f"]  \\
  V  \arrow[ur, dashrightarrow, "h"] \arrow[r, "g"]
    & R Y
\end{tikzcd}
\end{center}
where the left down arrow is a trivial cofibration (correspondingly, a cofibration) in $\sC$. The diagonal filling $h$ has not been found yet. Since $R X$ is a subobject of $FX=[\chf,X]$,  we have the adjunct commutative diagram
\begin{center}
\begin{tikzcd}
    TU=U \times \chf \arrow[r] \arrow{d}[swap]{\Cof\cap\WEq\ni (\mbox{ or }\Cof\ni)} 
    & X \arrow[d, "f"]  \\
     TV= V  \times \chf \arrow[ur, dashrightarrow, "\hat{h}"] \arrow[r, "\hat{g}"]
    & Y
\end{tikzcd}
\end{center}
where the left down arrow is also a trivial cofibration (a cofibration) in $\sC$. Since the model structure on $\sC_T$ is induced, $f$ is a (trivial) fibration in $\sC$. Thus, there exists a diagonal filling $\hat{h}$, whose adjunct map $h:V\rightarrow [\chf,X]$ would be a diagonal filling of the first diagram if it were to factor through $R X\hookrightarrow FX$. This would imply that $R f$ is a (trivial) fibration, finishing the proof.

To prove the outstanding claim we need to show that $h$ equalises the pair of maps
$$\phi^T_{\chf,X},\psi^T_{\chf,X}: [\chf,X]\rightrightarrows [\chf,TX]=[\chf,X\times \chf]\cong [\chf,X]\times [\chf,\chf]\, ,$$
defined in Section~\ref{s1.5}.
The first components of these maps are equal so that we need to prove that
$$
(\phi^T_{\chf,X})_1 \circ h = (\psi^T_{\chf,X})_2 \circ h: [\chf,X]\rightrightarrows [\chf,\chf] \, .
$$
This follows from the fact that $g:V\rightarrow R Y$ equalises the similar maps for $Y$ and the commutativity of the following diagram: 
\begin{equation} \label{cartesian_diagram}
  \begin{tikzcd}
U \arrow[r] \arrow{d}[swap]{\Cof\cap\WEq\ni (\mbox{ or }\Cof\ni)} 
    & R X \arrow[d] \arrow[r]  & {[\chf ,X]} \arrow[d, "Ff"]\arrow[r]  & {[\chf , \chf]} \arrow[d, "\Id_{[\chf , \chf]}"]  \\
  V  \arrow[urr, dashrightarrow, "h"] \arrow{r}[swap]{g}
    & R Y\arrow[r]& {[\chf , Y]} \arrow[r, shift left]\arrow[r,shift right]  & {[\chf , \chf]}
\end{tikzcd}
\end{equation}
\end{proof}

For the pair $(L\dashv R)$ to be a Quillen equivalence, the maps
\begin{equation} \label{Quillen_equivalence_maps}
\eta_X : X \rightarrow R (L X) \rightarrow R ( (L X)_{\bF}), \ \
\epsilon_Z : L ((R Z)_{\bC}) \rightarrow L (R Z) \rightarrow Z\, 
\end{equation}
for all $X\in (\sC^F)_{\bC}$, $Z\in (\sC_T)_{\bF}$, 
derived from the unit and the counit of adjunction,
must be weak equivalences. 
For this to be true it suffices to localise at the classes of maps $\bA$ and $\bB$ as constructed below. First start with factorising the maps $\eta_X$ and $\epsilon_Z$:
$$\eta_X:X\xrightarrow{g_X\ \Cof} X^\prime \xrightarrow{\Fib\WEq} R ((L X)_{\bF})$$
$$\epsilon_Z :L ( (R Z)_{\bC}) \xrightarrow{k_Z\ \Cof\WEq} Z^\prime \xrightarrow{\ \Fib} Z.$$
Taking fibrant and cofibrant replacements $X'_\bF$ and $Z'_\bC$ of the objects $X'$ and $Z'$ respectively, we obtain maps:
$$ r_{X}: X \xrightarrow{g_X} X' \to X'_\bF \ \text{and}\ q_{Z}: Z'_\bC \to Z' \xrightarrow{k_Z} Z.$$
Factorising these gives us our desired classes:
\begin{equation}
\label{localisation_classes}  
\bA \coloneqq \{f_X \,\mid\, X\xrightarrow{\Cof \WEq} X^{''} \xrightarrow{f_X \ \Fib} X'_{\bF}\}, \
\end{equation}
$$
\bB \coloneqq \{h_Z \,\mid\, Z'_{\bC} \xrightarrow{\Cof} Z^{''} \xrightarrow{\Fib \WEq} Z\}.
$$
\begin{thm}
\label{Quillen_equivalence_cartesian_cat}  
  Let us make the following assumptions:
  \begin{enumerate}
  \item $\sC$ is a locally presentable category,
  \item $\sC$ is a cartesian closed monoidal model category,
  \item $\sC$ is a left and right proper model category,
  \end{enumerate}
  Then there exist a right Bousfield localisation $\RB_{\bA}(\sC^F)$ and a left Bousfield localisation $\LB_{\bB}(\sC_T)$, so that the co-contra correspondence $(L\dashv R)$ induces a Quillen equivalence between them. 
\end{thm}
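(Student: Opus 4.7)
The plan proceeds in four stages: establish the induced model structures, form the Bousfield localisations, verify that $(L \dashv R)$ remains Quillen on the localisations, and finally show the derived unit and counit become weak equivalences.

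First, Proposition~\ref{ind_model_str} supplies the induced, cofibrantly generated, left and right proper model structure on $\sC_T$ (using assumption (2) together with cofibrant generation coming from local presentability). Corollary~\ref{ind_model_loc_pres}(2), combined with presentability of $\chf$, yields an accessible right-induced model structure on $\sC^F$; its properness is transferred from $\sC$ in the cartesian setting. Both $\sC^F$ and $\sC_T$ are thus combinatorial and proper, and Proposition~\ref{Quillen_adjunction_cartesian_cat} supplies the Quillen adjunction $(L\dashv R)$.

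Second, a routine accessibility argument using local presentability reduces the classes $\bA$, $\bB$ to small sets of representatives. Smith's recognition theorem then produces $\LB_\bB(\sC_T)$, and its dual (right-Bousfield for combinatorial right proper categories) produces $\RB_\bA(\sC^F)$. Recall that left Bousfield localisation preserves cofibrations and trivial fibrations while enlarging only the weak equivalences and trivial cofibrations; right Bousfield localisation dually preserves fibrations and trivial cofibrations while enlarging the weak equivalences and trivial fibrations. Consequently $L$ still sends cofibrations and trivial cofibrations of $\RB_\bA(\sC^F)$ to their counterparts in $\LB_\bB(\sC_T)$, so $(L\dashv R)$ remains a Quillen adjunction between the localised categories.

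Third, for $X$ cofibrant in $\RB_\bA(\sC^F)$ (hence also cofibrant in $\sC^F$), the definition of $\bA$ places $f_X$ in the $\RB_\bA$-weak equivalences. Applying 2-out-of-3 first to the factorisation $X \to X'' \xrightarrow{f_X} X'_\bF$ of $r_X$, and then to $\eta_X = (X' \to R((LX)_\bF)) \circ g_X$ together with the fibrant replacement $X' \to X'_\bF$, shows that the original unit $\eta_X : X \to R((LX)_\bF)$ is a $\RB_\bA$-weak equivalence. The derived unit in the localised setting is $X \to R(Z)$, where $Z$ is a $\LB_\bB$-fibrant replacement of $LX$. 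Comparing via a chosen $\LB_\bB$-fibrant replacement of $(LX)_\bF$, and invoking Ken Brown's lemma for the right Quillen functor $R$ together with the preservation properties of the Bousfield localisations, one concludes that this derived unit is also a $\RB_\bA$-weak equivalence. The derived counit is handled dually using $\bB$.

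The main obstacle is precisely this last comparison: the sets $\bA$ and $\bB$ are constructed from the unlocalised fibrant and cofibrant replacements, whereas the Quillen equivalence is tested against the localised ones. Closing the gap requires careful use of the interaction between $R$ (respectively $L$) and the trivial cofibrations (respectively trivial fibrations) that arise from $\LB_\bB$-fibrant (respectively $\RB_\bA$-cofibrant) replacements, together with the standard fact that right (respectively left) Quillen functors preserve weak equivalences between fibrant (respectively cofibrant) objects. A secondary technical point is justifying the existence of the right Bousfield localisation at a set of morphisms, rather than at a set of objects, in this combinatorial proper setting.
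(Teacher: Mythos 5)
Your proposal follows the same route as the paper: induced model structures on $\sC_T$ and $\sC^F$, reduction of the classes $\bA$, $\bB$ of \eqref{localisation_classes} to sets using local presentability, existence of $\LB_{\bB}(\sC_T)$ and $\RB_{\bA}(\sC^F)$ from combinatoriality and properness, and the observation that the localisation classes are engineered precisely so that the maps \eqref{Quillen_equivalence_maps} become weak equivalences. The only real divergence is in how the final verification is closed, and it is exactly the step you flag as ``the main obstacle''. You attempt a point-set comparison between the unlocalised fibrant replacement $(L X)_{\bF}$ used to define $\bA$ and the $\LB_{\bB}$-fibrant replacement required for the derived unit of the localised adjunction, and you rightly note that Ken Brown's lemma does not apply directly since $(L X)_{\bF}$ need not be $\LB_{\bB}$-fibrant. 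The paper avoids this comparison altogether: it cuts $\bA$, $\bB$ down to sub\emph{sets} $\bA^{\flat}$, $\bB^{\flat}$ indexed by a generating family of objects, passes to the derived adjunction between the homotopy categories $\HO(\RB_{\bA}(\sC^F))$ and $\HO(\LB_{\bB}(\sC_T))$, and argues that the derived unit and counit, being isomorphisms on generators, are isomorphisms everywhere. That argument is shorter but leaves implicit essentially the same bookkeeping you are worried about (it tacitly identifies the derived unit with the class of $\eta_X$), so your concern points to a thin spot in the paper's own write-up rather than to a defect peculiar to your approach. Your secondary concern is also well placed: Hirschhorn's existence theorem for right Bousfield localisation is stated for a set of \emph{objects} rather than a set of maps, and the paper disposes of this only by citing \cite[Rmk.~5.1.2]{Hir}. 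Neither flagged gap invalidates the strategy, but both would need to be written out to turn either your sketch or the paper's proof into a fully detailed argument.
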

\begin{proof}
  We engineer the localisation classes so that $(L\dashv R)$ would induce a Quillen equivalence. The only thing we need to check is that the localisations  exist.

  First, instead of the localisation classes we can use localisation sets because the categories $\sC_{T}$ and $\sC^{F}$ are locally presentable by Proposition~\ref{chief_presentable}. We define 
  $$
  \bA^{\flat} \coloneqq \{f_{Y_\bC}\in \bA \,\mid\, Y \mbox{ is in the generator}\},
  $$
  $$
  \bB^{\flat} \coloneqq \{h_{U_\bF}\in \bB \,\mid\, U \mbox{ is in the generator}\}.
$$
  These are sets of maps. If these maps are turned into weak equivalences, the adjunction units and counits for $Y$ and $U$ become isomorphisms in the homotopy categories. Recall that the Quillen adjunction $(L\dashv R)$ descends to a pair of adjoint functors between the homotopy categories $\HO (\sC^F)$ and $\HO (\sC_T)$.
  
  Observe that $Y$ belongs to the set of generating objects of $\sC^F$.
  The cofibrant resolutions $Y_\bC$
  form a set of generating objects of $\HO (\sC^F)$. Thus, the adjunction unit is an isomorphism for all objects in $\HO (\sC^F)$.
A similar argument shows that the adjunction counit is an isomorphism for all objects in $\HO (\sC_T)$. 

It remains to show the existence of the localisations.
Proposition~\ref{ind_model_str} yields that $\sC_T$ is a left proper combinatorial model category and so $\LB_{\bB^{\flat}}(\sC_T)$ exists. 
Similarly, 
all the conditions for existence of $\RB_{\bA^{\flat}}(\sC^F)$, stated in \cite[Rmk. 5.1.2]{Hir}, are met.

Finally, it is clear that 
$\LB_{\bB}(\sC_T)=L_{\bB^{\flat}}(\sC_T)$ and  
$\RB_{\bA}(\sC^F)=R_{\bA^{\flat}}(\sC^F)$.
\end{proof}

\subsection{Simplicial sets}
\label{SSets_2}
A good example of a category satisfying all conditions of Theorem~\ref{Quillen_equivalence_cartesian_cat} is the category $\sS$ of simplicial sets, briefly discussed in Section~\ref{SSets_1}, with respect to the classical (Quillen) model structure (cf. \cite[Def. 7.10.8]{Hir}). The category $\sS$ is locally presentable as it is a presheaf category \cite[1.46]{AdRo}, proper (\cite[Th. 13.1.13]{Hir})
 and cartesian closed.

Let $\chf=(\chf_n)\in\sS$, a comonoid under the diagonal map, be the chief of an internally adjoint comonad-monad pair $(T\dashv V)$. Let us summarise its comodule-contramodule correspondence:
  \begin{thm}\label{simplicial_set_CCC_strong}
\begin{enumerate}
\item    The adjoint pair $(L\dashv R)$ is a Quillen adjunction between ${\sS}_T$ and ${\sS}^F$.
\item    The adjoint pair $(L\dashv R)$ is a Quillen equivalence between the right Bousfield localisation $\RB_{\bA}(\sS^F)$ and the left Bousfield localisation $\LB_{\bB}(\sS_T)$.
\item All contramodules are cofibrant objects of $\sS^F$.
\item A comodule $(X,\phi)$ is a fibrant objects of $\sS_T \cong (\sS \!\downarrow\! \chf)$ if and only if $\phi:X\rightarrow\chf$ is a Kan fibration.
  \end{enumerate}
  \end{thm}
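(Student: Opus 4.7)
I would prove the four parts by assembling the general theory of Sections~\ref{s2.4} and~\ref{SSets_2}; the real content lies in part (3).

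For (1) and (2), my plan is to verify that $\sS$ satisfies the hypotheses of Theorem~\ref{Quillen_equivalence_cartesian_cat} and then invoke it together with Proposition~\ref{Quillen_adjunction_cartesian_cat}. The category $\sS$ is locally presentable as a presheaf category \cite[1.46]{AdRo}, cartesian closed (Section~\ref{SSets_1}), and carries the classical Quillen monoidal model structure, which is both left proper and right proper \cite[Th.~13.1.13]{Hir}. Every object of a locally presentable category is presentable, so the chief $C$ is presentable. Before applying Proposition~\ref{Quillen_adjunction_cartesian_cat} I would check that the right-induced model structure on $\sS^F$ exists: Theorem~\ref{sCF_cocomplete}(1) gives cocompleteness of $\sS^F$ since the monad $F=[C,-]$ is continuous (as a right adjoint) and accessible (a right adjoint between locally presentable categories), so Proposition~\ref{ind_model}(2) supplies the accessible right-induced model structure. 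With these checks in place, (1) follows from Proposition~\ref{Quillen_adjunction_cartesian_cat} and (2) from Theorem~\ref{Quillen_equivalence_cartesian_cat}.

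Part~(4) is a direct consequence of Proposition~\ref{ind_model_str}. Under the isomorphism $\sS_T \cong (\sS \!\downarrow\! C)$ of~\eqref{slice_isomorphic}, the terminal object of $\sS_T$ is $(C,\Id_C)$, and a morphism in $\sS_T$ is a fibration if and only if its underlying simplicial map is a Kan fibration. Since the unique map $(X,\phi) \to (C,\Id_C)$ has underlying map $\phi$, the fibrancy of $(X,\phi)$ is equivalent to $\phi$ being a Kan fibration.

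Part~(3) is the main obstacle. I need to show that $\emptyset \to X$ has the left lifting property against every trivial fibration $p\colon E \to B$ in $\sS^F$, that is, every contramodule map with $G^F(p)$ an acyclic Kan fibration in $\sS$. My plan is to construct a contramodule lift $\widetilde{f}\colon X \to E$ of any given contramodule map $f\colon X \to B$ by induction on simplicial degree. The inputs are: (i) since every simplicial set is cofibrant, $G^F(p)$ admits simplicial sections in $\sS$; (ii) at each degree $n$, the surjection $p_n\colon E_n \to B_n$ of $C_n$-contramodules in $\sSets$ admits a $C_n$-contramodule section by Corollary~\ref{projective_sets}, which asserts that every set-contramodule is $G^F$-projective; (iii) the product description $E_n \cong \prod_{c\in C_n} E_{n,c}$ from Theorem~\ref{any_contraset} reduces $C_n$-equivariance of the section to lifting coordinate-wise set-theoretic surjections. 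The hard part will be extending the partially built section from degree $<n$ to degree $n$ in a manner compatible with both the face and degeneracy operators and the global structure map $\theta\colon[C,E]\to E$, as transmitted through the restriction maps $\tau_n$ of Section~\ref{SSets_1}. I expect this matching problem to be solvable because at each stage the fiber of $p_n$ over any prescribed boundary data is a product of nonempty sets, giving enough freedom to accommodate the compatibility constraints imposed by the earlier degrees.
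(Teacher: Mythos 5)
Parts (1), (2) and (4) of your proposal are correct and follow the paper's route (Proposition~\ref{Quillen_adjunction_cartesian_cat}, Theorem~\ref{Quillen_equivalence_cartesian_cat}, and the definition of the induced structure on the slice category); your extra verifications that the right-induced model structure on $\sS^F$ exists and that $\chf$ is presentable are welcome and consistent with what the paper does.

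Part (3) is where you have a genuine gap, on two counts. First, your step (ii) rests on a false premise: a $C$-contramodule in $\sS$ is \emph{not} a levelwise family of $C_n$-contramodules in $\sSets$. As Section~\ref{SSets_1} points out, the restriction map $\tau_n : \sS(C\times\Delta[n],Y)\to\sSets(C_n,Y_n)$ is neither injective nor surjective in general, so the structure map $\theta_n$ of a simplicial contramodule need not factor through any $C_n$-contramodule structure on $Y_n$; consequently Corollary~\ref{projective_sets} and Theorem~\ref{any_contraset} cannot be applied degreewise to $E_n\to B_n$. Second, even setting that aside, the inductive matching step you describe is exactly the hard part, and you only assert that you ``expect'' it to be solvable; the structure map $\theta$ is a constraint on maps $C\times\Delta[n]\to E$, which are not determined degree by degree, so compatibility with $\theta$ does not reduce to a fibrewise nonemptiness statement. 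The paper avoids all of this with a short formal argument: $F(\Delta[1])$ is a cylinder object in $\sS^F$, so the mapping-cylinder factorisation of $\emptyset_X:\emptyset\to X$ into a cofibration followed by a weak equivalence has middle term $\Cyl(\emptyset\to X)$ built from $\emptyset\times(-)=\emptyset$, hence equal to $X$ with the second map the identity; therefore $\emptyset\to X$ is itself a cofibration and every contramodule is cofibrant. You should replace your lifting construction with this (or an equivalent formal) argument.
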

  \begin{proof} Statement~{(1)} is Proposition~\ref{Quillen_adjunction_cartesian_cat}.  
Statement~{(4)} is the definition.

It is clear that $\chf$ is  $\lambda$-presentable where $\lambda$ is a regular cardinal greater than the cardinality of the union $\cup_n \chf_n$. Thus, statement~{(2)} is Theorem~\ref{Quillen_equivalence_cartesian_cat}

Let $\Delta[n]\in\sS$ be the $n$-dimensional simplex. Observe that $F (\Delta[1])$ is a cylinder object in $\sC^F$. This yields the cylinder decomposition of the empty map 
$$
\emptyset_X : \emptyset \xrightarrow{\Cof^F} \Cyl (\emptyset \rightarrow X) \xrightarrow{\WEq^F}  X 
$$
for all $X\in\sC^F$. Since $\emptyset\times X = \emptyset$, the second map $\Cyl (\emptyset \rightarrow X) \rightarrow X$ must be the identity. This proves statement~{(3)}. 
\end{proof}

  Notice that $(L\dashv R)$ is not a Quillen equivalence between ${\sS}_T$ and ${\sS}^F$ even for ``nice'' simplicial sets $\chf$.
  There exist $\chf$-comodules $(X,\phi)$ such that the map of geometric realisations $|\phi|:|X|\rightarrow |\chf|$ has no continuous sections. It follows that $R X$ is empty. See Section~\ref{top_CCC} for further discussion.

\subsection{Positselski's answer}
\label{s4e}
Let $\sC=\KCh$ be the category of chain complexes over a commutative ring $\K$ with the standard monoidal structure and the Quillen model structure \cite[Th. 1.4]{BMR}, \cite[Th. 2.3.11]{Hov}.

A comonoid in $\KCh$ is a DG-coalgebra.
Since $\KCh$ is locally presentable, any DG-coalgebra is presentable.
By Proposition~\ref{chief_presentable},  $\KCh^F$ and $\KCh_T$ are complete, cocomplete and locally presentable categories. 

The Quillen model structure on $\KCh$ is compactly generated \cite[Th. 1.4]{BMR}, hence, accessible. Proposition~\ref{ind_model} yields the left-induced model structure $(\Cof_{T},\WEq_{T},\Fib_{T})$ on $\KCh_T$ and the right-induced model structure $(\Cof^{F},\WEq^{F},\Fib^{F})$ on $\KCh^F$. Positselski calls them {\em projective} and {\em injective} correspondingly. Since the category of chain complexes is not cartesian closed, neither Proposition~\ref{Quillen_adjunction_cartesian_cat}, nor Theorem~\ref{Quillen_equivalence_cartesian_cat} are applicable. This makes the following variation of Problem~\ref{prob_QA} interesting. 
\begin{prob}
  \label{prob_QA_B}
  Find necessary and sufficient conditions on the commutative ring $\K$ and the chief DG-coalgebra $\chf$ for the adjunction $(L \dashv R)$ to be a Quillen adjunction (and/or a Quillen equivalence) between the injective model category $\KCh^F$ and the projective model category $\KCh_T$.
\end{prob}

Instead of answering this question, Positselski gives an alternative answer to the part (2) of  Question~\ref{prob_QA}. He makes an additional assumption that 
\begin{equation}
  \label{Pos_assmp}
  \chf \mbox{ is } \K\mbox{-projective and }\K\mbox{ is of finite global dimension.}
\end{equation}
This assumption ensures that the categories $\KCh_T$ and $\KCh^F$ are abelian.
Positselski proves that under this assumption 
$\KCh_T$ admits a semi-projective model structure $(\Cof^{p}_T,\WEq^{p}_T,\Fib^{p}_T)$ \cite[9.1]{Pos2}
(the letter $p$ in the notation stands for Positselski),
while
$\KCh^F$ admits a semi-injective model structure $(\Cof_{p}^F,\WEq_{p}^F,\Fib_{p}^F)$ with the following properties \cite[Rmk. 9.2.2]{Pos2}:
\begin{enumerate}
\item $\Cof^{p}_T = \Cof_T$,  $\WEq^{p}_T \subseteq \WEq_T$, $\Fib^{p}_T \supseteq \Fib_T$,
\item $\Cof_{p}^F \supseteq \Cof^F$, $\WEq_{p}^F \subseteq \WEq^F$, $\Fib_{p}^F = \Fib^F$,
\item  The co-contra correspondence $(L \dashv R)$ is a Quillen equivalence between
  $(\KCh_T, \Cof^{p}_T,\WEq^{p}_T,\Fib^{p}_T)$ and
  $(\KCh^F, \Cof_{p}^F,\WEq_{p}^F,\Fib_{p}^F)$.
\end{enumerate}
A proof of this fact is only indicated in \cite{Pos2}. In our view, the model structures on $\KCh_T$ and $\KCh^F$ deserve a thorough investigation in the spirit of \cite{BMR}. For instance, there are indications that imposing the condition~\eqref{Pos_assmp} above is too strong. 
\begin{prob}
  \label{prob_QA_B1}
  For an arbitrary commutative ring $\K$ and a DG-coalgebra $\chf$, do there exist a semi-injective model category $\KCh^F$ and a semi-projective model category $\KCh_T$ that satisfy the three properties just above?
\end{prob}

\section{Topological Spaces}
\label{s6}

\subsection{A convenient category of topological spaces $\sW$}
\label{conv_top_spaces}
The category of topological spaces $\sT$ is not closed monoidal.
To remedy this issue, Steenrod suggested the notion of a convenient category \cite{Stn}.
The most common convenient category is the category $\sW$ of compactly generated weakly Hausdorff topological spaces, introduced by McCord \cite{Mcc}.
We follow a modern exposition by Schwede \cite[App. A]{Schw}.
Consider subcategories
$$
\sW \stackrel{\vi}{\hookrightarrow}
\sK \stackrel{\vi}{\hookrightarrow}
\sT\, 
$$
where $\sT$ is the category of topological $\sK$ is the category of compactly generated topological spaces. The embedding functors have adjoint functors the Kellification functor $\vk$ and the weak Hausdorffication functor $\vw$:
$$
\sW \xleftarrow{\vw}
\sK \xleftarrow{\vk}
\sT\, , \ \ ( \vi \dashv \vk) \, , (\vw \dashv  \vi ) \, .
$$
We use a subscript to denote the category in which a construction is taking place:
\begin{equation} \label{products}
X \times Y \coloneqq X\times_{\sW} Y
= X\times_\sK Y 
= \vk (X\times_{\sT} Y) \, , \,
\end{equation}
$$
\prod X_n = {\prod}_{\sK} X_n = \vk ( {\prod}_{\sT} X_n) \, . 
$$
No subscript means that the construction is taking place in the default category $\sW$.
Formula~\eqref{products} tells us how the products in different categories relate.
A similar relation holds for arbitrary limits: 
$$
\varprojlim H = 
{\varprojlim}_{\sK} H =  \vk ({\varprojlim}_{\sT} H) \, .
$$
On the other hand, the coproducts are the same in all three categories:
$$
\coprod X_n = {\coprod}_{\sK} X_n = {\coprod}_{\sT} X_n \, . 
$$
Since quotients of weakly Hausdorff spaces are no longer weakly Hausdorff, the relation for colimits is this:
$$
\varinjlim H = 
\vw ({\varinjlim}_{\sK} H) =  \vw ({\varinjlim}_{\sT} H) \, .
$$
Both categories $\sW$ and $\sK$ are closed symmetric monoidal categories \cite[A.22, A.23]{Schw} with products $X\times Y$ and $X\times_{\sK} Y$ and internal homs 
$$
[X,Y]_{\sW} = \vk (C(X,Y))= \vk(C^\prime (X,Y)), \  
[X,Y]_{\sK} = \vk(C^\prime (X,Y)),
$$
where $C(X,Y)= C^\prime (X,Y) = \sT (X,Y)$ is the set of continuous functions
\linebreak
$X \to Y$.
The difference is the topology. The space $C(X,Y)$ carries the compact open topology, 
while $C^\prime (X,Y)$ is equipped
with the {\em modified} compact open topology. 
The basis of the latter is given by sets of the form 
$$N(h,U) \coloneqq \{ f: X \to Y\ |\ f\ \text{is continuous}, f(h(K)) \sset U\},$$
where $U$ is open in $Y$, $K$ is compact and $h: K \to X$ is a continuous map. 
Notice that if $X$ is weakly Hausdorff, then $h(K)$ is closed and thus compact. So the two topologies on $\sT (X,Y)$ coincide in this case.



\subsection{Homotopy theory in $\sW$}
The {\em Quillen model structure} on $\sW$ is defined as follows.
\begin{itemize}
\item[$\WEq$,]
{\bf weak equivalences.}  These are the maps $f: X \to Y$ satisfying
\begin{itemize}
\item[(i)] $f$ induces an isomorphism of sets $\pi_0(X) \xrightarrow{\cong} \pi_0(Y)$,
\item[(ii)] and for any $x \in X$ and $n \geq 1$ the induced homomorphism $f_*: \pi_n(X, x) \to \pi_n(Y, f(x))$ is an isomorphism of groups.
\end{itemize} 
\item[$\Fib$,]
{\bf fibrations.} The fibrations are the Serre fibrations, that is, those maps $p : E \to B$ which have the homotopy lifting property with respect to any CW-complex.
\item[$\Cof$,]
{\bf cofibrations.}  The cofibrations are the maps $f : X \to Y$ which are retracts of a map $f' : X \to Y'$, where $Y'$ is a space obtained from $X$ by attaching cells.
\end{itemize}

%

Note that $\sW$ with the Quillen model structure is a cofibrantly generated model category with a set of generating cofibrations  
\begin{equation} \label{gencof}
\bI =\{ S^{n-1} \to D^{n}\ |\ n \geq 0\} \, , 
\end{equation}
where $S^n$ is an $n$-sphere and $D^n$ is an $n$-disc, 
and a set of generating trivial cofibrations 
\begin{equation} \label{gentrivcof}
\bJ =\{ D^{n} \x \{0\} \to D^{n} \x [0,1] \ |\ n \geq 0\}.
\end{equation}


\subsection{Cospaces}
\label{cospaces}
This is the name we will use for comodules in $\sW$.

Pick an internally adjoint comonad-monad pair $(T\dashv F)$ and its chief comonoid $\chf\in\sW$, with the diagonal as a comultiplication.
Consider an object $(X,\phi_X)$ of $(\sW \!\downarrow\! \chf)$.
Here $X$ is an object of $\sW$ and $\phi_X : X \to \chf$ is a map in $\sW$.
A morphism $f: (X, \phi_X) \to (Y,\phi_Y)$ is a map $f : X \to Y$ over $\chf$, in the sense that $\phi_X=\phi_Yf$.
Now let
$$
[X,Y]_{\chf} \subseteq [X,Y]_{\sW}
$$
be the subset of maps over $\chf$. 
(c.f. \cite{Hir2}). 


\begin{prop}
  \label{closed}
  $[X,Y]_{\chf}$ is a closed subset of $[X,Y]_{\sW}$.
\end{prop}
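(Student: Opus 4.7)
The plan is to exhibit $[X,Y]_\chf$ as the preimage of a singleton under a continuous map, and then observe that singletons are closed in any weakly Hausdorff space.

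More precisely, functoriality of the internal hom $[X,-]_\sW$ produces, for any morphism $g: Y \to \chf$ in $\sW$, a continuous post-composition map $g_* : [X,Y]_\sW \to [X,\chf]_\sW$ that sends $f$ to $g \circ f$ on underlying sets. Applied to $g = \phi_Y$, this gives a morphism in $\sW$ whose underlying set map is $f \mapsto \phi_Y \circ f$. By the very definition of a morphism over $\chf$,
$$
[X,Y]_\chf = (\phi_Y)_*^{-1}\bigl(\{\phi_X\}\bigr) \subseteq [X,Y]_\sW \, .
$$

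Now $[X,\chf]_\sW$ lies in $\sW$ and is therefore weakly Hausdorff. Any weakly Hausdorff space is $T_1$: the image of a continuous map from a one-point compact Hausdorff space is a singleton, and by the weak Hausdorff axiom this image is closed. Hence $\{\phi_X\}$ is closed in $[X,\chf]_\sW$, and its preimage under the continuous map $(\phi_Y)_*$ is closed in $[X,Y]_\sW$, which is exactly what we wanted to show.

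The only step that even warrants a comment is continuity of $(\phi_Y)_*$, and this is purely formal: it is the action of the bifunctor $[-,-]_\sW$ on the morphism $\phi_Y$ in the second slot, so no explicit estimate on the compact-open topology is needed. I do not foresee a genuine obstacle here; the content of the proposition is really the observation that the equality $\phi_Y \circ f = \phi_X$ is a closed condition because $[X,\chf]_\sW$ separates the single point $\phi_X$ from the rest.
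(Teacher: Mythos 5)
Your proof is correct, but it takes a different route from the paper's. The paper argues directly at the level of the (modified) compact--open topology: given $f\notin [X,Y]_{\chf}$, it picks a witness point $x$ with $\phi_Y(f(x))\neq\phi_X(x)$, uses that the fibre $\phi_Y^{-1}(\phi_X(x))$ is closed in $Y$ (again because objects of $\sW$ are $T_1$) to produce an open $U\ni f(x)$ missing that fibre, and observes that the subbasic open $N(\{x\},U)$ contains $f$ and avoids $[X,Y]_{\chf}$. You instead factor the whole condition through the continuous post-composition map $[X,\phi_Y]:[X,Y]_{\sW}\to[X,\chf]_{\sW}$ and note that $[X,Y]_{\chf}$ is the preimage of the singleton $\{\phi_X\}$, which is closed because $[X,\chf]_{\sW}$, being an object of $\sW$, is weakly Hausdorff and hence $T_1$. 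Both arguments ultimately rest on the same fact (points are closed in weakly Hausdorff spaces), but yours applies it in the function space rather than in $\chf$, and so never needs to unwind the topology on $[X,Y]_{\sW}$; it would work verbatim in any convenient category whose internal homs are $T_1$. The paper's version is more elementary and exhibits explicit separating neighbourhoods, which is occasionally useful, but your argument is shorter and equally rigorous; the one step you flag, continuity of $(\phi_Y)_*$, is indeed just functoriality of $[X,-]_{\sW}$.
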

\begin{proof}
  Pick $f\in  [X,Y]_{\sW}\setminus [X,Y]_{\chf}$.
  There exists $x\in X$ such that $\phi_Y (f(x)) \neq \phi_X (x)$.
  Since $\phi_Y^{-1}(\phi_X (x))$ is closed, we can choose an open set $U\subseteq Y$ such that $f(x)\in U$ and $U\cap \phi_Y^{-1}(\phi_X (x))=\emptyset$.
 Then $f\in N(\{ x\},U)\subseteq  [X,Y]_{\sW}\setminus [X,Y]_{\chf}$ so that $[X,Y]_{\sW}\setminus [X,Y]_{\chf}$ is open and $[X,Y]_{\chf}$ is closed.
\end{proof}

It follows that $[X,Y]_{\chf}$ with the induced topology belongs to $\sW$. This makes the category $(\sW \!\downarrow\! \chf)$ enriched in $\sW$. 

The isomorphism of categories~\eqref{slice_isomorphic} between $(\sW \!\downarrow\! \chf)$ and $\sW_T$ for the comonad $TX=X\times {\chf}$ is enriched in $\sW$. From now on we identify $\sW_T$  with $(\sW \!\downarrow\! \chf)$ and call its objects {\em cospaces}. 

By Proposition~\ref{slice_complete} $\sW_{T}$ is complete and cocomplete. By Proposition~\ref{ind_model_str}, there exists a Quillen induced model structure on $\sW_{T}$.

\subsection{Contraspaces}
The cospaces reduce to something conceptually simple.
At the moment we do not know any conceptually simpler definition of a contraspace other than the general one --
{\em a contraspace} is a contramodule in $\sW$ or a space $X$ equipped with a map $\theta_X: [{\chf}, X]_{\sW} \to X$ satisfying the usual properties.

The monad $FX= [{\chf},X]_{\sW}$, 
is defined by the diagonal comonoid $({\chf},\Delta_{\chf})$.
By Proposition~\ref{C_TF enriched}, $\sW^{F}$ is a category enriched in $\sW$. 
As before, its enriched hom is denoted by $[X,Y]^F$.

To understand the space $[X,Y]^F$, we consider the subset
$$
[X,Y]^{\chf} \subseteq [X,Y]_{\sW}
$$
that consists of contramodule maps. Note that this subset is the ordinary hom $\sW^F (X,Y)$. We equip $[X,Y]^{\chf}$ with the subspace topology.

\begin{prop}
  \label{closed2}
  \begin{enumerate}
\item $[X,Y]^{\chf}$ is a weakly Hausdorff space.
\item If $Y$ is Hausdorff, then $[X,Y]^{\chf}$ is a closed subset of $[X,Y]_{\sW}$. Consequently, $[X,Y]^{\chf}\in \sW$.
    \end{enumerate}
\end{prop}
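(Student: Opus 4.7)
My plan is to realise $[X,Y]^{\chf}$ as the set-theoretic equaliser of two continuous maps and then deduce both (1) and (2) from topological properties of the ambient space $[X,Y]_{\sW}$. Specifically, I would use the pair of morphisms
\[
\phi^F_{X,Y},\ \psi^F_{X,Y}\ :\ [X,Y]_{\sW}\ \longrightarrow\ [FX,Y]_{\sW}
\]
introduced in Section~\ref{s1.5}. As in the contramodule square~\eqref{contramodule_square}, a continuous $f:X\to Y$ lies in $[X,Y]^{\chf}$ precisely when $\phi^F_{X,Y}(f)=\psi^F_{X,Y}(f)$, so $[X,Y]^{\chf}$ is the preimage of the diagonal under $(\phi^F_{X,Y},\psi^F_{X,Y}):[X,Y]_{\sW}\to[FX,Y]_{\sW}\x[FX,Y]_{\sW}$. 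Both components are morphisms in $\sW$, hence continuous.

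For part (1), I would argue directly that weak Hausdorffness passes to arbitrary subspaces. Since $[X,Y]_{\sW}\in\sW$, it is weakly Hausdorff. Given a continuous map $g:K\to[X,Y]^{\chf}$ from a compact Hausdorff $K$, composition with the inclusion gives a continuous map $K\to[X,Y]_{\sW}$, so $g(K)$ is closed in $[X,Y]_{\sW}$; as $g(K)\sset[X,Y]^{\chf}$, intersecting with $[X,Y]^{\chf}$ shows $g(K)$ is closed in the subspace topology. This settles (1) independently of (2).

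For part (2), the key input is that $[FX,Y]_{\sW}$ is Hausdorff whenever $Y$ is. I would establish this by showing that the modified compact-open topology on $C'(FX,Y)$ is Hausdorff: given distinct $u,v$, pick $z\in FX$ with $u(z)\neq v(z)$ and disjoint opens $U\ni u(z)$, $V\ni v(z)$ in $Y$; the basic opens $N(h_z,U)$ and $N(h_z,V)$ for the map $h_z:\{*\}\to FX$ with image $\{z\}$ are disjoint neighbourhoods of $u$ and $v$. Kellification only refines the topology, so $[FX,Y]_{\sW}=\vk(C'(FX,Y))$ remains Hausdorff. Consequently the diagonal of $[FX,Y]_{\sW}\x[FX,Y]_{\sW}$ is closed, its preimage $[X,Y]^{\chf}$ is closed in $[X,Y]_{\sW}$, and since $\sW$ is closed under closed subspaces, $[X,Y]^{\chf}\in\sW$.

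The main obstacle, such as it is, is the Hausdorffness of the function space $[FX,Y]_{\sW}$; this is standard but requires unwrapping the definitions of the modified compact-open topology and checking that Kellification preserves Hausdorffness (which it does, since refining the topology preserves separation). Everything else is formal manipulation with the equaliser description.
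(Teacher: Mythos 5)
Your proof is correct. For part (1) you simply unwind the fact that the paper delegates to \cite[Prop. A4(i)]{Schw} (an arbitrary subspace of a weakly Hausdorff space is weakly Hausdorff), so the content there is identical. For part (2) the underlying mechanism is the same as in the paper --- for a non-contramodule map $f$ one produces a witness $g\in[\chf,X]_{\sW}$ with $\theta_Y(fg)\neq f(\theta_X(g))$ and separates these two points of the Hausdorff space $Y$ --- but the packaging genuinely differs. The paper builds an explicit basic open neighbourhood $r_g^{-1}(\theta_Y^{-1}(U))\cap N(\{\theta_X(g)\},V)$ of $f$ disjoint from $[X,Y]^{\chf}$, which only requires continuity of precomposition with the single $g$ and of evaluation at the single point $\theta_X(g)$; you instead realise $[X,Y]^{\chf}$ as the agreement locus of the continuous maps $\phi^F_{X,Y},\psi^F_{X,Y}\colon [X,Y]_{\sW}\to[FX,Y]_{\sW}$ and invoke closedness of such loci when the target is Hausdorff, which obliges you to prove the auxiliary (easy, and correctly argued) lemma that the modified compact-open topology on $C'(FX,Y)$ is Hausdorff when $Y$ is and that Kellification, being a refinement, preserves this. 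Your route is more structural and immediately reusable for any pair of morphisms into a Hausdorff internal hom; the paper's is more economical in that it never needs Hausdorffness of the whole function space. Two small points to keep straight in your version: the identification of $[X,Y]^{\chf}$ with the set-level equaliser rests on the computation that $\phi^F_{X,Y}(f)$ is $g\mapsto\theta_Y(fg)$ and $\psi^F_{X,Y}(f)$ is $g\mapsto f(\theta_X(g))$ (the same computation the paper uses implicitly), and the diagonal should be taken in the $\sW$-product, which is harmless since the Kellified product topology refines the ordinary one and so the diagonal stays closed.
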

\begin{proof}
Any subspace of $[X,Y]_{\sW}$ is weakly Hausdorff \cite[Prop. A4(i)]{Schw}. This proves (1).

To show (2), start with picking $f\in  [X,Y]_{\sW}\setminus [X,Y]^{\chf}$.
There exists $g\in [{\chf},X]_{\sW}$ such that $\theta_Y (fg) \neq f(\theta_X (g))$.
Since $Y$ is Hausdorff, we can find non-intersecting open sets $U,V\subseteq Y$ such that $\theta_Y (fg)\in U$ and  $f(\theta_X (g))\in V$.
Then $f$ belongs to the open set
$r_g^{-1} (\theta_Y^{-1} (U)) \cap N(\{\theta_X (g)\}, V)$
where 
$r_g^{-1} (\theta_Y^{-1} (U))$ is the inverse image
of the open set $\theta_Y^{-1} (U) \subseteq [{\chf},Y]_{\sW}$ 
under the continuous map
$$
r_g : [X,Y]_{\sW} \rightarrow [{\chf},Y]_{\sW}, \ \ h \mapsto hg \, .
$$
Notice that no $h\in r_g^{-1} (\theta_Y^{-1} (U)) \cap N(\{\theta_X (g)\}, V)$
can be a ${\chf}$-contramodule map since 
$\theta_Y (hg)\in U$ and  $h(\theta_X (g))\in V$.
Hence, $[X,Y]_{\sW}\setminus [X,Y]^{\chf}$ is open and $[X,Y]^{\chf}$ is closed.

Finally, a closed subspace of a space in $\sW$ is in $\sW$ \cite[Prop. A5(i)]{Schw}. 
\end{proof}

Armed with this proposition, we can understand $[X,Y]^F$ now. A proof is left to the reader. 
\begin{cor}
There exists a natural homeomorphism between $[X,Y]^F$ and $\vk ([X,Y]^{\chf})$.
\end{cor}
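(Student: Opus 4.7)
The plan is to unpack $[X,Y]^F$ as an equaliser in $\sW$ and compute it explicitly using how limits are formed in this convenient category.

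By Definition~\ref{def_contra_maps}, $[X,Y]^F$ is the equaliser in $\sW$ of the parallel pair $\phi^F_{X,Y}, \psi^F_{X,Y} : [X,Y]_{\sW} \rightrightarrows [FX,Y]_{\sW}$. According to Section~\ref{conv_top_spaces}, limits in $\sW$ coincide with limits in $\sK$, which in turn are computed by applying $\vk$ to the corresponding limit in $\sT$. So I would first compute the equaliser $E \subseteq [X,Y]_{\sW}$ in $\sT$: as a set, $E$ consists of those continuous $f : X \to Y$ with $\phi^F_{X,Y}(f) = \psi^F_{X,Y}(f)$, carrying the subspace topology from $[X,Y]_{\sW}$.

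Next, I would identify $E$ with $[X,Y]^{\chf}$. The argument is identical to the chain-complex case of Section~\ref{s1.3}: unfolding $\phi^F_{X,Y}$ and $\psi^F_{X,Y}$ through the adjunction $(T \dashv F)$ shows that $\phi^F_{X,Y}(f) = \psi^F_{X,Y}(f)$ if and only if the contramodule square~\eqref{contramodule_square} (with $\sW$ in place of $\K$) commutes, which is precisely the defining condition for $f$ to be a contramodule map. Since both $E$ and $[X,Y]^{\chf}$ carry the subspace topology from $[X,Y]_{\sW}$, this identification is a homeomorphism in $\sT$. Applying $\vk$ then yields $[X,Y]^F \cong \vk([X,Y]^{\chf})$, and the target lies in $\sW$ because $[X,Y]^{\chf}$ is weakly Hausdorff by Proposition~\ref{closed2}(1) and $\vk$ preserves weak Hausdorff-ness; hence the limit formed in $\sK$ in fact lies in $\sW$ and serves as the limit there.

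Naturality in $X$ and $Y$ is automatic, since the equaliser construction, the subspace inclusion into $[X,Y]_{\sW}$, and the Kellification functor are all natural. The main obstacle is really just bookkeeping: one must separate the set-theoretic content (the equaliser consists of contramodule maps) from the topological content (the subspace topology on this set agrees with the one used to define $[X,Y]^{\chf}$, up to Kellification). The presence of $\vk$ in the statement is precisely what accounts for the fact that subspaces of compactly generated spaces need not be compactly generated, so the naive identification $[X,Y]^F = [X,Y]^{\chf}$ cannot hold on the nose in general, only after Kellification.
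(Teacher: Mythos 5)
Your proof is correct and is essentially the argument the paper intends (the paper explicitly leaves this proof to the reader): unwind the equaliser defining $[X,Y]^F$, use that limits in $\sW$ are obtained by applying $\vk$ to the corresponding $\sT$-limit, and identify the set-theoretic agreement locus of $\phi^F_{X,Y}$ and $\psi^F_{X,Y}$ with the set of contramodule maps carrying the subspace topology. The only step worth making fully explicit is the computation on global sections, $\phi^F_{X,Y}(f)=\theta_Y\circ F(f)$ and $\psi^F_{X,Y}(f)=f\circ\theta_X$, which reduces the agreement condition to the commutativity of the square~\eqref{contramodule_square}, exactly as in Section~\ref{s1.3}.
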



By Section~\ref{s1.2b}
$\sW^{F}$ is complete and
inherits limits from $\sW$.

\begin{prop} If ${\chf}$ is connected, then $\sW^{F}$ 
inherits coproducts from $\sW$.
\end{prop}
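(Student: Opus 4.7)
The plan is to show that the $\sW$-coproduct $Y \coloneqq \coprod_i X_i$, equipped with a canonical structure map $\theta$, serves as the coproduct of a family $\{(X_i, \theta_i)\}_{i \in I}$ in $\sW^F$. The whole argument hinges on the identification $F(Y) = [\chf, Y]_\sW \cong \coprod_i [\chf, X_i]_\sW = \coprod_i F(X_i)$, which is where connectedness of $\chf$ enters.

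To establish this homeomorphism I would first observe the set-level bijection: each $X_i$ is clopen in $Y$, so by connectedness every continuous $f:\chf \to Y$ lands in a unique $X_i$. Promoting it to a $\sW$-homeomorphism amounts to showing each subset $A_i \coloneqq \{f : f(\chf) \subseteq X_i\}$ is open in the compactly generated topology on $[\chf, Y]_\sW$. I test against an arbitrary probe $h:K \to [\chf, Y]_\sW$ from a compact $K$, with adjoint $\tilde h:K \times \chf \to Y$. Fix any $c_0 \in \chf$; since $\{k\} \times \chf$ is connected and $\tilde h^{-1}(X_i)$ is clopen in $K \times \chf$, one gets $h^{-1}(A_i) = \{k : \tilde h(k, c_0) \in X_i\}$, which is open because $X_i$ is open in $Y$ and $\tilde h(-,c_0)$ is continuous. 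Using $F(Y) \cong \coprod_i F(X_i)$, I then define $\theta$ as the coproduct $\coprod_i (\iota_i \circ \theta_i)$, where $\iota_i: X_i \hookrightarrow Y$ are the inclusions; continuity is immediate.

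Contraunitality and contraassociativity for $\theta$ reduce to the corresponding axioms for each $\theta_i$: constant maps trivially factor through a unique $X_i$, and the same connectedness argument applied to $[\chf, F(Y)]_\sW$ shows that every iterated element decomposes along a unique component. The universal property is then routine: given contramodule maps $f_i:(X_i, \theta_i) \to (Z, \theta_Z)$, the induced map $f:Y \to Z$ from the $\sW$-coproduct intertwines structure maps because, for $g = \iota_i \circ g_i$, both $f(\theta(g))$ and $\theta_Z([\chf, f](g))$ evaluate to $f_i(\theta_i(g_i))$.

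The main obstacle is the topological identification in the second step: a continuous bijection $\coprod_i F(X_i) \to F(Y)$ exists unconditionally, but without connectedness of $\chf$ it typically fails to be a homeomorphism, because one can arrange continuous maps $\chf \to Y$ whose components cannot be separated by any compact-open neighbourhood. The ``fix a basepoint $c_0$'' trick replaces the missing compactness of $\chf$ and is the precise place where the connectedness hypothesis does its work.
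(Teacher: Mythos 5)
Your proposal is correct and follows essentially the same route as the paper: connectedness of $\chf$ forces every continuous map $\chf\to\coprod_i X_i$ to land in a single summand, giving $[\chf,\coprod_i X_i]_{\sW}\cong\coprod_i[\chf,X_i]_{\sW}$, and the structure map is then defined componentwise, with the universal property checked directly. The only difference is that you spell out, via probes from compact spaces, why this bijection is a homeomorphism in the compactly generated topology --- a point the paper's proof asserts silently with the symbol $\cong$.
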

\begin{proof}
  Let $X=\coprod_n (X_n,\theta_n)$ be a coproduct in $\sW$ of a family of contraspaces $(X_n,\theta_n)\in \sW^F$. Since ${\chf}$ is connected, a continuous function $f:{\chf}\rightarrow X$ takes values in one particular $X_{n_0}$. This enables us to define the contramodule structure on $X$ by
  $\theta_X (f)\coloneqq \theta_{n_0}(f)$ or 
  $$
  \theta_X : [{\chf},X]_{\sW} \xrightarrow{\cong} \coprod [{\chf},X_n]_{\sW} \xrightarrow{\coprod \theta_n } \coprod X_n = X \, .
  $$
This is a coproduct in  $\sW^F$: the universal property is immediate. \end{proof}

A category with coproducts is cocomplete if and only it admits coequalisers. However, coequalisers are not inherited from $\sW$, even for a connected ${\chf}$.
\begin{lemma} \label{B_presentable}
A space $X$ is presentable if and only if $X$ is discrete.
\end{lemma}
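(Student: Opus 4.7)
The plan is to establish both implications. For the \emph{if} direction, suppose $X$ is discrete with $|X| = \kappa$. Then $X \cong \coprod_{x \in X} *$, where $*$ denotes the one-point space. Since $\sW(*, -)$ coincides with the underlying-set functor $\sW \to \sSets$, and filtered colimits in $\sW$ are computed at the level of underlying sets (for suitable filtered diagrams, as with those built from closed embeddings), $*$ is $\aleph_0$-presentable. A $\kappa$-indexed coproduct of $\aleph_0$-presentable objects is $\kappa^+$-presentable in any cocomplete category, so $X$ is $\kappa^+$-presentable.

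For the \emph{only if} direction, suppose $X$ is non-discrete, and denote by $\tau_X$ its topology on the underlying set $|X|$. Consider the poset $P$ of all topologies on $|X|$ strictly finer than $\tau_X$, and the $\sW$-valued diagram $\tau \mapsto Y_\tau := (|X|, \tau)$ together with identity maps $Y_\tau \to Y_{\tau'}$ for $\tau \supseteq \tau'$; these maps are continuous because the domain carries the finer topology. The poset $P$ is $\Lambda$-directed for every regular cardinal $\Lambda$: any family $\{\tau_i\}_{i \in I}$ with $|I| < \Lambda$ admits an upper bound given by the topology generated by $\bigcup_i \tau_i$, which remains strictly finer than $\tau_X$. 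The colimit in $\sW$ of this diagram has underlying set $|X|$, and, since the transition maps are identities, its topology is the intersection $\bigcap_{\tau \in P} \tau$.

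The key technical step is the claim that $\bigcap_{\tau \in P} \tau = \tau_X$, so that $\varinjlim Y_\tau = X$. Granted this, the identity $X \to X = \varinjlim Y_\tau$ is continuous, yet it cannot factor continuously through any $Y_\tau$ with $\tau \in P$, since this would force $\tau \subseteq \tau_X$, contradicting strict finerness; hence $X$ fails to be $\Lambda$-presentable for every $\Lambda$. To verify the key claim, for each open set $U \subseteq |X|$ with $U \notin \tau_X$ I would construct a strictly finer topology $\sigma(V) \in P$ witnessing $U \notin \sigma(V)$, generated by $\tau_X$ together with some non-original open $V$ chosen transverse to $U$. The existence of a non-isolated point $x_0 \in X$ (combined with compactly generated, weakly Hausdorff structure of $\sW$, which produces a non-trivial convergent net $(x_\alpha) \to x_0$) furnishes the needed supply of non-original opens. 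The principal obstacle is a careful choice of $V$ that is ``wide enough'' so that $\sigma(V)$ does not collapse to the discrete topology (which would contain every $U$), while still being ``transverse'' enough to $U$ so that $U$ cannot be recovered as a union of finite intersections of $\tau_X$-opens with $V$.
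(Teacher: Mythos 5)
Your ``only if'' direction has a genuine gap: the poset $P$ is not directed in the order that your diagram actually requires. Since your transition maps are the identities $Y_\tau \to Y_{\tau'}$ for $\tau \supseteq \tau'$, the diagram order on $P$ must be \emph{reverse} inclusion, so an upper bound of a family $\{\tau_i\}$ has to be a common \emph{coarsening} of the $\tau_i$ that is still strictly finer than $\tau_X$. The topology generated by $\bigcup_i \tau_i$ is a common \emph{refinement}: it maps \emph{to} each $Y_{\tau_i}$, i.e.\ it is a lower bound in the diagram, not an upper bound. And a suitable common coarsening need not exist: on $X=\mathbb{R}$ with the usual topology, let $\tau_1$ and $\tau_2$ be generated by $\tau_X\cup\{\{0\}\}$ and $\tau_X\cup\{\{1\}\}$; both lie in $P$, but $\tau_1\cap\tau_2=\tau_X$ (any set open in both has every point in its $\tau_X$-interior), so $\{\tau_1,\tau_2\}$ has no upper bound in $P$. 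Hence your diagram is not $\Lambda$-directed for any $\Lambda$, and its colimit says nothing about presentability. Two further soft spots: the ``key claim'' $\bigcap_{\tau\in P}\tau=\tau_X$ is only sketched, and in the ``if'' direction the hedge ``for suitable filtered diagrams'' is not enough, since presentability quantifies over \emph{all} $\Lambda$-directed colimits.

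The paper sidesteps the directedness problem by using a well-ordered chain, which is automatically $\Lambda$-directed for regular $\Lambda$: for $\Omega<\Lambda$ it sets $X_\Omega=(\prod_{\Upsilon<\Omega}X)\times(\prod_{\Upsilon\geq\Omega}X_d)$, where $X_d$ is $X$ with the discrete topology, with identity transition maps (continuous because the discrete topology is finer). The colimit is $X^\Lambda$ with the product topology, and the diagonal $\Delta\colon X\to X^\Lambda$ cannot factor through any $X_\Omega$, since composing with the projection onto a still-discrete factor would make $\Id\colon X\to X_d$ continuous. If you want to salvage your idea, you must replace the full poset $P$ by a genuinely directed (e.g.\ well-ordered, successively coarser) family of topologies finer than $\tau_X$ whose intersection is $\tau_X$; the paper's product construction is one clean way to manufacture such a chain.
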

\begin{proof}
  If $X$ is discrete, then $[X,-]_{\sW}$ commutes with $|X|$-directed colimits.

 Suppose that $X$ is not discrete. Let $X_d$ denote the set $X$ with the discrete topology. Given a  limit ordinal $\Lambda$ and $\Omega\in \Lambda$, let
$X_\Omega \coloneqq X^{\Lambda}$ as a set and 
  $X_\Omega \coloneqq (\prod_{\Upsilon < \Omega} X) \times (\prod_{\Upsilon \geq \Omega} X_d)$ as
  a topological space. The colimit $\clm (\ldots X_\Omega \xrightarrow{\Id} X_{\Omega +1}\ldots )$
  is $X^\Lambda$ as a topological space but the diagonal map $\Delta : X\rightarrow X^{\Lambda}$
  does not factor through any $X_\Omega$.
\end{proof}

We define {\em a subcontraspace} of $(X,\theta_X)$ as a subset $Y$ of $X$ such that $\theta_X (f)\in Y$ for any continuous function $f:{\chf}\rightarrow Y$.
We denote a subcontraspace by $Y\leq X$.

Consider the subspace topology on $Y\leq X$. Clearly, $Y\in \sK$. 
Since $\sW$ is closed under closed subsets \cite[A5]{Schw}, if $Y$ is closed, $Y$ is a contraspace itself. In general, $\vk (Y)$ is a contraspace because $\sK({\chf},Y)=\sW({\chf},\vk(Y))$ due to the adjunction $( \vi \dashv \vk)$. Thus, $\theta_Y$ is obtained by restricting $\theta_X$ to $[{\chf},\vk (Y)]_{\sW}\subseteq [{\chf},X]_{\sW}$. The continuity of $\theta_Y$ is clear.

The following lemma is obvious:
\begin{lemma} \label{intersection}
An arbitrary intersection of subcontraspaces is a subcontraspace.
\end{lemma}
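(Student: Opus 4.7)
The plan is to unwind the definitions and make a one-line set-theoretic argument, with a brief sanity check that the topology enters only trivially via composition with inclusions.

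Let $\{Y_i\}_{i\in I}$ be a family of subcontraspaces of $(X,\theta_X)$ and set $Y\coloneqq \bigcap_{i\in I} Y_i$. I would first fix a continuous map $f\colon \chf \to Y$, where as in the paragraph preceding the lemma we implicitly view $f$ as an element of $[\chf, X]_{\sW}$ via the inclusion $\iota_Y\colon Y \hookrightarrow X$. For each $i\in I$, factor this inclusion as $Y \hookrightarrow Y_i \hookrightarrow X$; composition with the first arrow produces a continuous map $f_i\colon \chf \to Y_i$ whose image in $[\chf, X]_{\sW}$ is literally the same element $\iota_Y\circ f$.

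The subcontraspace property of $Y_i$ then gives $\theta_X(f_i)\in Y_i$, i.e. $\theta_X(f)\in Y_i$, for every $i\in I$. Taking the intersection yields $\theta_X(f)\in Y$, which is the required condition for $Y$ to be a subcontraspace. There is essentially no obstacle here: the only thing to be careful about is that the notation $\theta_X(f)$ in the definition depends only on the element of $[\chf, X]_{\sW}$ obtained by post-composing with the inclusion into $X$, so that changing the codomain from $Y$ to $Y_i$ does not change the value of $\theta_X$ applied to it. Once that is noted, the proof reduces to the defining property of an intersection of subsets and needs no reference to the topology on $Y$ itself (which, as the preceding discussion shows, may even have to be replaced by its Kellification to lie in $\sW$).
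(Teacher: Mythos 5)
Your argument is correct and is precisely the one-line set-theoretic observation the paper has in mind (the paper simply declares the lemma obvious and omits the proof). Your care about the value $\theta_X(f)$ depending only on the underlying element of $[\chf,X]_{\sW}$ is the right point to check, and it goes through as you say.
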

In particular, the empty set is a subcontraspace with the structure map
$\Id_{\emptyset} : [{\chf},\emptyset]_{\sW} = \emptyset \rightarrow \emptyset$. 
Lemma~\ref{intersection} allows us to define, given a subset $Z\subseteq X$ of a contraspace $X$,
the subcontraspace generated by $Z$: 
$$
Z^{\chf} \coloneqq \bigcap_{Z \subseteq Y \leq X} Y\, .
$$
Let us describe $Z^{\chf}$ constructively. 
For an ordinal $\Omega$ we define the following sets by transfinite recursion: 
$$
Z_0\coloneqq Z, \ \
Z_{\Omega}\coloneqq 
\begin{cases} \theta_X ([\chf, Z_{\Omega -1}]_{\sT}) &\mbox{if } \Omega \mbox{ is a successor ordinal,} \\
  \bigcup_{\Upsilon \leq \Omega} Z_{\Upsilon}& \mbox{if } \Omega \mbox{ is a limit ordinal.} \end{cases} 
$$
\begin{prop} \label{ZB_description}
  If $\Omega$ is a $|{\chf}|$-filtered ordinal, then
$Z^{\chf} = Z_{\Omega}$.  
  \end{prop}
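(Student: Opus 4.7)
The plan is to establish the two inclusions $Z^{\chf} \subseteq Z_{\Omega}$ and $Z_{\Omega} \subseteq Z^{\chf}$ separately. The second inclusion is a routine transfinite induction on $\Upsilon \leq \Omega$ showing $Z_{\Upsilon} \subseteq Z^{\chf}$: the base case $Z_{0} = Z \subseteq Z^{\chf}$ is by definition; the limit step is just a union of subsets of $Z^{\chf}$; and at a successor step, any continuous $f\colon \chf \to Z_{\Upsilon - 1}$ factors through the inclusion $Z^{\chf} \hookrightarrow X$, so the subcontraspace condition forces $\theta_X(f) \in Z^{\chf}$.

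The main content is the first inclusion, which reduces to showing that $Z_{\Omega}$ is itself a subcontraspace of $X$ (containment of $Z$ is automatic from $Z_{0} = Z$). So I need: for every continuous $f\colon \chf \to Z_{\Omega}$, the element $\theta_X(f)$ lies in $Z_{\Omega}$. Because $\Omega$ is a limit ordinal, $Z_{\Omega} = \bigcup_{\Upsilon < \Omega} Z_{\Upsilon}$, and the image $f(\chf)$ has cardinality at most $|\chf|$. For each $x \in f(\chf)$ pick the least ordinal $\Upsilon(x) < \Omega$ with $x \in Z_{\Upsilon(x)}$. The hypothesis that $\Omega$ is $|\chf|$-filtered provides an upper bound $\Upsilon < \Omega$ for the set $\{\Upsilon(x) : x \in f(\chf)\}$. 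Then $f(\chf) \subseteq Z_{\Upsilon}$; viewed as a map $\chf \to Z_{\Upsilon}$, it is continuous for the subspace topology (composition with the inclusions $Z_{\Upsilon} \hookrightarrow Z_{\Omega} \hookrightarrow X$ recovers the original $f$). Applying the definition of $Z_{\Upsilon+1}$ yields $\theta_X(f) \in Z_{\Upsilon + 1} \subseteq Z_{\Omega}$, as required.

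The main obstacle is a minor topological bookkeeping issue: the recursive step uses $\theta_X$ restricted to continuous maps out of $\chf$ into a subset $Z_{\Upsilon}$ of $X$, and one should check that ``continuous'' means the same thing whether we equip $Z_{\Upsilon}$ with the subspace topology inherited from $X$ or pass through the Kellification $\vk$ to land in $\sW$. This is resolved by the adjunction $(\vi \dashv \vk)$: the underlying set of continuous maps $\chf \to \vk(Z_{\Upsilon})$ coincides with $\sT(\chf, Z_{\Upsilon})$ used in the construction of $Z_{\Upsilon+1}$, and subspace topology is transitive along the chain $Z_{\Upsilon} \subseteq Z_{\Omega} \subseteq X$. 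With this clarification the transfinite argument goes through cleanly.
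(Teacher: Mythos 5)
Your proof is correct and follows essentially the same route as the paper: the inclusion $Z_{\Omega}\subseteq Z^{\chf}$ is dismissed as routine, and the substantive inclusion is obtained by showing $Z_{\Omega}$ is a subcontraspace, corestricting any continuous $f:\chf\to Z_{\Omega}$ to some $Z_{\Upsilon}$ with $\Upsilon<\Omega$ via the $|\chf|$-filteredness of $\Omega$. You merely spell out two points the paper leaves implicit (the cardinality bookkeeping behind the corestriction, which tacitly also uses that the chain $Z_{\Upsilon}$ is increasing by contraunitality, and the Kellification of subspaces), so this is the same argument in more detail.
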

\begin{proof}
  The inclusion $Z^{\chf} \supseteq Z_{\Omega}$ is obvious.

To prove the opposite inclusion, we need to show that $Z_{\Omega}$ is a subcontramodule.   
A continuous function $f:{\chf}\rightarrow Z_\Omega$ corestricts
to a function $f|^{ Z_{\Upsilon}} :{\chf}\rightarrow Z_{\Upsilon}$ for some $\Upsilon <\Omega$ because $\Omega$ is $|{\chf}|$-filtered.
Thus, $\theta_X(f) = \theta_X (f|^{ Z_{\Upsilon}}) \in Z_{\Upsilon +1}\subseteq Z_{\Omega}$.
\end{proof}

While ${\chf}$ is not presentable in general (Lemma~\ref{B_presentable}), the proof of Proposition~\ref{ZB_description} uses the fact that $[{\chf},-]_{\sW}$ commutes with special colimits (cf. \cite[Lemma 2.4.1]{Hov}). This can be sharpened to prove the following theorem.

\begin{thm} \label{TB_cocomplete}
  The category $\sW^{F}$ is cocomplete.
  \end{thm}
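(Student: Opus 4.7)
The plan is to deduce cocompleteness from completeness (established in Section~6.3) by constructing reflexive coequalisers in $\sW^F$, and then invoking the classical reduction for monadic categories: since $\sW$ is cocomplete, $\sW^F$ is cocomplete as soon as it admits coequalisers of reflexive pairs, because every $F$-algebra $(W, \theta_W)$ has the canonical presentation
\[ FFW \underset{\mu_W}{\overset{F\theta_W}{\rightrightarrows}} FW \xrightarrow{\theta_W} W \]
as a reflexive coequaliser of free algebras (with common section $F\eta_W$), and an arbitrary diagram in $\sW^F$ reduces via levelwise resolution to reflexive coequalisers of free algebras, whose colimits are inherited from $\sW$ through the left adjoint $F$.

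For the existence of reflexive coequalisers, given a reflexive pair $f, g \colon (X, \theta_X) \rightrightarrows (Y, \theta_Y)$ with common section $s$, I would form the underlying coequaliser $q \colon Y \to Q_0$ in $\sW$. The essential difficulty is that $Q_0$ does not inherit a contraspace structure from $Y$ directly. To manufacture one, embed $Q_0$ into a suitable ambient contraspace (for instance, into the free contraspace $F(Q_0)$ via the unit $\eta_{Q_0}$) and apply the subcontraspace generator of Proposition~\ref{ZB_description}: take the smallest contraspace $Q$ in this ambient containing the image of $Q_0$ and through which the compatibility relations dictated by $\theta_Y$, $\theta_X$, and the pair $(f,g)$ are forced. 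By Proposition~\ref{ZB_description}, this transfinite closure terminates at a $|\chf|$-filtered ordinal, so $Q$ is a legitimate object of $\sW^F$. Reflexivity of the pair, supplied by the common section $s$, is precisely what reconciles potentially conflicting candidate values of the structure map at successor stages arising from non-unique lifts of $\chf$-parametrised families through $q$.

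The main obstacle lies in the combined set-theoretic, topological, and algebraic control required during the transfinite construction. One must ensure simultaneously that (i) the iteration terminates within $\sW$, (ii) the subspace topology inherited at each stage makes the candidate structure map continuous, and (iii) the resulting contraspace satisfies the universal property of a coequaliser in $\sW^F$. Requirement (i) is addressed by Proposition~\ref{ZB_description}; (ii) rests on continuity of $\mu_{Q_0}$ and the closedness properties of weakly Hausdorff subspaces from Proposition~\ref{closed2}; (iii) is forced stage-by-stage by the defining minimality of the subcontraspace generator. A subtle point, worth highlighting in the final write-up, is the choice of ambient contraspace: $F(Q_0)$ is convenient but crude, and a finer ambient engineered from $(Y,\theta_Y)$ itself may be required to avoid extraneous structure and to make the universal property transparent.
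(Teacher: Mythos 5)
Your reduction of cocompleteness to reflexive coequalisers (Linton's theorem, which is Lemma~\ref{_cocomplete}(2) of the paper) is legitimate, but the construction of the reflexive coequaliser itself does not work as described, and this is precisely the hard part. The composite $\eta_{Q_0}\circ q : Y \to F(Q_0)$ is \emph{not} a morphism of contraspaces: the contramodule condition would require $\eta_{Q_0}\circ q\circ\theta_Y = \mu_{Q_0}\circ F(\eta_{Q_0}\circ q)$, and the right-hand side equals $Fq$ by the monad identity $\mu\circ F\eta = \Id$, which does not agree with the left-hand side (one factors through $Y$, the other does not). Consequently the subcontraspace of $F(Q_0)$ generated by the image of $Q_0$ receives no coequalising arrow from $(Y,\theta_Y)$ in $\sW^F$ at all, so it cannot be the coequaliser, and no amount of transfinite closure inside that ambient repairs this. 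The appeal to reflexivity to ``reconcile conflicting candidate values of the structure map'' is not a construction: the obstruction is that $F$ does not preserve the coequaliser $q$ (indeed $F=[\chf,-]$ is not accessible unless $\chf$ is discrete, by Lemma~\ref{B_presentable}), so there is no induced structure map $F(Q_0)\to Q_0$, and your proposal never produces one.

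The paper takes a genuinely different and more robust route that sidesteps coequalisers entirely. It shows that any cocone $\Psi$ under a small diagram $S$ factors through the subcontraspace $(\Psi^\sharp(V))^{\chf}$ generated by the image of the mediating map out of the colimit $V$ of $S$ in $\sW$; Proposition~\ref{ZB_description} bounds the cardinality of this subcontraspace by a cardinal $\Omega$ depending only on $|\chf|$ and $|V|$, not on the target of the cocone. The category of cocones with target of cardinality below $\Omega$ therefore has a small skeleton, and the colimit of $S$ is obtained as the \emph{limit} of the corresponding small diagram in the complete category $\sW^F$. This is a solution-set argument in the style of the adjoint functor theorem; your subcontraspace-generation idea is the right ingredient, but it is used there only to obtain the cardinality bound, not to build a quotient. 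If you want to salvage your outline you would have to replace the ambient $F(Q_0)$ by the product of all (cardinality-bounded) cocone targets and take the subcontraspace generated by the image of $Y$ therein --- at which point you have reproduced the paper's argument.
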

\begin{proof}
  Let $S:\sD\rightarrow \sW^{F}$ be a small diagram, $V$ its colimit in $\sW$.
  Hence, given a cocone $\Psi_X: SX\rightarrow Y$, $X\in D$ in $\sW^{F}$,
  we have a unique mediating morphism $\Psi^{\sharp}: V\rightarrow Y$ in $\sW$.

Clearly, the cocone factors through the subcontramodule, generated by the image of $\Psi^{\sharp}$:
  $$
  \Psi_X: SX\xrightarrow{\Phi_X} (\Psi^{\sharp} (V))^{\chf} \hookrightarrow Y \, .
  $$
The explicit construction in Proposition~\ref{ZB_description} gives an upper bound $\Omega$ on the cardinality of $(\Psi^{\sharp} (V))^{\chf}$. It depends on $|{\chf}|$ and $|V|$ but does not depend on $|Y|$.

Let us consider a category $\sD^{\ast}$, whose objects are cocones $\Psi_X: SX\rightarrow Y$ in $\sW^{\chf}$ with $|Y|<\Omega$.
The morphisms from $\Psi_X: SX\rightarrow Y$ to $\Phi_X: SX\rightarrow Z$ are such morphisms $f\in \sW^{\chf}(Y,Z)$ that $f\Psi_X=\Phi_X$ for all $X\in D$.
Since the cardinalities of the cocone targets in $\sD^{\ast}$ are bounded above,  the skeleton $\sD^{\ast}_0$ of $\sD^{\ast}$ is a small category.
Then
  $$
  S^{\ast}: \sD^{\ast}_0\rightarrow \sW^{\chf}, \ \ (\Psi_X: SX\rightarrow Y) \mapsto Y
  $$
  is a small diagram, whose limit ${\varprojlim}S^{\ast}$ is the colimit ${\varinjlim}S$.
      \end{proof}

We finish this section by right-inducing the Quillen model structure to $\sW^{F}$. 
It does not follow from Proposition~\ref{ind_model} because $\sW$ is not accessible.

\begin{prop}
There exists a Quillen right-induced model structure on ${\sW}^{F}$, defined by equations~\eqref{ind_cl}.
This structure is right proper.
\end{prop}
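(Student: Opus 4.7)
My plan is to apply the path-object variant of Kan's transfer theorem to the free-forgetful adjunction
$$F^{\mathrm{fr}} \dashv G^F \colon \sW^F \to \sW, \qquad F^{\mathrm{fr}}(X) = (FX,\mu_X).$$
Since $\sW$ is cofibrantly generated with generating sets $\bI$ and $\bJ$ from~\eqref{gencof} and~\eqref{gentrivcof}, and $\sW^F$ is complete and cocomplete by Lemma~\ref{complete_cocomplete} and Theorem~\ref{TB_cocomplete}, the two non-formal hypotheses to verify are: (a) the domains of $F^{\mathrm{fr}}(\bI)$ and $F^{\mathrm{fr}}(\bJ)$ are small relative to the corresponding cell complexes in $\sW^F$; and (b) the functor $G^F$ takes relative $F^{\mathrm{fr}}(\bJ)$-cell complexes to weak equivalences in $\sW$.

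For (b), I will construct a natural path object in $\sW^F$. Writing $I$ for the unit interval $[0,1]$, the cartesian-closedness of $\sW$ gives a natural homeomorphism
$$F(Y^I) \;=\; [\chf,[I,Y]_\sW]_\sW \;\cong\; [I,[\chf,Y]_\sW]_\sW \;=\; [I,FY]_\sW,$$
so the map $[\Id_I,\theta_Y] \colon F(Y^I)\to Y^I$ equips $Y^I$ with a contramodule structure, functorially in $Y$. A direct check shows that the standard factorisation $Y \to Y^I \to Y\times Y$ of the diagonal is a factorisation in $\sW^F$, and $G^F$ carries it to the usual Quillen path object in $\sW$. Because every object of $\sW^F$ is fibrant (its image under $G^F$ maps to a point via a Serre fibration), the path-object argument of Quillen (cf.\ the presentations by Berger--Moerdijk and Schwede--Shipley) now yields~(b).

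The main obstacle will be (a). Unlike in the accessible setting of Proposition~\ref{ind_model}, colimits in $\sW^F$ are not inherited from $\sW$, so smallness of $F(S^{n-1})$, $F(D^n)$, $F(D^n\times\{0\})$ and $F(D^n\times I)$ in $\sW^F$ is not automatic. I plan to use the explicit description of the subcontramodule generated by a subset from Proposition~\ref{ZB_description}: a pushout along a coproduct of maps from $F^{\mathrm{fr}}(\bI)$ in $\sW^F$ is realised from the naive topological pushout in $\sW$ by iterated application of $\theta$, and the iteration stabilises after a $|\chf|$-filtered ordinal. Since discs and spheres are $\omega$-small with respect to closed $T_1$ inclusions in $\sW$, this yields the required smallness bounds relative to $F^{\mathrm{fr}}(\bI)$- and $F^{\mathrm{fr}}(\bJ)$-cell complexes. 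Kan's transfer theorem then delivers the right-induced model structure~\eqref{ind_cl}.

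For right properness, observe that $G^F$, being a right adjoint, preserves pullbacks, while both fibrations and weak equivalences in $\sW^F$ are detected by $G^F$. Since the Quillen model structure on $\sW$ is right proper, the pullback of a weak equivalence in $\sW^F$ along a fibration is again a weak equivalence.
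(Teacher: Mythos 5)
Your overall strategy coincides with the paper's: both transfer the cofibrantly generated Quillen structure on $\sW$ along the free--forgetful adjunction to $\sW^F$ via Kan's theorem (the paper cites \cite[Th.~11.3.2]{Hir}), and your right-properness argument is word-for-word the paper's. The differences are in how the two hypotheses of the transfer theorem are verified. For the acyclicity condition the paper argues directly that each generating map $[\chf, D^n\times\{0\}]\to[\chf, D^n\times[0,1]]$ admits a deformation retract, so that relative $F(\bJ)$-complexes are weak equivalences of underlying spaces; you instead run Quillen's path-object argument, using that every contraspace is fibrant (the map to the point is a Serre fibration) and that $Y^I$ inherits a natural contramodule structure from the homeomorphism $F(Y^I)\cong[I,FY]_{\sW}$. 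Both routes are standard and correct; yours has the genuine advantage of not requiring any knowledge of what relative $F(\bJ)$-complexes look like as topological spaces, which matters here precisely because colimits in $\sW^F$ are not created by $G^F$. For the smallness condition the paper simply asserts that relative $F(\bI)$- and $F(\bJ)$-complexes are topological inclusions and quotes \cite[Lemma~2.4.1]{Hov}; you correctly identify this as the delicate point. Your proposed remedy via Proposition~\ref{ZB_description} is, however, only a sketch and not literally correct as stated: a pushout in $\sW^F$ along a map of free contramodules is not obtained as the subcontramodule generated by the naive topological pushout inside some ambient contraspace (there is no such ambient object a priori), so ``iterated application of $\theta$'' would have to be replaced by the usual transfinite construction of pushouts along free maps in categories of algebras over a monad. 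That said, the printed proof elides the same issue entirely, so your version is no less rigorous and is more candid about where the work lies.
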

\begin{proof}
Since the Quillen model structure on $\sW$ is cofibrantly generated, a right induced model structure on $\sW^{F}$ exists if (cf. \cite[Th. 11.3.2]{Hir})
\begin{enumerate}
\item $F(\bI)$ and $F (\bJ)$ permit the small object argument
\item and the forgetful functor $G^F$ takes relative $F(\bJ)$-complexes to weak equivalences,
\end{enumerate}
where $\bI$ and $\bJ$ are the sets of generating cofibrations and generating trivial cofibrations as defined in (\ref{gencof}) and (\ref{gentrivcof}) respectively.
The second statement is obvious because the inclusions in
$$F(\bJ) =\{ [ \chf, D^{n} \x \{0\}] \to [\chf , D^{n} \x [0,1]\, ] \ |\ n \geq 0\}$$
admit deformation retracts. Hence, relative $F(\bJ)$-complexes are weak equivalences topologically.

The first statement holds because relative $F(\bI)$-complexes and relative $F(\bJ)$-complexes are topological inclusions and every topological space is small relative to the inclusions \cite[Lemma 2.4.1]{Hov}. 

The model structure described above is cofibrantly generated \cite[Th. 11.3.2]{Hir}. Since the model structure on $\sW$ is right proper, then so is the right-induced model structure on $\sW^{F}$. 
\end{proof}

\subsection{Topological comodule-contramodule correspondence}
\label{top_CCC}
Since $\sW$ is cartesian closed, the  pair $({L}\dashv R)$ is a Quillen adjunction by Proposition~\ref{Quillen_adjunction_cartesian_cat}. 
An analogue of Theorem~\ref{Quillen_equivalence_cartesian_cat} encounters set-theoretic difficulties. We can sweep them under the carpet and have the following result with an identical proof. 
\begin{prop}
  \label{top_CCC_Grothendieck}
Suppose that all topological spaces are subsets of a Grothendieck universe.  
Then there exist a right Bousfield localisation $\RB_{\bA}(\sW^{F})$ and a left Bousfield localisation $\LB_{\bB}(\sW_{T})$, where the sets $\bA$ and $\bB$ are defined similarly to classes in \eqref{localisation_classes}, 
so that the co-contra correspondence $(L\dashv R)$ induces a Quillen equivalence between the localisations. 
\end{prop}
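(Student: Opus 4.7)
The plan is to adapt the proof of Theorem~\ref{Quillen_equivalence_cartesian_cat} to the topological setting, using the Grothendieck universe hypothesis as a substitute for local presentability of $\sW$. The prerequisites are already in place: $\sW$ is cartesian closed, $\sW_{T}$ is cofibrantly generated and proper by Proposition~\ref{ind_model_str}, $\sW^{F}$ is cofibrantly generated and right proper by the final proposition of Section~\ref{cospaces}'s successor on contraspaces, and $(L \dashv R)$ is a Quillen adjunction by Proposition~\ref{Quillen_adjunction_cartesian_cat}.

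First I would define $\bA \subset \mathrm{Mor}(\sW^{F})$ and $\bB \subset \mathrm{Mor}(\sW_{T})$ by the very same factorisation recipe as in \eqref{localisation_classes}, with the categories $\sC^{F},\sC_{T}$ replaced by $\sW^{F},\sW_{T}$. As explained around \eqref{Quillen_equivalence_maps}, this recipe is engineered so that whenever $\RB_{\bA}(\sW^{F})$ and $\LB_{\bB}(\sW_{T})$ exist, the derived unit $\eta_{X}$ is a weak equivalence for every cofibrant $X\in\sW^{F}$ and the derived counit $\epsilon_{Z}$ is a weak equivalence for every fibrant $Z\in\sW_{T}$. Consequently, the Quillen equivalence is built into the construction, and the only genuine content to verify is existence of the two Bousfield localisations.

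The main obstacle is precisely this existence question, since $\sW$ is not locally presentable and hence neither $\sW_{T}$ nor $\sW^{F}$ is a combinatorial model category. The Grothendieck universe assumption is what resolves it: once a universe $\mathcal{U}$ is fixed containing every topological space under consideration, cardinalities are bounded and the categories in play become essentially small. One may then replace the a priori large classes $\bA$ and $\bB$ by equivalent small subclasses $\bA^{\flat}$ and $\bB^{\flat}$, obtained by running \eqref{localisation_classes} only on a small skeleton of $\mathcal{U}$-small cofibrant (resp.\ fibrant) replacements, in direct analogy with the step $\bA \rightsquigarrow \bA^{\flat}$, $\bB \rightsquigarrow \bB^{\flat}$ in Theorem~\ref{Quillen_equivalence_cartesian_cat}.

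With $\bA^{\flat}$ and $\bB^{\flat}$ genuine sets of morphisms, Hirschhorn's existence theorems for left and right Bousfield localisation of cofibrantly generated proper model categories \cite{Hir} produce $\RB_{\bA^{\flat}}(\sW^{F})$ and $\LB_{\bB^{\flat}}(\sW_{T})$. The identifications $\RB_{\bA}(\sW^{F}) = \RB_{\bA^{\flat}}(\sW^{F})$ and $\LB_{\bB}(\sW_{T}) = \LB_{\bB^{\flat}}(\sW_{T})$ follow as in the final paragraph of the proof of Theorem~\ref{Quillen_equivalence_cartesian_cat}, since the extra relations in $\bA \setminus \bA^{\flat}$ and $\bB \setminus \bB^{\flat}$ are already weak equivalences after inverting the generating subsets. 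The resulting adjunction $L : \LB_{\bB}(\sW_{T}) \rightleftarrows \RB_{\bA}(\sW^{F}) : R$ is Quillen by construction of the localisations, and Quillen equivalent by the first paragraph.
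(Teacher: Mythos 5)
Your proposal is correct and follows essentially the same route as the paper, which itself states only that the result holds ``with an identical proof'' to Theorem~\ref{Quillen_equivalence_cartesian_cat}, the Grothendieck universe hypothesis serving to replace local presentability in reducing the localisation classes to sets. If anything, you spell out the set-theoretic reduction in more detail than the paper does; the one point to be slightly careful about (which the paper is equally brief on) is that Hirschhorn's existence theorems require cellularity, not merely cofibrant generation, of the model structures being localised.
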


Let $\chf=S^2$ be the 2-sphere. As a cospace, consider the 3-sphere $S^3$ with the Hopf fibration $\phi : S^3 \rightarrow S^2$. The cospace $(S^3,\phi)$ is fibrant, yet $R S^3=\emptyset$. This shows that $(L\dashv R)$ in Proposition~\ref{top_CCC_Grothendieck} is not a Quillen equivalence between $\sW^{F}$ and $\sW_{T}$. This example suggests some ``local'' version of the functor $R$ (using local sections as in the sheaf of sections) may still be an equivalence. 

Another instructive example is the 1-sphere $\chf=S^1$ and the figure-8 cospace $(X=S^1\vee S^1, \phi_X = \Cnt\vee\Id_{S^1})$. Clearly, $R X=\{\Id\}$ is the one-element set and $L R X=\chf$. Taking local sections does not help: local sections near the singular point are not going to see the collapsing loop in $X$. On the other hand, the collapsing loop will be ``seen'' by the local sections of the fibrant replacement $X_{\Fib}$. These phenomena deserve further investigation.

%

%

\subsection{Relation to simplicial sets}
Most of the current chapter equally applies to the category $\sK$ of compactly generated spaces, not only to $\sW$. An advantage of $\sK$ is its direct relation to the category of simplicial sets:
there is a Quillen equivalence between simplicial sets and topological spaces \cite[Th. 3.6.7]{Hov}
\begin{equation} \label{geom_simp}
(|-| \dashv \ssS )\, , \ \SC : \sS \rightleftarrows \sK : |-| \, 
\end{equation}
where $|Q_\bullet |$ is the geometric realisation of a simplicial set $Q_\bullet$ and $\SC(Y)_n = \sK (\Delta [n] , Y)$ is the singular complex of a topological space $Y$. 
Let $\chf_\bullet = (\chf_n)\in\sS$, $\chf=|\chf_\bullet|\in\sW$, $\widehat{\chf}_\bullet = \SC(\chf)\in\sS$, considered as comonoids in their categories. We denote the corresponding comonad-monad adjoint pairs by $(T\dashv F)$, $(T\dashv F)$ and $(\widehat{T}\dashv \widehat{F})$.

In light of the isomorphism of categories~\eqref{slice_isomorphic}, we identify the overcategories with the comodule categories. The functors~\eqref{geom_simp} and the induction (Proposition~\ref{right_adjoint_slice}) give rise to the following functors:
  $$
|-| : \sS_{T} \rightarrow \sK_{T}, \
\SC : \sK_{T} \rightarrow \sS_{\widehat{T}}, \ 
\sInd\!\!\downarrow : \sS_{\widehat{T}} \rightarrow \sS_{T} .$$
Similarly, we can use the functors~\eqref{geom_simp}.
The induction functor from Proposition~\ref{right_adjoint_F} can be applied  levelwise to some but not all simplicial contrasets (see Section~\ref{SSets_1}). We expect that the induction exists in general. These considerations yield the functors between the contramodule categories: 
$$
|-| : \sS^F \rightarrow \sK^F, \
\SC : \sK^F \rightarrow \sS^{\widehat{F}}, \
\sInd_{\bullet}^F  : \sS^{\widehat{F}} \rightarrow \sS^F.
$$  
We can package all these functors in the following conjectural worldview of the relation between the topological and the simplicial comodule-contramodule correspondences:
\begin{conj}\label{topological_simplicial_CCC_conj}
For any simplicial set $\chf$ there exists a commutative (in an appropriate sense) square of categories and Quillen adjunctions 
\begin{equation}
  \label{categor_square}
\begin{tikzcd}[row sep=2.5em]
  \sS^F  \arrow[d,shift right,swap,"|-|"]  \arrow[rr,shift left, dashrightarrow, "L"] 
     &
        &  \sS_T\arrow[ll,shift left, dashrightarrow,"R"] \arrow[d,shift right,swap,"|-|"] \\
  \sK^F  \arrow[u,shift right,swap,"\sInd_{\bullet}^F \circ{\SC}"]  \arrow[rr,shift left, dashrightarrow, "L"]
   & 
      &  \sK_T\arrow[ll,shift left, dashrightarrow, "R"] \arrow[u,shift right,swap,"\sInd\!\downarrow\circ{\SC}"]
\end{tikzcd}
\end{equation}
where the left adjoint functors are either on top or on the left
and the vertical solid arrows are Quillen equivalences.
\end{conj}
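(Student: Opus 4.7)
The plan is to prove the two sides of the square separately, then verify commutativity, and finally upgrade the vertical arrows to Quillen equivalences, exploiting throughout the classical Quillen equivalence $(|-|\dashv SC)$ between $\sS$ and $\sK$ together with the change-of-comonoid machinery from Section~\ref{s1.Coc}. For the right column (cospaces), recall that $\sS_T\cong(\sS\!\downarrow\!\chf_\bullet)$ and $\sK_T\cong(\sK\!\downarrow\!\chf)$ by~\eqref{slice_isomorphic}. Since $|-|$ preserves finite products, the assignment $(X_\bullet,\phi_\bullet)\mapsto(|X_\bullet|,|\phi_\bullet|)$ defines a left adjoint $\sS_T\to\sK_T$, whose right adjoint $\sInd\!\!\downarrow\circ\,SC$ is the composite supplied by Proposition~\ref{right_adjoint_slice} applied to the unit map $\chf_\bullet\to SC(|\chf_\bullet|)=\widehat{\chf}_\bullet$. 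Because the Quillen model structures on both slice categories are the ones induced by $G_T$ (Proposition~\ref{ind_model_str}), and because cofibrations, fibrations and weak equivalences are detected on the underlying object, the classical Quillen equivalence lifts to a Quillen equivalence on the right column by a standard slice-category argument.

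For the square's commutativity, I would compare left adjoints: the top-then-right route sends $Y_\bullet\in\sS^F$ to $|L(Y_\bullet)|$, while the left-then-bottom route sends it to $L(|Y_\bullet|)$. Since $L$ is defined as a coequaliser of $\delta_Y$ and $\beta_Y$ in~\eqref{alpha_beta} and $|-|$ is a left adjoint (hence preserves coequalisers and the monoidal product $-\times\chf$), these agree canonically. A dual argument on right adjoints handles the $R$-square, giving commutativity up to natural isomorphism in the appropriate sense.

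The left column is the technical heart and the main obstacle. The functor $SC:\sK^F\to\sS^{\widehat{F}}$ lands in contramodules over $\widehat{\chf}_\bullet=SC(|\chf_\bullet|)$ rather than over $\chf_\bullet$, and one must induce along $\chf_\bullet\to\widehat{\chf}_\bullet$ using a functor $\sInd_\bullet^F$ whose existence Section~\ref{SSets_1} only sketches levelwise. The hard step will therefore be constructing this induction for an arbitrary simplicial set $\chf_\bullet$ and verifying that it is right adjoint to restriction; the obvious levelwise guess fails because the restriction map $\tau_n$ is neither injective nor surjective in general, so one instead expects $\sInd_\bullet^F(Y)_n$ to be cut out as an equaliser encoding compatibility across simplicial face maps. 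Once this is in place, verifying the Quillen adjunction property on the left column and its compatibility with $(L\dashv R)$ on both rows proceeds by the same adjoint-preservation arguments as above. Finally, to upgrade the vertical arrows to Quillen equivalences one should not expect success directly on $\sS^F$ and $\sK^F$ — the pathologies noted at the end of Section~\ref{top_CCC} (Hopf fibration, figure-8) obstruct this — but rather on suitable Bousfield localisations as in Theorem~\ref{Quillen_equivalence_cartesian_cat} and Proposition~\ref{top_CCC_Grothendieck}, where the derived unit and counit become weak equivalences by construction.
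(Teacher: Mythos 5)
First, a point of orientation: the statement you are proving is labelled a \emph{conjecture} in the paper and is left open there; the authors explicitly say only that they ``expect that the induction exists in general.'' So there is no proof in the paper to compare against, and your text should be judged as an attempt to settle an open problem. As such it is a reasonable plan, but it contains genuine gaps rather than a proof. The central one is the same gap the paper acknowledges: you correctly identify that the whole left column hinges on constructing $\sInd_{\bullet}^F:\sS^{\widehat{F}}\to\sS^F$ for an arbitrary simplicial set $\chf_\bullet$ and proving it adjoint to restriction, and then you do not construct it. Saying that one ``expects it to be cut out as an equaliser encoding compatibility across simplicial face maps'' is a restatement of the difficulty, not a resolution; Proposition~\ref{right_adjoint_F} only handles the category of sets, and Section~\ref{SSets_1} shows the levelwise guess fails precisely because $\tau_n$ is neither injective nor surjective. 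Until this functor exists, the left column, and hence the square, is not defined.

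Two further steps would fail as written. Your commutativity argument for the $L$-square relies on $|-|$ preserving coequalisers \emph{and} the diagram being taken to the corresponding diagram downstairs; but $L(Y)$ is the coequaliser of maps out of $TFY=[\chf,Y]\times\chf$, and $|-|$ does not commute with the internal hom: the natural comparison $|[\chf_\bullet,Y]_{\sS}|\to[|\chf_\bullet|,|Y|]_{\sK}$ is not an isomorphism, so $|TFY|\ne TF|Y|$ and the two coequaliser presentations are not identified by $|-|$. (The same issue already makes it nontrivial that $|-|:\sS^F\to\sK^F$ is well defined, since a contramodule structure on $|Y|$ requires a map \emph{out of} $[|\chf_\bullet|,|Y|]_{\sK}$, whereas applying $|-|$ to $\theta_Y$ only gives a map out of $|[\chf_\bullet,Y]_{\sS}|$.) Finally, your last paragraph conflates the horizontal and vertical arrows: the Hopf-fibration and figure-eight examples at the end of Section~\ref{top_CCC} obstruct $(L\dashv R)$ being a Quillen \emph{equivalence}, but the conjecture only asks the dashed horizontal arrows to be Quillen adjunctions; it is the solid vertical realisation/singular-complex adjunctions that are claimed to be Quillen equivalences, and those pathologies say nothing about them. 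Retreating to Bousfield localisations therefore addresses a claim the conjecture does not make, while leaving the actual claim about the vertical arrows (in particular on the contramodule side, where fibrations and weak equivalences are right-induced and the standard slice-category argument does not apply verbatim) unproved.
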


\subsection{Topological fact}
We finish the paper with a useful fact about the topological co-contra correspondence that does not follow from the general framework of model categories. 
\begin{prop} \label{easy_top_fact}
  Suppose $X,Y\in(\sW_T)_{\Fib}$ are CW-complexes.
If  $f\in\WEq_{T}(X,Y)$, then $R f\in\sW^F(FX, FY)$ and $Ff \in \sW([\chf,X],[\chf,Y])$ are homotopy equivalences.
\end{prop}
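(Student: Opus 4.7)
The plan is to exploit the induced model structure on $\sW_T\cong (\sW\!\downarrow\!\chf)$ from Proposition~\ref{ind_model_str}, combined with the standard behaviour of mapping spaces under homotopies in the cartesian closed category $\sW$. Since $X$ and $Y$ are CW-complexes, the maps $\emptyset\to X$ and $\emptyset\to Y$ are cofibrations in the Quillen model structure on $\sW$, and hence also cofibrations in $\sW_T$ because cofibrations in $\sW_T$ are detected by $G_T$. By hypothesis $X$ and $Y$ are also fibrant in $\sW_T$. Therefore $f$ is a weak equivalence between cofibrant-fibrant objects in the model category $\sW_T$, so by general model-category theory it admits a two-sided homotopy inverse $g\in \sW_T(Y,X)$. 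Noting that $-\times I$ equipped with $\phi_{(-)}\circ\pi_1$ is a valid cylinder object in $\sW_T$, this unwinds to the following concrete data: a map $g:Y\to X$ with $\phi_X g = \phi_Y$, and fibrewise homotopies $H_X:X\times I\to X$ and $H_Y:Y\times I\to Y$ (i.e., $\phi_X\circ H_X=\phi_X\circ\pi_1$ and $\phi_Y\circ H_Y=\phi_Y\circ\pi_1$) realising $gf\simeq_\chf \Id_X$ and $fg\simeq_\chf \Id_Y$.

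For the second statement, forgetting the over-$\chf$ data shows that $f$ and $g$ are mutually inverse homotopy equivalences in $\sW$ via $H_X$ and $H_Y$. The contravariant endofunctor $[\chf,-]:\sW\to\sW$ preserves homotopies: if $h:A\times I\to B$ is any homotopy, its adjunct across the isomorphism $[A\times I,B]\cong [I,[A,B]]$ yields a continuous path between the induced maps on function spaces, which in turn assembles into a continuous homotopy $[\chf,A]\times I\to[\chf,B]$ by cartesian closedness. Hence $Ff=[\chf,f]$ and $Fg=[\chf,g]$ are mutually inverse homotopy equivalences in $\sW$.

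For the first statement, recall (cf. Proposition~\ref{closed}) that under $\sW_T\cong(\sW\!\downarrow\!\chf)$ the object $RX$ identifies with the closed subspace of $[\chf,X]$ consisting of sections of $\phi_X$, and $Rf$ acts by post-composition with $f$. The fibrewise homotopy $H_X$ determines a map
\[
\widehat{H}_X:RX\times I\longrightarrow RX,\qquad \widehat{H}_X(s,t)(c)=H_X(s(c),t),
\]
whose target lands in $RX$ since $\phi_X(H_X(s(c),t))=\phi_X(s(c))=c$, and whose continuity follows from cartesian closedness of $\sW$ (it is adjunct to the composition $RX\times I\times \chf\to X\times I\to X$ of the evaluation of sections with $H_X$). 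This is a homotopy from $Rg\circ Rf$ to $\Id_{RX}$; the analogous construction with $H_Y$ yields a homotopy from $Rf\circ Rg$ to $\Id_{RY}$. Hence $Rf$ is a homotopy equivalence.

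The only slightly delicate step is extracting the \emph{fibrewise} homotopy inverse $g$ from the abstract model-category fact that a weak equivalence between cofibrant-fibrant objects is a homotopy equivalence; this requires knowing that $-\times I$ serves as a cylinder in the induced model structure on $\sW_T$, which is immediate from Proposition~\ref{ind_model_str} together with the fact that $X\times I\to X$ is a weak equivalence in $\sW$. Once this is in place, the remaining arguments are routine manipulations of mapping spaces using the cartesian closed structure of $\sW$.
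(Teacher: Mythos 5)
Your proof is correct, but it reaches the key intermediate fact --- that $f$ is a \emph{fibrewise} homotopy equivalence over $\chf$ --- by a genuinely different route from the paper. The paper's proof is classical and two lines long: Whitehead's theorem (using the CW hypothesis) shows $f$ is a homotopy equivalence of underlying spaces, and then Dold's theorem \cite[7.5]{May} (using that $\phi_X$ and $\phi_Y$ are fibrations, i.e.\ fibrancy in $\sW_T$) upgrades a homotopy equivalence over a base between fibrations to a fibre homotopy equivalence; the preservation of fibrewise homotopies by $R$ and $F$ is left to the reader. You instead work entirely inside the induced model structure on $\sW_T\cong(\sW\!\downarrow\!\chf)$ from Proposition~\ref{ind_model_str}: CW-ness gives cofibrancy there (since $\Cof_T=G_T^{-1}(\Cof)$ and the initial object is $\emptyset$ over $\chf$), fibrancy is the hypothesis, and the standard fact that a weak equivalence between cofibrant--fibrant objects is a homotopy equivalence, read through the explicit cylinder $X\times I$ over $\chf$, produces exactly the fibrewise homotopy inverse. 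Both uses of the hypotheses line up (CW $\leftrightarrow$ Whitehead resp.\ cofibrancy; fibrant $\leftrightarrow$ Dold resp.\ the cofibrant--fibrant lemma), so the arguments are parallel in substance; yours is more self-contained given the model-categorical machinery already set up in the paper and makes the final preservation step explicit via $\widehat{H}_X$, while the paper's is shorter because it outsources the hard step to a classical reference. Two small points to fix: $[\chf,-]$ is a \emph{covariant} endofunctor, not contravariant as you wrote (the argument you give is the covariant one, so nothing breaks); and when you invoke the cylinder you should say explicitly that $X\sqcup X\to X\times I$ is a cofibration in $\sW$ (true since $X$ is a CW-complex, or more generally cofibrant, by the pushout-product axiom), which is what makes $X\times I$ over $\chf$ a genuine cylinder object rather than merely a weak equivalence target.
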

\begin{proof}
  By Whitehead Theorem, $f$ is a homotopy equivalence.
  Moreover, $f$ is a fibrewise homotopy equivalence \cite[7.5]{May}.
The rest of the argument is clear.
\end{proof}

In particular, $R f\in\WEq^F(R X, R Y)$.
We would like to refine Proposition~\ref{easy_top_fact}, replacing the CW-complex condition on $X$ and $Y$ with a condition on $\chf$. 

We need a standard topological lemma, which we could not find in the literature. Let $X$, $Y$ be connected topological spaces in $\sW$, $f\in\sW(X,Y)$. 
If $A\in\sW$ is another topological space, we write $f_A : \sW(A,X) \to \sW(A,Y)$ for the map of function spaces defined by composition with $f$ (cf. Section~\ref{conv_top_spaces}).  Next fix a map $\alpha : A \to X$ that will be a base point for $\sW(A,X)$.  As a base point for $\sW(A,Y)$ we use the map $\beta = f \circ \alpha$ so that $f_A : \sW(A,X) \to \sW(A,Y)$ is a map of pointed spaces.
  
  \begin{lemma} \label{Standard_top_fact}
Suppose that $A$ is a CW-complex of finite type and $f$ is a weak homotopy equivalence.  Then $(f_A)_n : \pi_n(\sW(A,X), \alpha) \to \pi_n(\sW(A,Y), \beta)$ is an isomorphism for all $n \geq 1$.
\end{lemma}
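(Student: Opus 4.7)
The plan is to present this as an immediate consequence of the fact that, in the Quillen model structure on $\sW$, the internal hom out of a cofibrant object preserves weak equivalences in the second argument. Every object of $\sW$ is fibrant (any map to a point is a Serre fibration), and a CW-complex $A$ is cofibrant since it is built by iteratively attaching cells along the generating cofibrations $S^{n-1} \hookrightarrow D^{n}$ of \eqref{gencof}. Moreover, $\sW$ equipped with $\times$ and the Quillen model structure is a monoidal model category, so the SM7 axiom tells us that $[A, -]_{\sW} = \sW(A, -)$ is a right Quillen functor. By Ken Brown's lemma a right Quillen functor sends weak equivalences between fibrant objects to weak equivalences, so in our setting $f_{A} : \sW(A, X) \to \sW(A, Y)$ is itself a weak homotopy equivalence. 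Taking the basepoints $\alpha$ and $\beta = f \circ \alpha$ then yields the desired isomorphism $(f_{A})_{n}$ for every $n \geq 1$.

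The steps I would carry out, in order, are: first, exhibit $A$ as a cofibrant object of $\sW$ by presenting it as a (possibly transfinite) composition of pushouts of coproducts of the generating cofibrations; second, invoke the standard fact that $(\sW, \times)$ with the Quillen model structure is a monoidal model category, so that SM7 holds; third, specialise SM7 to deduce that $[A, -]_{\sW}$ preserves fibrations and trivial fibrations, hence is right Quillen; fourth, apply Ken Brown's lemma, using the fact that every object of $\sW$ is fibrant, to conclude $f_{A} \in \WEq$; and fifth, read off the isomorphism on $\pi_{n}$ from the fact that $f_{A}$ then induces a bijection on $\pi_{0}$ and an isomorphism on $\pi_{n}$ at every choice of basepoint.

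The main obstacle is genuinely bookkeeping rather than mathematics: one must verify the monoidal model category axioms inside the convenient category $\sW$, where products are computed as $\vk$ applied to the Tychonoff product and function spaces use the modified compact-open topology, rather than in plain $\sT$. This is well documented in the literature, and the verification works cleanly because the generating (trivial) cofibrations in \eqref{gencof} and \eqref{gentrivcof} live in the subcategory of compact Hausdorff spaces, where the distinction between the various function-space topologies and between $\sT$, $\sK$, $\sW$ collapses. I note in passing that the finite-type hypothesis on $A$ is not actually used in this argument; the conclusion holds for any CW-complex $A$, so presumably the hypothesis is in place to serve a subsequent result such as Theorem~\ref{last_thm}.
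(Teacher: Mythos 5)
Your argument is correct, and it reaches the conclusion by a genuinely different route from the paper. The paper gives a hands-on three-step proof: first the case $A=S^n$ via the evaluation fibration $\Lambda^n(X)\to X$ with fibre $\Omega^n_x(X)$ and the five lemma; then finite CW-complexes by induction on the number of cells, using the fibration $\sW(B,X)\to\sW(A,X)$ with fibre $\sW(S^{p+1},X)$ arising from the cofibration sequence $A\to B\to S^{p+1}$; and finally complexes of finite type by passing to the inverse limit of the tower of fibrations $\sW(A^{n+1},X)\to\sW(A^{n},X)$, invoking \cite{Hir3}. Your route --- $A$ cofibrant as a cell complex, every object of $\sW$ fibrant, the pushout-product axiom making $-\times A$ left Quillen and hence $[A,-]_{\sW}$ right Quillen, then Ken Brown's lemma --- is shorter and more conceptual, at the price of importing the standard but nontrivial fact that $\sW$ with the cartesian product and the Quillen model structure is a monoidal model category (see \cite{Hov}); your remark that the verification of the pushout-product axiom reduces to the generating (trivial) cofibrations, where all the function-space subtleties of $\sW$ disappear, is exactly the right point to make. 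You are also correct that your argument does not use the finite-type hypothesis: it applies to any CW-complex, indeed to any cofibrant $A$. The paper's proof genuinely uses finite type (finiteness of each skeleton is what lets the cell induction terminate, and reduces the passage to all of $A$ to an inverse limit of a countable tower), and the explicit fibration sequences it constructs are reused in the proof of Theorem~\ref{last_thm}; that, rather than necessity, is presumably why the authors took the longer road.
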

\begin{proof} The first step in the proof is to show that the result is true for the sphere $A = S^n$ where $n \geq 1$.  In this case the space $\sW(S^n, X)$ is usually denoted by $\Lambda^n(X)$.  Choose a base point for $S^n$.  Evaluating maps at the base point gives us a map $\Lambda^n(X) \to X$.  This map is a fibration and the fibre over $x \in X$ is the space $\Omega^n_x(X)$, the $n$-fold iterated based loop space of $X$, with base point $x$.  The map $f$ now gives a map of fibrations: 
$$
\begin{CD}
\Lambda^n(X) @>>>  \Lambda^n(Y)\\
@VVV @VVV \\
X @>>> Y
\end{CD}
$$
The homotopy groups of $\Omega^n_x(X)$ are given by $\pi_{k}(\Omega^n_x(X)) = \pi_{k+n}(X,x)$ for $k \geq 0$ and trivial for $k < 0$.  Under this identification, the map of homotopy groups $\pi_{k}$ induced by the map 
$$
\Omega^n_x(f) : \Omega^n_x(X) \to \Omega^n_{f(x)}(Y)
$$
is just 
$$
f_{k+n} : \pi_{k+n}(X,x) \to \pi_{k+n}(Y, f(x)).
$$
Since $f_*$ is a weak homotopy equivalence, it follows that the map of fibrations $\Lambda^{n}(X) \to \Lambda^{n}(Y)$ defines isomorphisms on the homotopy groups of the fibres.  Since $f$ is a weak homotopy equivalence this map of fibrations defines an isomorphism on the homotopy groups of the base spaces.  A standard five lemma argument shows that it, therefore, gives an isomorphism on the homotopy groups of the total spaces.

The second step is to extend the result to finite CW-complexes by induction on the number of cells. Assume that the map $(f_A)_* : \pi_n(\sW(A,X), \alpha) \to \pi_n(\sW(A,Y), \beta)$ is an isomorphism for $n \geq 1$. Now replace $A$ by $B = A \cup_{\psi} D^{p+1}$ with $\psi \in \sW(S^p , A)$.  This gives a cofibration sequence
$$ 
A \to B \rightarrow S^{p+1}.   
$$
Applying $\sW(-, X)$ and $\sW(-, Y)$ to this cofibration sequence and using the map $f : X \to Y$, leads to the following commutative diagram: 
$$
\begin{CD}
\sW(A,X) @<<< \sW(B,X) \\
@VVV @VVV \\
\sW(A,Y) @<<< \sW(B,Y).
\end{CD}
$$
The horizontal arrows are fibrations. The fibres of the top map are copies of $\sW(S^{p+1}, X)$. The fibres of the bottom one are copies of $\sW(S^{p+1}, Y)$.
By assumption, this map of fibrations induces an isomorphism on the homotopy groups of the base spaces,  and by the first step it induces an isomorphism on the homotopy groups of the fibres.  It follows from the five lemma that it induces isomorphisms on the homotopy groups of the total spaces.

The final step is to extend the result to a  CW-complex of finite type.  Let $A^{n}$ be the $n$-skeleton of $A$, $i_n : A^{n} \to A^{n+1}$ the inclusion.   Then $A$ is the direct limit of the $A^{n}$ and each of the inclusions $i_n$ is a cofibration.  It follows that $\sW (A,X)$ is the inverse limit of the sequence of maps $\sW(A^{n+1}, X) \to \sW(A^{n})$ induced by $i_n$. Since each of the maps $i_n$ is a cofibration, the maps in the inverse system are fibrations.  Now suppose $f : X \to Y$ is a weak equivalence.  We have proved that for each $n$ the map $f_{A^n} : \sW(A^n, X) \to \sW(A^n,Y)$ is a weak homotopy equivalence.  The map $f_A : \sW(A,X) \to \sW(A,Y)$ is the map of inverse limits defined by the sequence $f_{A^n}$. Hence, $f_A$ is also a weak homotopy equivalence \cite[Th. 2.2]{Hir3}.
\end{proof}

Given a topological space $X$ and a point $s\in X$, by $X_s$ we denote the connected component of $X$ that contains $s$. A map $f\in\sW(X,Y)$ yields a map $f_s\in\sW(X_s,Y_{f(s)})$ between components. 
\begin{thm}\label{last_thm}
Let $\chf$ be a CW-complex of finite type.
Suppose that  $(X,\phi),(Y,\psi)\in(\sW_T)_{\Fib}$ are fibrant cospaces and $s\in R X$. 
If $f\in\WEq_T (X,Y)$ is a weak homotopy equivalence, then the map  $R f_s$ is also a weak homotopy equivalence.
\end{thm}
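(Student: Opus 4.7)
The plan is to realise $RX$ and $RY$ as fibres of Serre fibrations of mapping spaces, compare their long exact homotopy sequences via Lemma~\ref{Standard_top_fact}, and conclude by the five-lemma.

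First I would use that $(X,\phi)$ and $(Y,\psi)$ are fibrant in $\sW_T = (\sW\!\downarrow\!\chf)$, so by Proposition~\ref{ind_model_str} the structure maps $\phi$ and $\psi$ are Serre fibrations. Since $\chf$ is a CW-complex, a standard adjunction argument (a lifting problem against $D^n\times\{0\}\hookrightarrow D^n\times[0,1]$ becomes one for $\phi$ against the trivial cofibration $\chf\times D^n\times\{0\}\hookrightarrow\chf\times D^n\times[0,1]$) shows that the induced maps
$$\phi_*:[\chf,X]_\sW\longrightarrow[\chf,\chf]_\sW, \qquad \psi_*:[\chf,Y]_\sW\longrightarrow[\chf,\chf]_\sW$$
are Serre fibrations whose fibres over $\Id_\chf$ are precisely $RX$ and $RY$.

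Next, the cospace morphism condition $\psi\circ f = \phi$ produces a commutative diagram of fibrations
$$
\begin{CD}
RX @>>> [\chf,X]_\sW @>{\phi_*}>> [\chf,\chf]_\sW \\
@V{Rf}VV @V{f_*}VV @VV{=}V \\
RY @>>> [\chf,Y]_\sW @>{\psi_*}>> [\chf,\chf]_\sW
\end{CD}
$$
with basepoints $s\in RX$, $f\circ s\in RY$, and $\Id_\chf\in[\chf,\chf]_\sW$. Since $f$ is a weak equivalence in $\sW$ (by definition of the induced model structure), Lemma~\ref{Standard_top_fact} applied with $A=\chf$ shows that $(f_*)_n:\pi_n([\chf,X]_\sW,s)\to\pi_n([\chf,Y]_\sW,f\circ s)$ is an isomorphism for every $n\geq 1$. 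The five-lemma applied to the long exact homotopy sequences of the two fibrations then yields that $(Rf)_n:\pi_n(RX,s)\to\pi_n(RY,f\circ s)$ is an isomorphism for all $n\geq 1$. Restricting to the components of $s$ and $f\circ s$, the induced map $Rf_s$ is a map of connected spaces inducing isomorphisms on all higher homotopy groups, hence a weak homotopy equivalence.

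The hard part is essentially bookkeeping; the sole subtlety is that Lemma~\ref{Standard_top_fact} is stated for connected $X$ and $Y$, whereas our cospaces need not be connected. However, the proof of that lemma only uses the weak-equivalence hypothesis on $f$ at the chosen basepoint and its image, so the connectedness assumption is inessential and the conclusion carries over verbatim.
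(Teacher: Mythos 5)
Your proposal is correct and follows essentially the same route as the paper's proof: realise $RX$ and $RY$ as the fibres over $\Id_\chf$ of the fibrations $\phi_\ast,\psi_\ast : [\chf,-]_\sW \to [\chf,\chf]_\sW$, apply Lemma~\ref{Standard_top_fact} with $A=\chf$ to the map of total spaces, note the map of base spaces is the identity, and conclude by the five lemma on the long exact homotopy sequences. Your extra remarks (the adjunction argument showing $\phi_\ast$ is a Serre fibration, and the observation that the connectedness hypothesis in Lemma~\ref{Standard_top_fact} is only used at the chosen basepoints) are correct clarifications of steps the paper leaves implicit.
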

\begin{proof}
Consider a part of the commutative diagram (~\ref{cartesian_diagram}):
\begin{center}
  \begin{tikzcd}
R X_s \arrow[d, "R f_s"] \arrow[r, "i"]  & {[\chf ,X]_s=FX_s} \arrow[d, "Ff_s"]\arrow[r, "\phi_{\ast}"]  & {[\chf , \chf]_{\Id}} \arrow[d, "\Id_{[\chf , \chf]}"]  \\
R Y_{fs}\arrow[r, "j"]& {[\chf , Y]_{fs}=FY_{fs}} \arrow[r,"\psi_{\ast}" ] & {[\chf , \chf]_{\Id}}
\end{tikzcd}
\end{center}
Since both $\phi$ and $\psi$ are fibrations, both $\phi_{\ast}=[\Id_{\chf} , \phi]$ and $\psi_{\ast}$ are also fibrations. Moreover, $R X_s$ is the fibre of $\phi_{\ast}$ over the identity and $R Y_{fs}$ is the fibre of $\psi_{\ast}$ over the identity.
All the spaces in the diagram have chosen base points.
This yields a map from the homotopy exact sequence of $\phi_{\ast}$ to the homotopy exact sequence of $\psi_{\ast}$.

The map of the base spaces is the identity: it induces the identity of homotopy groups.
By Lemma~\ref{Standard_top_fact}, the map of total spaces induces an isomorphism of homotopy groups.  The five lemma tells us that it induces an isomorphism on the homotopy groups of the fibres.
\end{proof}

If one shows $\pi_0(R f)$ is an isomorphism, then Theorem~\ref{last_thm} ensures that $R f$ is a weak homotopy equivalence. Such a proof would involve Topological Obstruction Theory and may require additional assumptions on $\chf$.  

Theorem~\ref{last_thm} is an indication that the co-contra correspondence is full of topological mysteries, waiting to be uncovered.

\

{\bfseries On behalf of all authors, the corresponding author states that there is no conflict of interest.}

\end{document}